\documentclass[12pt]{article}
\usepackage[utf8]{inputenc}
\usepackage[pdftex]{graphicx} 
\usepackage{amsmath}   
\usepackage{amssymb}   
\usepackage{amsthm}
\usepackage{amsfonts}
\usepackage{geometry} 
\geometry{a4paper}
\usepackage{booktabs}
\usepackage{enumerate}
\usepackage{subfig}
\usepackage{url}
\usepackage{fancyhdr}
\usepackage{layout}
\usepackage{pinlabel}
\usepackage[normalem]{ulem}

\usepackage[pdftex]{color}
\usepackage{xcolor}


\usepackage[parfill]{parskip}
\textwidth 16cm
\oddsidemargin 0cm
\evensidemargin 0cm

\makeatletter
\def\thm@space@setup{%
  \thm@preskip=\parskip \thm@postskip=0pt
}
\makeatother

\numberwithin{equation}{section}
\newtheorem{de}{Definition}[section]
\newtheorem{theo}[de]{Theorem}

\newtheorem{lemm}[de]{Lemma}
\newtheorem{co}[de]{Corollary}
\newtheorem{theol}{Theorem}

\newtheorem{re}[de]{Remark}

\theoremstyle{remark}

\theoremstyle{plain}
\newtheorem*{no}{Notation}


\newcommand{\vungoc}{V\~u Ng\d{o}c }
\newcommand{\nff}{{n_{\text{FF}}}}


\newcommand{\dee}{\mathrm{d}}



\newcommand{\N}{\mathbb{N}}

\newcommand{\R}{\mathbb{R}}
\newcommand{\mbS}{\mathbb{S}}
\newcommand{\T}{\mathbb{T}} 
\newcommand{\Z}{\mathbb{Z}}





\newcommand{\eps}{\varepsilon}
\newcommand{\ze}{\zeta}

\newcommand{\ka}{\kappa}
\newcommand{\lam}{\lambda}

\newcommand{\om}{\omega}


\newcommand{\Ga}{\Gamma}
\newcommand{\De}{\Delta}

\newcommand{\Lam}{\Lambda}





\newcommand{\zeti}{{\tilde{\ze}}}
















\newcommand{\That}{{\hat{T}}}

\newcommand{\What}{{\hat{W}}}



\newcommand{\mcA}{\mathcal A}

\newcommand{\mcF}{\mathcal F}
\newcommand{\mcG}{\mathcal G}

\newcommand{\mcJ}{\mathcal J}

\newcommand{\mcN}{\mathcal N}

\newcommand{\mcT}{\mathcal T}

\newcommand{\mcW}{\mathcal W}
\newcommand{\mcX}{\mathcal X}





\newcommand{\mfB}{\mathfrak B}

\newcommand{\mfH}{\mathfrak H}
\newcommand{\mfI}{\mathfrak I}
\newcommand{\mfJ}{\mathfrak J}

\newcommand{\mfL}{\mathfrak L}

\newcommand{\mfP}{\mathfrak P}

\newcommand{\mfT}{\mathfrak T}

\newcommand{\mfW}{\mathfrak W}


\newcommand{\mfh}{\mathfrak h}

\newcommand{\mfj}{\mathfrak j}
\newcommand{\mfk}{\mathfrak k}
\newcommand{\mfl}{\mathfrak l}

\newcommand{\mfn}{\mathfrak n}

\newcommand{\mfp}{\mathfrak p}

\newcommand{\mfw}{\mathfrak w}








\title{\textbf{Taylor series and twisting-index invariants of coupled spin-oscillators}}
\author{Jaume Alonso \and Holger R.\ Dullin \and Sonja Hohloch}

\begin{document}

\maketitle

\begin{abstract}
\noindent About six years ago, semitoric systems on 4-dimensional manifolds were classified by Pelayo \&  V\~u Ng\d{o}c by means of five invariants. A standard example of such a system is the coupled spin-oscillator on $\mbS^2 \times \R^2$. Calculations of three of the five semitoric invariants of this system (namely the number of focus-focus singularities, the generalised semitoric polygon, and the height invariant) already appeared in the literature, but the so-called twisting index was not yet computed and, of the so-called Taylor series invariant, only the linear terms were known.\\

\noindent In the present paper, we complete the list of invariants for the coupled spin-oscillator by calculating higher order terms of the Taylor series invariant and by computing the twisting index. Moreover, we prove that the Taylor series invariant has certain symmetry properties  that make the even powers in one of the variables vanish and allow us to show superintegrability of the coupled spin-oscillator on the zero energy level.
\end{abstract}


\section{Introduction}
Completely integrable Hamiltonian systems have been instrumental in the development of modern dynamical systems. Even though they are very special systems in the class of all Hamiltonian systems, many 
fundamental examples with great physical importance are of this type. Presently we are nowhere close to a global
classification theory of integrable systems. However, such a classification is possible if we add certain additional restrictions, foremost on the type of singularities 
that the systems are allowed to have. A classical case is that 
of toric systems, where only elliptic singularities appear. 

Semitoric systems, which in addition allow for 
so-called focus-focus singularities, have been recently classified in dimension 4 by Pelayo \&  V\~u Ng\d{o}c \cite{PV1}. Together with certain further 
assumptions, a semitoric system is characterised by a list of five invariants (see below), such that two systems are equivalent
when their invariants agree. Computing these invariants for important examples is an ongoing programme and
the best understood example is the spin-oscillator, see Pelayo \&  V\~u Ng\d{o}c \cite{PV3}, which is a special case of the Jaynes-Cumings model (cf.\ Babelon \& Cantini \& Douçot \cite{BCD})
describing the interaction of light and matter.

One of the five invariants of a semitoric system is the Taylor series invariant, which captures the behaviour 
of the system near the separatrix of a focus-focus point. This invariant is notoriously difficult to compute,
and up to now nonlinear terms of this invariant have only been computed for the spherical pendulum, cf.\ Dullin \cite{Du}.
The spherical pendulum  is however not in the class of semitoric systems for which the global classification by Pelayo \&  V\~u Ng\d{o}c \cite{PV1} applies, since it fails to satisfy certain compactness requirements. In the forthcoming work \cite{ADH2}, the authors of the present paper also compute nonlinear terms of the Taylor series invariant of the coupled angular momenta, together with other symplectic invariants, and study how these depend on the three parameters of the family. 

The goal of this paper is to compute some 
nonlinear terms of the Taylor series invariant of the spin-oscillator and also the so-called twisting-index invariant, making it the first semitoric system for which the complete list of invariants is known.

\subsubsection*{Setting and conventions}

Throughout the paper, let $(M,\omega)$ be a 4-dimensional connected symplectic manifold.

 To any smooth function $f:M \to \R$, the \emph{Hamiltonian vector field} $\mcX_f$ is associated via $\omega(\mcX_f,\cdot) = -df$. The flow of the Hamiltonian vector field is called the \emph{Hamiltonian flow}. If $f,g:M \to \R$ are two smooth functions, their \emph{Poisson bracket} is defined by $\{f,g\}:=\omega(\mcX_f,\mcX_g) = -df(\mcX_g) = dg(\mcX_f)$. The functions $f$ and $g$ are said to \emph{Poisson-commute} if $\{f,g\}=0$, which means that each function is constant along the Hamiltonian flow lines of the other one.

Let $(L,H):M \to \mathbb{R}^2$ be a pair of smooth functions. We say that the triple $(M,\omega,(L,H))$ is a \emph{completely integrable system} with two degrees of freedom if the functions $L$ and $H$ Poisson-commute and their differentials $dL$, $dH$ are almost everywhere linearly independent. Therefore the \emph{momentum map} $F:=(L,H): M \to \R^2$ induces a fibration on $M$.

The points where the differentials $dL$, $dH$ fail to be linearly independent, i.e., $dF$ has not maximal rank, are called \emph{critical points} or \emph{singularities}. Noncritical points are called \emph{regular}. Compact connected fibres consisting entirely of regular points are called \emph{regular} and form so-called \emph{Liouville tori} where the behaviour of the system is simple as described by the \emph{Arnold-Liouville theorem} (see Arnold \cite{Ar}). As a consequence, distinguishing symplectic-dynamical properties of the system must be encoded in the \emph{singular} fibres, i.e., fibres containing at least one singularity.

The singularities of completely integrable systems in $2n$ dimensions have been characterised by Eliasson \cite{El1,El2} and Miranda \& Zung \cite{MZ} by means of normal forms. In particular, if we restrict us to the \emph{4-dimensional} case and to non-degenerate singularities (see Bolsinov \& Fomenko \cite{BoF} or Vey \cite{Ve} for a precise definition), given a singularity $m \in M$, there exist local symplectic coordinates $(x_1,y_1,x_2, y_2)$ centered at $m$ and functions $(Q_1,Q_2)$ satisfying $\{L,Q_i\}=\{H,Q_i\}=0$ for $i=1,2$ of the following types:
\begin{quote}
\begin{enumerate}[a)]
	\item \textit{Elliptic component:} $Q_i(x_1,y_1,x_2, y_2)=({x_i}^2+{y_i}^2)/2$.
	\item \textit{Hyperbolic component:} $Q_i(x_1,y_1,x_2, y_2)=x_i y_i$.
	\item \textit{Focus-focus components (always come in pairs): }
	
	$Q_1(x_1,y_1,x_2, y_2)=x_1 y_1 + x_2 y_2$ \textit{and} $Q_2(x_1,y_1,x_2, y_2) = x_1 y_2 - x_2 y_1.$	
	\item \textit{Regular component:} $Q_i(x_1,y_1,x_2, y_2) = y_i$.
\end{enumerate}
\end{quote}
where the case a), b), and d) can be mixed to obtain two functions $Q_1$ and $Q_2$.

A \emph{semitoric system} is a four-dimensional completely integrable system $(M,\omega,(L,H))$ with two degrees of freedom where all singularities are non-degenerate, have no hyperbolic components and the map $L$ induces a faithful Hamiltonian ${\mathbb S}^1$-action on $M$ and is \emph{proper} (i.e., the preimage by $L$ of a compact set is compact in $M$). In particular, the Hamiltonian flow of $L$ is $2\pi$-periodic.

Excluding hyperbolic components, the singularities of semitoric systems can only be the combinations \emph{elliptic-elliptic}, \emph{regular-elliptic} (often called \emph{transversally elliptic}) or \emph{focus-focus} type depending on whether they have two elliptic components, one elliptic component and one regular component, or coupled focus-focus components respectively.

Semitoric systems appear often in physics and have attracted the attention of mathematicians in the last years, see Pelayo \& V\~u Ng\d{o}c \cite{PV2} for an overview. They have recently given  a symplectic classification of semitoric systems in terms of the following five symplectic invariants, cf.\ Pelayo \& V\~u Ng\d{o}c  \cite{PV1}, \cite{PV4}:

\begin{quote}
\begin{enumerate}[(1)]
	\item The \textit{number of focus-focus singularities}, denoted by $\nff$.
	\item \textit{Polygon invariant:} an equivalence class of labelled collections of rational convex polygons and vertical lines crossing them, cf.\ section \ref{sec:defpolygon} for more details.
	\item \textit{Height invariant:} $\nff$ numbers corresponding to the height of the focus-focus critical values in the rational convex polygons, cf.\ section \ref{sec:defheight} for more details.
	\item \textit{Taylor series invariant:} a collection of $\nff$ formal Taylor series in two variables describing the foliation around each focus-focus singular fiber, cf.\ section \ref{sec:deftaylor} for more details.
	\item \textit{Twisting-index invariant:} $\nff$ integers measuring the twisting of the system around singularities, cf.\ section \ref{sec:deftwistingIndex} for more details.
\end{enumerate}
\end{quote}

Two semitoric systems $(M_1,\omega_1,(L_1,H_1))$ and $(M_2,\omega_2,(L_2,H_2))$ are said to be \emph{isomorphic} if there exists a symplectomorphism $\varphi:M_1 \to M_2$ such that $\varphi^*(L_2,H_2) = (L_1,f(L_1,H_1))$, where $f$ is some smooth function such that $\frac{\partial f}{\partial H_1}>0$. The importance of the symplectic classification lies in the fact that two semitoric systems are isomorphic if and only if their five symplectic invariants coincide. Moreover, given an `admissible' list of invariants, the corresponding semitoric system can be constructed.

Probably the simplest example of a non-compact semitoric systems is the coupled spin-oscillator, consisting of the coupling of a classical spin on the 2-sphere ${\mathbb S}^2$ with a harmonic oscillator in the plane $\mathbb{R}^2$, 
as studied by Pelayo \& \vungoc in \cite{PV3}. It is a simplification of the Jaynes-Cummings model, see Babelon \& Cantini \& Douçot \cite{BCD} for a recent study. 

Consider $M=\mbS^2 \times \R^2 $ and let $(x,y,z)$ be Cartesian coordinates on the unit sphere ${\mathbb S}^2 \subset \mathbb{R}^3$ and $(u,v)$ Cartesian coordinates on the plane $\mathbb{R}^2$. 
Endow $M$ with the symplectic form $\omega = \lambda\, \omega_{{\mathbb S}^2} \oplus \mu\, \omega_{\mathbb{R}^2}$, 
where $\omega_{{\mathbb S}^2}$ and $\omega_{\mathbb{R}^2}$ are the standard symplectic structures on ${\mathbb S}^2$ and $\mathbb{R}^2$ respectively and $\lam,\mu>0$ are positive constants. 
The \emph{spin-oscillator} is a 4-dimensional Hamiltonian integrable system $(M,\omega,(L,H))$, where the momentum map  $F=(L, H): M \to \mathbb{R}  ^2$ is given by
\begin{equation*}
L(x,y,z,u,v):= \mu \dfrac{u^2+v^2}{2} + \lambda (z-1) \quad \text{and} \quad H(x,y,z,u,v):=\dfrac{xu+yv}{2}.
\end{equation*}

The map $L$ is the momentum map for the simultaneous rotation of the sphere around its vertical axis and the plane around the origin. The image of the map $F$ is displayed in Figure \ref{figmom}. The system has exactly one focus-focus singularity, one elliptic-elliptic singularity and two one-parameter families of transversally elliptic singularities.

\begin{figure}[ht]
 \centering
\labellist
 \tiny \hair 2pt
 \pinlabel $-2\sqrt{\frac{\lambda}{\mu}}$ at 63 4
 \pinlabel $-\sqrt{\frac{\lambda}{\mu}}$ at 64 46
 \pinlabel $\sqrt{\frac{\lambda}{\mu}}$ at 67 130
 \pinlabel $2\sqrt{\frac{\lambda}{\mu}}$ at 66 172
\endlabellist
\includegraphics[width=8cm]{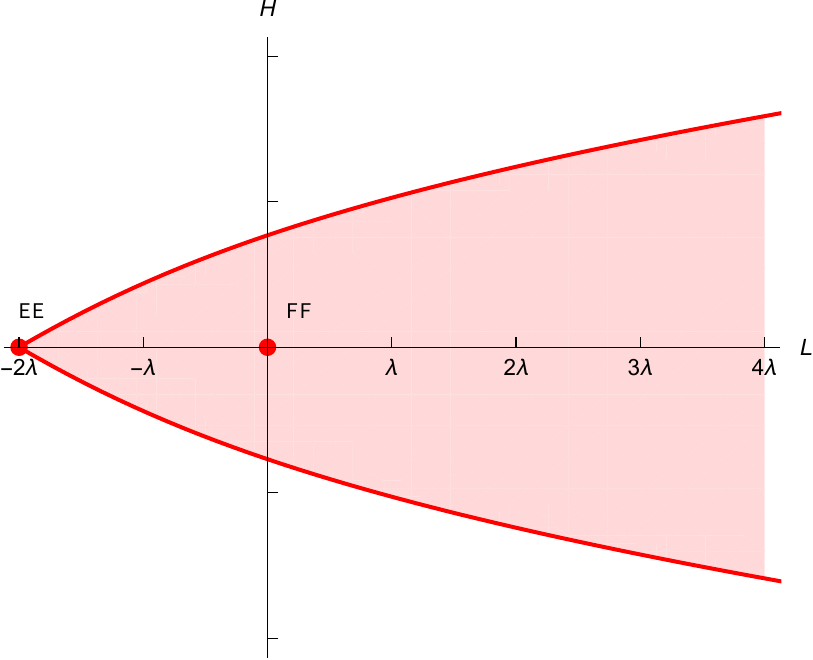}
 \caption{\small Image of the momentum map $F=(L,H)$. The spin-oscillator has one singularity of elliptic-elliptic type at the critical value $(-2\lam,0)$ and one of focus-focus type at $(0,0)$.}
 \label{figmom}
\end{figure}

\subsubsection*{Main results}

The symplectic invariants of the spin-oscillators have been explicitely calculated by Pelayo \& V\~u Ng\d{o}c in \cite{PV3} except for the Taylor series invariant, which is only given up to linear order, and the twisting-index invariant. The main result of the present paper is the following:

\begin{theol}
Let $l$ be the value of the integral $L$ and $j$ be the value of Eliasson's $Q_1$ function.
The leading order terms of the Taylor series invariant of the spin-oscillator 
are
\begin{align*}
S(l,j) &= \dfrac{\pi}{2} l + (5 \log 2+ \log \lambda) j + \dfrac{1}{4\lambda} l j - \dfrac{1}{768\lambda^2}j (39 {l}^2 + 34 {j}^2) + \dfrac{1}{1536 \lambda^3}j(34l {j}^2 + 23 {l}^3) \nonumber \\
&- \dfrac{1}{2621440 \lambda^4}j (13505 {l}^4+30620 {l}^2{j}^2+10727 {j}^4) + ...,
\end{align*}
and $\partial S/\partial j$ is an even function of $j$.
\end{theol}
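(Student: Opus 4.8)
The plan is to compute the Taylor series invariant $S(l,j)$ via the standard action-integral recipe of Pelayo--V\~u Ng\d{o}c, and then to extract the symmetry statement from a discrete symmetry of the system itself. Concretely, near the focus-focus value $(0,0)$ one picks a vanishing cycle $\gamma_{l,j}$ on the regular torus $F^{-1}(l,j)$ (using Eliasson coordinates $(Q_1,Q_2)$ with $j$ the value of $Q_1$ and $l$ the value of $L$), and forms the action-type integral $\mathcal{A}(l,j) = \oint_{\gamma_{l,j}} \alpha$ for a suitable primitive $\alpha$ of $\omega$. The Taylor series invariant $S$ is then defined, after subtracting the known logarithmic singularity $\operatorname{Im}\bigl((j+il)(\log(j+il)-1)\bigr)$ coming from the focus-focus point, as the regular part of this integral expanded at $(l,j)=(0,0)$; its Taylor coefficients are the invariants. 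So the first step is to set up this integral explicitly for the spin-oscillator, using the reduction already worked out in \cite{PV3}: on the relevant energy-momentum level the fiber can be parametrised by a single angle, and $\mathcal{A}(l,j)$ reduces to a concrete one-dimensional integral (an elliptic-type integral in the parameters $l,j,\lambda,\mu$). The $\mu$-dependence should drop out or rescale trivially since $\mu$ only rescales the $\mathbb{R}^2$ factor and $l$.

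**The second step** is the actual series expansion: expand the integrand in powers of $l$ and $j$ around the focus-focus fiber, integrate term by term, and collect. The leading linear terms $\tfrac{\pi}{2}l + (5\log 2 + \log\lambda)j$ should reproduce the known linear invariant of \cite{PV3} (up to the normalisation conventions for the $\log$), which is a useful consistency check; the higher-order coefficients $\tfrac{1}{4\lambda}lj$, the cubic term, etc., come out of pushing the expansion further. I expect the bookkeeping here to be the most laborious part but not conceptually deep — it is the kind of computation done in Dullin \cite{Du} for the spherical pendulum — and it is best organised by computer algebra. One subtlety worth flagging: the integral has an integrable (logarithmic) singularity as $(l,j)\to 0$, so the term-by-term expansion must be done after the singular part is split off; getting the regular/singular splitting right is where sign and normalisation errors creep in.

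**The symmetry statement** — that $\partial S/\partial j$ is even in $j$ — I would prove not by inspecting the series but structurally, from an involution of $(M,\omega,(L,H))$. The map $(x,y,z,u,v)\mapsto(x,-y,z,u,-v)$ (complex conjugation in the appropriate sense on both factors) is an anti-symplectic involution: it sends $\omega\mapsto-\omega$, fixes $L$, and fixes $H$. Anti-symplectic symmetries of semitoric systems act on the focus-focus data by conjugating the Eliasson normal form, which exchanges the two focus-focus components up to sign: effectively $Q_2\mapsto -Q_2$ while $Q_1=j$ and $L=l$ are preserved. Since the Taylor series invariant is built from the joint foliation near the focus-focus fiber and the cycle $\gamma_{l,j}$ gets reversed in orientation under the involution, one gets $S(l,-j') = \pm S(l,j')$-type relations — more precisely, the invariant is defined so that $S$ itself is determined up to the flip of the second Eliasson variable, and the anti-symplectic involution forces the representative to be invariant under that flip. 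Unwinding the definition in section \ref{sec:deftaylor}, this invariance is exactly the statement that all terms in $S$ odd in the variable paired with $Q_2$ vanish, and after the change of variables used to present $S$ as a function of $(l,j)$ this becomes: $S(l,j)$ contains only odd powers of $j$ overall in its genuinely two-variable part, equivalently $\partial S/\partial j$ is even in $j$. (The linear term $(5\log2+\log\lambda)j$ and each displayed higher term is indeed odd in $j$, consistent with this.)

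**The main obstacle** I anticipate is the second one done rigorously: making the symmetry argument airtight requires being careful about how an anti-symplectic involution acts on the Eliasson normal form and on the choice of primitive $\alpha$ and cycle $\gamma_{l,j}$ entering the definition of $S$ — in particular tracking the orientation reversal of $\gamma_{l,j}$ and the sign on $\omega$ together, so that one lands on the precise normalisation of $S$ fixed in \cite{PV1}. Once that is pinned down the parity conclusion is immediate, and it also serves as a strong consistency check on the explicit coefficients computed in the first two steps.
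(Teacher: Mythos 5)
Your first two steps---expressing the action as an elliptic integral via symplectic reduction and expanding it about the focus-focus value---are indeed the paper's strategy in section~\ref{sec3} and section~4, and your consistency checks (recovering the linear term of \cite{PV3}, the vanishing $\mu$-dependence after rescaling) are both correct and used in the paper. However, you have glossed over a step that is not cosmetic: the action integral is naturally computed as a function of $(l,h)$, where $h$ is the value of $H$, while the Taylor series invariant must be presented as a function of $(l,j)$, where $j$ is the value of the Eliasson function $Q_1$. Passing from $h$ to $j$ requires computing the Birkhoff normal form $h=B(j,l)$ by \emph{inverting} the imaginary action integral $J(l,h)$ taken over the vanishing $\alpha$-cycle (the paper's Lemma~\ref{lact}, following Dullin~\cite{Du}). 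This inversion is as substantial a computation as the elliptic-integral expansion itself, and it is precisely what turns the rational-function-and-square-root mess in the expansion of $I(l,h)$ into a clean polynomial in $(j,l)$. Your sketch of an integral $\mathcal{A}(l,j)$ over a fiber ``parametrised by a single angle'' silently assumes this reparametrisation is already available; it is not.

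Your symmetry argument is also off-target. The involution $(x,y,z,u,v)\mapsto(x,-y,z,u,-v)$ you propose is indeed anti-symplectic, but it \emph{fixes} both $L$ and $H$; and in the conventions of section~\ref{sec:deftaylor} the periodic Eliasson function $Q_2$ corresponds to $L$ while $Q_1$ (whose value is $j$) corresponds to the energy direction, so a transformation fixing $H$ cannot be expected to flip $j$. You yourself write that the involution sends ``$Q_2\mapsto-Q_2$ while $Q_1=j$ and $L=l$ are preserved,'' which is internally inconsistent (if $Q_2\sim L$ then $L$ cannot be both preserved and negated) and, more importantly, does not produce the sign flip $j\mapsto -j$ that the parity statement is about. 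The paper's argument in Theorem~\ref{noeven} uses instead the \emph{symplectic} transformations $T_1:(x,y)\mapsto(-x,-y)$ and $T_2:(u,v)\mapsto(-u,-v)$ of \eqref{transf}, which preserve $\omega$ and $L$ but send $H\mapsto-H$. Because $J(l,h)$ is odd in $h$, these induce $j\mapsto-j$, and combined with the area relation $\mathcal{A}(-j,l)=4\pi\lambda-\mathcal{A}(j,l)$ from Remark~\ref{symmet} this yields $S(-j,l)=-S(j,l)+\pi l$, i.e.\ $S(j,l)-\tfrac{\pi}{2}l$ is odd in $j$. This route is both simpler and actually correct; it sidesteps entirely the orientation-reversal bookkeeping you flag as your main worry.
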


Theorem A is restated and proven in Theorem \ref{inv1} and Theorem \ref{noeven}. The second author of the present paper calculated the Taylor series invariant for the spherical pendulum in \cite{Du}. The spherical pendulum is a completely integrable system which also has a simple focus-focus point, but it is in fact a \emph{generalised} semitoric system, since the angular momentum integral fails to be proper (see Pelayo \& V\~u Ng\d{o}c \& Ratiu \cite{PRV} for details on generalised semitoric systems). In this paper, we apply similar ideas to calculate the higher order terms of the Taylor series invariant of the coupled spin-oscillators, which is a semitoric system.

To our knowledge this is the first time that the symplectic Taylor series  invariant has been explicitly calculated with its higher order terms for a semitoric system. 
Once the Taylor series invariant has been computed it is straightforward to obtain series expansion for derived dynamical 
quantities like the reduced period, the rotation number, and the twist, i.e.\ the derivative of the rotation number with respect 
to circular action at constant energy, as has been done in Dullin \cite{Du} for the spherical pendulum. In the upcoming work \cite{ADH2}, the authors compute the Taylor series invariant with higher order terms of the coupled angular momenta, a family of semitoric systems depending on three parameters.

Due to a discrete symmetry of the spin-oscillator we are also able to show the following:

\begin{theol}
The twist of the spin-oscillator vanishes on the energy surface
$H=0$.
\end{theol}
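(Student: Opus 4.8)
The plan is to leverage the discrete symmetry of the coupled spin-oscillator to conclude that the rotation number around the focus-focus point, as a function of the two action variables, is even in the energy-type variable; differentiating, the twist — which is the derivative of the rotation number with respect to the circular action at fixed energy — will then vanish identically on the fiber over $H=0$. First I would identify the relevant $\mathbb{Z}/2$-symmetry: the map $\sigma:(x,y,z,u,v)\mapsto(x,-y,z,u,-v)$ (or an equivalent sign flip) fixes $L$ and sends $H\mapsto -H$, hence descends to a symmetry of the singular foliation that fixes the focus-focus fiber $F^{-1}(0,0)$ and acts on nearby regular fibers by $(\ell,h)\mapsto(\ell,-h)$. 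The key point is to track how $\sigma$ interacts with Eliasson's normal form: since $\sigma$ preserves $Q_1=x_1y_1+x_2y_2$ and reverses the sign of $Q_2=x_1y_2-x_2y_1$, it conjugates the local Hamiltonian $\mathbb{S}^1$-action generated by $Q_2$ to its inverse, while preserving the other generator. This is precisely what is needed to force the relevant symmetry of the Taylor series invariant $S(\ell,j)$, namely that $\partial S/\partial j$ is even in $j$, which is already established in Theorem A.

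The main computation is then to translate the established symmetry property of $S$ into a statement about the twist. I would recall the standard dictionary (as carried out by the second author in \cite{Du} for the spherical pendulum): the rotation number $W$ of the flow on a regular Liouville torus near the focus-focus fiber is, up to the affine structure normalization, given by the derivative $\partial S/\partial j$ (more precisely $W$ is $\tfrac{1}{2\pi}$ times $\partial \sigma/\partial j$ for the appropriate action $\sigma$, where the imaginary part of the complexified action recovers $S$). The twist is $\partial W/\partial (\text{circular action})$ at constant energy $H=h$. Since $L$ itself is (essentially) the circular action and $j=Q_1$ is the complementary coordinate, differentiating $W(\ell,j)$ along the level set $\{h=\text{const}\}$ and then specializing to $h=0$ amounts to evaluating a derivative of $\partial S/\partial j$ at the focus-focus value. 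Because $\partial S/\partial j$ is an even function of $j$ and the locus $H=0$ corresponds (after the relevant change of variables) to $j=0$ — the focus-focus fiber sits at the "center" of the $j$-axis — the odd part of $W$ in $j$ vanishes there, and the twist, being an odd-order correction, is forced to be zero on that energy surface.

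I expect the main obstacle to be the bookkeeping that relates the abstract invariant $S(\ell,j)$ to the concrete dynamical twist on the $H=0$ surface: one must be careful about (i) which action variable plays the role of "circular action" versus the role of $j$, (ii) the affine/GL$(2,\mathbb{Z})$ normalizations built into the definition of $S$ in section \ref{sec:deftaylor}, and (iii) verifying that the energy level $H=0$ really does correspond to the symmetric locus of the foliation and not merely to some nearby fiber. Concretely, I would argue that because $\sigma$ fixes the focus-focus fiber and the circular action $L$ while reversing $H$, the level set $H=0$ is exactly the $\sigma$-fixed locus in action space, so the twist function restricted there must be $\sigma$-anti-invariant and $\sigma$-invariant simultaneously, hence zero. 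Once this structural point is pinned down, the vanishing is immediate from Theorem A; the remaining work is purely checking that the symmetry $\sigma$ is a genuine isomorphism of the system in the sense used throughout the paper, which is a short verification on the explicit formulas for $L$ and $H$.
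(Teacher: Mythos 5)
Your overall strategy is the same as the paper's: exploit a discrete $\mathbb{Z}_2$-symmetry of the singular foliation that preserves $L$ and flips $H$, pass to the parity statement for the Taylor series (Theorem \ref{noeven}), and conclude that the twist vanishes on the fixed locus. That said, there are two concrete errors in the setup and one genuine gap in the final step.

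\emph{Wrong symmetry and wrong $Q_i$.} The map you propose, $\sigma:(x,y,z,u,v)\mapsto(x,-y,z,u,-v)$, does \emph{not} flip $H$: one computes $H\circ\sigma = \tfrac12\big(xu+(-y)(-v)\big) = \tfrac12(xu+yv) = H$. Moreover $\sigma$ is anti-symplectic (it is a reflection on each factor). The symmetries the paper uses, $T_1: (x,y)\mapsto(-x,-y)$ and $T_2:(u,v)\mapsto(-u,-v)$ from \eqref{transf}, are the $\pi$-rotations on each factor; these are symplectic, fix $L$ and flip $H$. Relatedly, your claim that the symmetry ``preserves $Q_1=x_1y_1+x_2y_2$ and reverses the sign of $Q_2=x_1y_2-x_2y_1$'' is backwards: the paper identifies $\Phi_2=L$ (the $2\pi$-periodic component), so the symmetry fixes $Q_2$ and reverses $Q_1$; the variable $j$ is the value of $Q_1$, consistently with Theorem A being a parity statement in $j$.

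\emph{The key gap.} The step that actually needs proof is your assertion that ``the twist function restricted there must be $\sigma$-anti-invariant.'' This is not a formality: a foliation-preserving symplectomorphism that exchanges the fibres over $(l,h)$ and $(l,-h)$ transforms the twist by a sign that depends on how it acts on the distinguished cycle $\beta_{l,h}$, and establishing that sign is exactly the content of Remark \ref{symmet}/\ref{symmet2} (the relation $I' = \mathrm{Area}_l/2\pi - I$ coming from the reversal of orientation of the reduced orbit) and of the paper's proof of Theorem \ref{theotwis}. The paper carries this out by writing $2\pi\hat{\mcT}(j,l) = -\partial^2\mcA/\partial l^2$ and checking, term by term in the expansion $\mcA(z) = 2\pi\lambda - j\log|z| + j + l\arg(z) + S(j,l)$, that the second $l$-derivative of each summand is odd in $j$; note that the singular terms $-j\log|z|$ and $l\arg(z)$ contribute and are not odd in $j$ themselves, only their second $l$-derivatives are. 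Your proposal does not account for these terms at all: evenness of $\partial S/\partial j$ alone does not give oddness of the twist, and the sentence ``the twist, being an odd-order correction, is forced to be zero'' does not follow from anything that precedes it. To complete your argument you would need to either track the transformation law of $I$ (or $\mcA$) under $T_1,T_2$ as in Remark \ref{symmet}, or redo the term-by-term parity count as in the paper.
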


Theorem B is restated in Theorem \ref{theotwis}. We show that this implies that there is an additional integral, which we give explicitly. As a consequence, the spin-oscillator is super-integrable on the energy surface $H=0$.

Up to our knowledge, the twisting index has never been computed explicitly anywhere in the literature.  For the coupled spin-oscillator, we obtain the following:

\begin{theol}
The twisting-index invariant of the coupled spin-oscillator system consists of the association of indices $k$ to each of the weighted polygons of the polygon invariant as represented in Figure \ref{twistbig}. 

\end{theol}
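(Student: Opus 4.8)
The plan is to compute the twisting index directly from its definition (to be recalled in section \ref{sec:deftwistingIndex}), following the standard recipe of Pelayo \& V\~u Ng\d{o}c. Recall that the twisting index of a focus-focus point compares two natural trivializations of the Lagrangian fibration near the corresponding critical value: on one hand the one coming from Eliasson's local normal form (the ``privileged'' momentum map, whose angular component is the local $2\pi$-periodic action built from $Q_1$, $Q_2$), and on the other the one coming from the global $\mathbb{S}^1$-action together with a choice of toric momentum map $f_\nu$ associated with a representative polygon $\Delta_\nu$ in the polygon invariant. The integer $k$ is the number of ``twists'' relating these two frames, i.e.\ it is read off from the difference of the second components, which near the focus-focus value differ by an affine function whose linear part involves $k$. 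Since the polygon invariant, the height invariant, and $\nff=1$ are already known from Pelayo \& V\~u Ng\d{o}c \cite{PV3}, and since the relevant asymptotic behaviour near the focus-focus value is exactly governed by the (now computed) leading terms of the Taylor series invariant $S(l,j)$ from Theorem A, all the ingredients are in place.

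Concretely, I would proceed as follows. First, fix one representative $(\Delta_0,\ell_0,\epsilon_0=+1)$ of the polygon invariant of the spin-oscillator as displayed in section \ref{sec:defpolygon}, and write down the associated toric momentum map $f_0$ explicitly on a neighbourhood of the focus-focus value $(0,0)$, using the known slopes of the polygon edges near that value. Second, construct the privileged local momentum map near the focus-focus point: here $L$ itself already generates the global $\mathbb{S}^1$-action, so the first component is $L$; the second ``action'' component $\Theta$ is obtained from the focus-focus normal form, and its derivative with respect to the regular action is encoded precisely by $\partial S/\partial j$ — this is where the singular/logarithmic behaviour enters. Third, compare $f_0$ and the privileged momentum map on the simply connected region near $(0,0)$ minus the vertical cut: the two differ by an element of $\mathrm{AGL}(2,\mathbb{Z})$ fixing the vertical direction, whose lower-left entry is, by definition, the twisting index $k$ attached to $\Delta_0$. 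Fourth, transport this across all the weighted polygons in the orbit under the action of the piecewise-linear group (the $(t,s)$-transformations and the group generated by the matrix $T=\begin{pmatrix}1&0\\1&1\end{pmatrix}$), which shifts $k$ in the prescribed way, thereby producing the full labelled family pictured in Figure \ref{twistbig}.

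I expect the main obstacle to be the matching step — making the comparison of the two trivializations rigorous rather than heuristic. The subtlety is that the privileged momentum map is defined only up to the Eliasson normalization, and one must check that the ``detorsification'' vector field / the asymptotic expansion of the rotation number near the focus-focus fiber is compatible with the global action generated by $L$; in practice this amounts to verifying that the constant and linear terms of $S(l,j)$ (namely $\tfrac{\pi}{2}l$ and $(5\log 2+\log\lambda)j$, together with the coupling term $\tfrac{1}{4\lambda}lj$) reproduce, via the Pelayo--V\~u Ng\d{o}c formula relating $S$ to the affine monodromy-type data, exactly the same integer as the geometric count from the polygon. A secondary but purely bookkeeping difficulty is keeping track of the orientation and sign conventions (the choice $\epsilon=+1$, the direction of the cut, the sign in $\omega(\mcX_f,\cdot)=-df$) so that the index printed on each polygon in Figure \ref{twistbig} is internally consistent with the representative chosen for the polygon invariant in section \ref{sec:defpolygon}. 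Once the single base case is pinned down, the remaining entries follow mechanically from the transformation law of the twisting index under the group action on weighted polygons.
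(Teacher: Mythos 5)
Your plan matches the paper's strategy: build the privileged momentum map $\nu=(L,H_p)$ near the focus-focus fibre, compare it with a toric momentum map from the polygon invariant, extract the integer $k$, and propagate it via the $\Z_2\times\mcG$-action on weighted polygons. The paper streamlines the ``matching step'' you flag as the main obstacle by constructing $H_p$ explicitly as $\tfrac{1}{2\pi}(\mcA(z)-\mcA_0)\circ\phi\circ F$ from the already computed action integral and $S$, invoking the uniqueness of $H_p$, and then observing that the image of $\nu$ is itself one of the weighted polygons in the equivalence class, which therefore has $k=0$ by construction — avoiding the need to first fix a representative $f_0$ and compute the affine difference.
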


Theorem C is restated more precisely and proven in Theorem \ref{inv2}. The twisting-index invariant has a stronger meaning for systems with more than one focus-focus singularity, since it allows for comparison of the relative twisting of different singularities. Nonetheless, it is defined for systems with just one focus-focus singularity, too. The authors also compute the twisting-index invariant of the coupled angular momenta, a family of semitoric systems with also one focus-focus point, in the subsequent work \cite{ADH2}. Very recently, an explicit family of compact semitoric systems admitting two focus-focus singularities was found by Hohloch $\&$ Palmer \cite{HP}.

\begin{figure}[ht!]
\centering
\subfloat[$\epsilon=+1$, $k=-1$]{
  \includegraphics[width=0.35\textwidth]{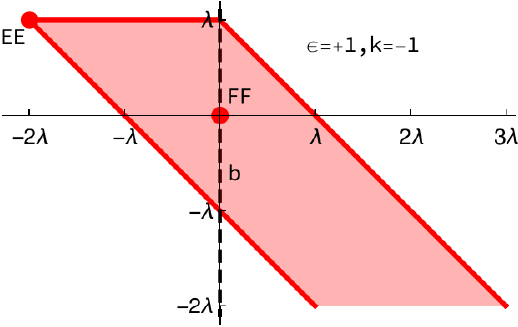}
}
\subfloat[$\epsilon=+1$, $k=0$]{
  \includegraphics[width=0.35\textwidth]{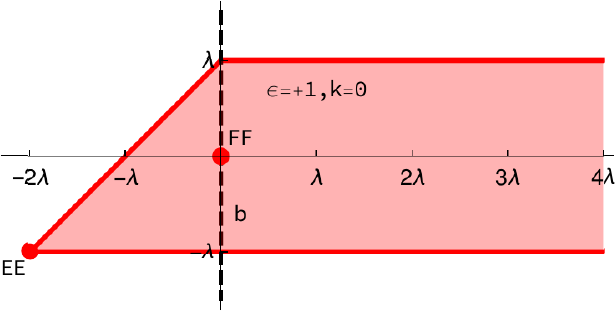}
}
\subfloat[$\epsilon=+1$, $k=+1$]{
  \includegraphics[width=0.25\textwidth]{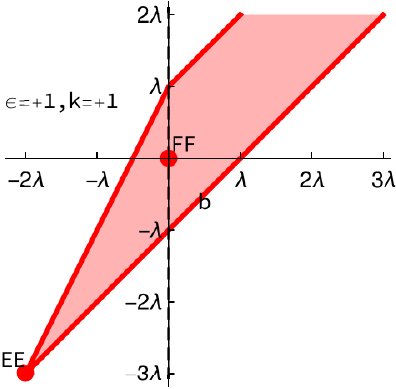}
}
\newline
\subfloat[$\epsilon=-1$, $k=-1$]{
  \includegraphics[width=0.35\textwidth]{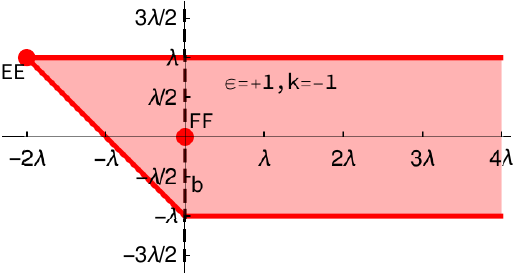}
}
\subfloat[$\epsilon=-1$, $k=0$]{
  \includegraphics[width=0.35\textwidth]{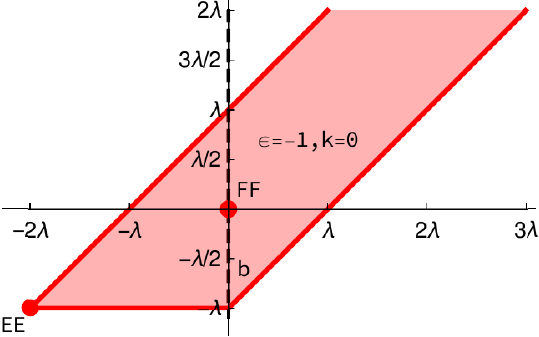}
}
\subfloat[$\epsilon=-1$, $k=+1$]{
  \includegraphics[width=0.2\textwidth]{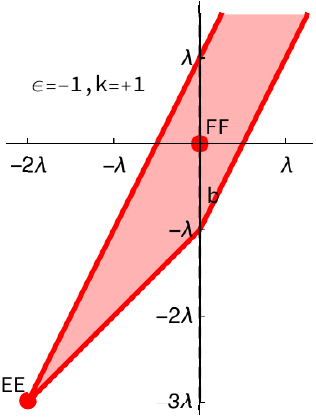}
}
    \caption{Representation of the twisting-index invariant of the coupled spin-oscillator. The polygon invariant consists of an infinite family of weighted polygons, some of which are depicted. The twisting-index invariant consists of the association of the index $k$ to each of the polygons of the polygon invariant. Note that polygons in the same row are related by an integral-affine transformation. Polygons in the same column correspond to different choices of cutting direction $\epsilon$ and therefore have the same index.}
    \label{twistbig}
\end{figure} 

\subsubsection*{Structure of the paper}

In section 2, we briefly recall  the definition of the semitoric invariants.
In section 3, we rewrite the coupled spin-oscillator in suitable local coordinates, recall some definitions and properties of elliptic integrals and compute the action integral of the system.
In section 4, we compute the Taylor series invariant. In section 5, we define and calculate the period and rotation number. In section 6, we study the twist and prove superintegrability on the energy surface $H=0$. In section 7, we compute the twisting index.


\subsubsection*{Figures} 

Figures 1, 2, 5 and 6 have been made with {\em Mathematica} and Figures 3 and 4 have been made with {\em Inkscape}.


\subsubsection*{Acknowledgements}

We wish to thank Yohann Le Floch, Joseph Palmer, San V\~u Ng\d{o}c, and Daniele Sepe for helpful discussions and the referee for carefully reviewing our manuscript. Moreover, the first author has been fully and the third author partially funded by the Research Fund of the University of Antwerp.


\section{Semitoric invariants}

In this section, we briefly recall the definition of the semitoric invariants restricting ourselves to the case of systems with one singularity of focus-focus type. We start by the Taylor series invariant, since it is the only \emph{semi-global} invariant, i.e.\ the only one that exclusively depends on the characteristics of the system in a neighbourhood of the critical fibre and therefore can be defined in more general classes of systems (see Pelayo \& V\~u Ng\d{o}c \& Ratiu \cite{PRV}). After that we continue with the number of focus-focus points invariant, the polygon invariant, the height invariant and the twisting-index invariant. These last four invariants  depend on the properties of the whole system and therefore are said to be \emph{global}. For more details on the definitions we refer to Pelayo \& \vungoc in \cite{PV1} and \cite{PV4}. 

In the whole section, let $(M,\omega,F:=(L,H))$ be a semitoric system with only one singularity $m \in M$ of focus-focus type. Assume, adding a constant to $F$ if needed, that $c:=F(m)=0$ and that $m$ is the only critical point of the singular fibre $F^{-1}(m)$, which is a generic condition (see Pelayo \& \vungoc \cite{PV1}). We denote the image of the momentum map by $B:=F(M) \subseteq \R^2$. The set of regular values of $F$ is then $B_r$ $:=$ Int$(B) \backslash \{c\}$.


\subsection{The definition of the Taylor series invariant}
\label{sec:deftaylor}

Let $\mcF$ be the associated singular foliation by the components of $F$. The neighbourhood of the critical point $m$ can be described using the normal forms by Eliasson \cite{El1,El2} and Miranda \& Zung \cite{MZ}, as mentioned in the introduction. There exist local symplectic coordinates $(x_1,y_1,x_2,y_2)$ around $m$ in which the foliation $\mcF$ is given by the level sets of the function $Q:=(Q_1,Q_2)$ with
\begin{equation}
Q_1(x_1,y_1,x_2,y_2):=x_1 y_1 + x_2 y_2,\qquad Q_2(x_1,y_1,x_2,y_2) := x_1 y_2 -  x_2y_1.
\label{norm}
\end{equation}

Then there is a local diffeomorphism $\phi$ of $\mathbb{R}^2$ such that $Q=\phi \circ F$. We can use this diffeomorphism to extend $Q$ to a global momentum map $\Phi := \phi \circ F$ for the whole foliation, which agrees with $Q$ on the neighbourhood of $m$. Define $\Phi := (\Phi_1,\Phi_2)$ and $\Lambda_z := \Phi^{-1}(z)$, so that the singular fibre of $m$ is $\Lambda_0$. From the form  \eqref{norm} we can see that close to the critical point, the $\Phi_2$-orbits must be $2\pi$-periodic  for any point of a  non-trivial trajectory generated by $\Phi_1$.

\begin{figure}[ht]
 \centering
 \includegraphics[width=9cm]{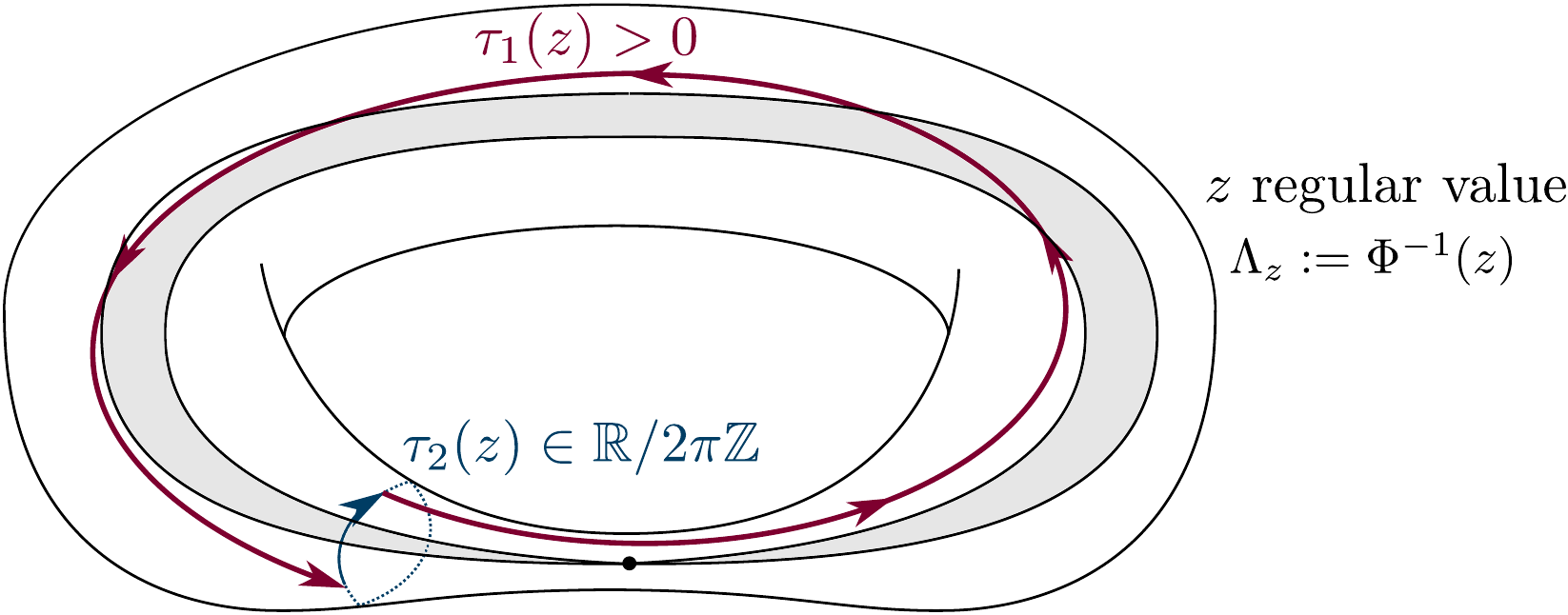}
 \caption{\small A regular fibre $\Lambda_z$ close to the singular fibre $\Lambda_0$ (gray). First we follow the flow generated by $\Phi_1$ (red) and then the flow generated by $\Phi_2$ (blue).}
 \label{cycles2}
\end{figure}

To define the symplectic invariant we need to work with the period lattice. Let us write $z$ as a complex variable $z = (z_1,z_2) = z_1 + \text{i} z_2$. Consider the fibre $\Lambda_z$ of a regular value $z$ and take a point $a \in \Lambda_z$.  From $a$ we may follow the Hamiltonian flow generated by $\Phi_1$ until we reach again the $\Phi_2$-orbit that passes through $a$. Once we reach this point, we may come back to $a$ following the Hamiltonian flow generated by $\Phi_2$. Define $\tau_1(z)>0$ as the time needed for the first displacement and $\tau_2(z) \in \mathbb{R}/2\pi \mathbb{Z} $ the time for the second displacement. Define also
\begin{equation}
\begin{cases}
\sigma_1 (z): = \tau_1(z) + \Re (\log z)\\
\sigma_2 (z): = \tau_2(z) - \Im(\log z)
\end{cases}
\label{taus}
\end{equation} where $\Re$ and $\Im$ represent the real and imaginary parts respectively and $\log$ is a complex logarithm that is smooth near $z$ and has a cut along the positive real axis. For later convenience, we choose the lift of $\tau_2$ to $\R$ that satisfies $\sigma_2(0)\in[0,2\pi[$ and keep this determination throughout the paper. V\~u Ng\d{o}c proved in \cite{Vu1} that $\sigma_1$ and $\sigma_2$ extend to smooth single-valued functions around 0 and that the differential one-form
\begin{equation}
\sigma := \sigma_1 dz_1+ \sigma_2 dz_2
\label{sigma}
\end{equation}
is closed. Then we come to the definition of the Taylor series invariant:
\begin{de}[From V\~u Ng\d{o}c \cite{Vu1}]
Let $S$ be the unique smooth function defined around $0 \in \mathbb{R}^2$ such that \begin{equation}
 \begin{cases}
 \dee S =\sigma\\
S(0)=0,
 \end{cases}
 \end{equation} where $\sigma$ is the one form given by \eqref{sigma}. The Taylor series of $S$ at $(0,0)$ is denoted by $(S)^\infty$. We say that $(S)^\infty$ is the Taylor series invariant of $(M,\omega,F)$ at the focus-focus point $m$.
\end{de}

Note that with our choice of determination of the log function, $\frac{\partial S}{\partial z_2}(0) \in [0,2\pi[$. When it comes to the computation of the Taylor series invariant, a particularly useful fact is that the function $S$ is related to the action $\mcA$ of the system. Let $\varpi$ be a semiglobal primitive of the symplectic form $\omega$, $\beta_z \subset \Lambda_z$ the trajectory used in the definition of the $\tau_i$ and $\mcA(z) := \oint_{\beta_z} \varpi$, then
\begin{equation}
S(z) = \mcA(z) - \mcA(0)+ \Re (z \log z - z).
\label{inv5}
\end{equation}
In these terms, $S(z)$ can be seen as a regularised or desingularised area (Pelayo \& V\~u Ng\d{o}c \cite{PV2}).

\begin{re} Besides the construction that we have seen in terms of the periods of the torus action on fibres close to the critical fibre, the Taylor series invariant admits other interpretations:
\begin{itemize}
	\item On a regular fibre $\Lam_z$, the first component of the system generates a $2\pi$-periodic flow but the second generates an arbitrary flow that turns indefinitely around the focus-focus singularity. As $z \to 0$, the time $\tau_1(z)$ that the flow of the second component needs to perform a loop grows at a logarithmic rate up to a certain `error term'. This error term is given by the symplectic invariant $(S)^\infty$.
	\item From a symplectic point of view, $(S)^\infty$ is also the germ of the (local) generating function of the (singular) Lagrangian fibration induced by $(L,H): M \to \R^2$.
\end{itemize}
\end{re}

\subsection{The number of focus-focus points invariant}
The number $\nff \in \N \cup \{0\}$ of singularities of focus-focus type is a symplectic invariant of the semitoric system. In our case, $\nff=1$.

\subsection{The definition of the polygon invariant} 

\label{sec:defpolygon}

The plane $\R^2$ has a standard integral affine structure defined by the group of integral-affine transformations Aff$(2,\Z)$ $ := $ GL$(2,\Z) \ltimes \R^2$ and $B$ has a natural affine structure induced by $F$. Consider now $\mcJ\subset $Aff$(2,\Z)$, the subgroup of transformations consisting of vertical translations composed with a transformation $T^k$, $k \in \Z$, where
\begin{equation}
 T^k := \begin{pmatrix}
1& 0 \\ k & 1
\end{pmatrix} \in \text{GL}(2,\Z).
\label{TT}
\end{equation} The transformations in $\mcJ$ leave vertical lines invariant. Let us denote the vertical lines by $b_\ka:=\{(\ka,y) \,|\, y \in \R\} \subset \R^2$. For any  $n \in \Z$, we define $t^n_{b_\ka}:\R^2 \to \R^2$ as the transformation consisting of the identity on the halfplane left of $b_\ka$ and $T^n$ on the halfplane right of $b_\ka$. Let $\ka := \pi_1(c)$ be the  first components of the critical values, where $\pi_1:\R^2 \to \R$ is the canonical projection onto the first coordinate. Given a sign $\epsilon \in \{-1,+1\}$, let $b_{\ka}^{\epsilon} \subset b_{\ka}$ be  the half-line that starts in $c$ and extends upwards if $\epsilon=+1$ or downwards if $\epsilon=-1$. 

We say that a subset $\De \subseteq \R^2$ is a \emph{convex polygon} if it is the intersection of closed half-planes, possibly infinite, such that on each compact subset of $\De$ there are at most a finite number of corner points. If furthermore the slopes of the edges meeting at each vertex are rational numbers, we say that $\De$ is {\em rational}. We denote  the space of rational convex polygons in $\R^2$ by Polyg$(\R^2)$ and the set of vertical lines in $\R^2$ by
$$ \text{Vert}(\R^2) := \{ b_\ka\,|\, \ka \in \R\}.$$

\begin{theo}[V\~u Ng\d{o}c \cite{Vu2}, Theorem 3.8]
For $\epsilon \in \{-1,+1\}$ there exists a homeomorphism $f = f_\epsilon : B \to \R^2$, unique modulo a left composition by a transformation in $\mcJ$, such that:
\begin{itemize}
	\item $f |_{(B \backslash b^{\epsilon}_\ka)}$ is a diffeomorphism into the image $\De :=f(B)$ of $f$.
	\item $\De$ is a rational convex polygon. 
	\item $f|_{(B_r\backslash b^{\epsilon}_\ka)}$ is affine, i.e.\ it sends the integral affine structure of $B_r$ to the standard affine structure of $\R^2$.
	\item $f$ preserves $L$, i.e.\ $f(l,h) = (f^{(1)}(l,h), f^{(2)}(l,h))= (l, f^{(2)}(l,h))$.
\end{itemize}
\label{thmpol} 
\end{theo}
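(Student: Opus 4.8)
The plan is to realize $f = f_\epsilon$ as a developing (``cartographic'') map of the integral-affine structure that $F$ induces on the regular part of the base, and then to show that cutting along a vertical half-line turns this a priori multivalued map into a single-valued homeomorphism with rational convex image. On $B_r = \mathrm{Int}(B)\setminus\{c\}$ the Arnold--Liouville action coordinates provide an integral-affine structure, i.e.\ an atlas whose transition maps lie in $\mathrm{Aff}(2,\Z)$. Since $\Lambda_0 = F^{-1}(c)$ is the only interior singular fibre, the holonomy of this structure is concentrated on a small loop around $c$; by the classical monodromy computation for a simple focus-focus point, in a basis adapted to the $2\pi$-periodic flow of $L$ this holonomy is the transvection $T$ of \eqref{TT}. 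Consequently the structure restricted to the simply connected set $B_r\setminus b^{\epsilon}_{\ka}$ --- obtained by cutting along the half-line from $c$ pointing up if $\epsilon=+1$ and down if $\epsilon=-1$ --- has trivial holonomy, hence admits a developing map, i.e.\ an affine immersion into $\R^2$, unique up to postcomposition with an element of $\mathrm{Aff}(2,\Z)$; by the Eliasson local models this map extends continuously to a map $f$ on all of $B\setminus b^{\epsilon}_{\ka}$, whose injectivity (and with it the diffeomorphism property) will follow from the convexity analysis below.

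Next I would normalize $f$ and establish uniqueness. Since $L$ is proper and generates a faithful $\mathbb S^1$-action, it is itself a global single-valued action coordinate, so the residual freedom in the developing map can be used to arrange $f^{(1)} = L$, that is $f(l,h) = (l, f^{(2)}(l,h))$ with $\partial f^{(2)}/\partial h > 0$. What is left over is then exactly a vertical translation composed with some power $T^k$, $k\in\Z$ --- precisely an element of $\mcJ$ --- which gives the uniqueness statement. Across $b^{\epsilon}_{\ka}$ the two local determinations of $f$ differ by $T^{\pm 1}$, the monodromy made concrete; this is why $f$ is only a homeomorphism globally while restricting to an affine diffeomorphism on $B_r\setminus b^{\epsilon}_{\ka}$.

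It remains to identify the image $\De := f(B)$. Near a transversally-elliptic boundary arc, resp.\ an elliptic-elliptic vertex, the Eliasson normal form shows that $F$ is, up to an integral-affine change of coordinates, the standard linear toric model; hence the edges of $\De$ have rational slopes, meet at integer-affine corners, and are finite in number on every compact set. For convexity one combines these local toric pictures --- where local convexity of the image holds by the Atiyah--Guillemin--Sternberg argument --- with a connectedness statement for the fibres of $L$: properness and $\mathbb S^1$-periodicity of $L$ force each slice $\{L = l\}\cap B$ to be a closed interval along which, after the cut, $f^{(2)}$ is monotone in $h$; combined with the boundary models this forces $\De$ to be the intersection of the closed half-planes supporting its edges, i.e.\ a (possibly unbounded) rational convex polygon, and at the same time yields the injectivity of $f$ on $B_r\setminus b^{\epsilon}_{\ka}$.

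The main obstacle is this last convexity step. In the compact toric case it would be immediate from Atiyah--Guillemin--Sternberg, but here the base is non-compact and carries one interior affine singularity, so one must instead run a local-to-global convexity argument (in the style of Condevaux--Dazord--Molino, Hilgert--Neeb--Plank, or Zung's topological study of integrable singularities) and check carefully that the vertical cut has absorbed \emph{all} of the monodromy, so that the straightened base reassembles into a single genuine convex polygon. The remaining ingredients --- existence of the developing map, the normalization by $L$, rationality at the elliptic points, and uniqueness modulo $\mcJ$ --- are comparatively routine once the normal-form and monodromy inputs are in hand.
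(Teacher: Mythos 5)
This theorem is stated in the paper purely as a citation to V\~u Ng\d{o}c \cite{Vu2}, Theorem~3.8, with no proof given, so there is nothing internal to the paper to compare against. Your proposal does, however, correctly reconstruct the strategy of the cited reference: action--angle variables equip $B_r$ with an integral-affine structure, the monodromy around a simple focus-focus value is the transvection $T$ in a basis adapted to the $2\pi$-periodic flow of $L$, the vertical cut along $b^\epsilon_\ka$ makes the complement simply connected so a developing map exists, the global single-valued action $L$ pins down the first component and reduces the ambiguity to exactly $\mcJ$, and rationality and convexity of the image come from the elliptic normal forms together with a local-to-global convexity argument on the $L$-slices. The one place your sketch is genuinely thin is the step you yourself flag: establishing that the straightened image is a single convex polygon and that $f$ is injective on $B_r\setminus b^\epsilon_\ka$. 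That is where the bulk of the work in \cite{Vu2} is done (connectedness of the $L$-fibres, monotonicity of $f^{(2)}$ in $h$, matching the local toric cones to global supporting half-planes), and it is not automatic from Atiyah--Guillemin--Sternberg because the base is non-compact and has an interior affine singularity; you correctly identify the needed ingredients but do not carry out the argument.
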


We see that the definitions of $f$ and $\De$ are unique up to two choices:
\begin{itemize}
	\item The sign $\epsilon$. A different choice $ \epsilon'$ changes $f$ by $f' = t_{u} \circ f$ and $\De$ by $\De' = t_{u} (\De)$, where $u = (\epsilon-\epsilon')/2$.
	\item A left composition by an element of $\mcJ$, which corresponds to a different initial set of action variables (V\~u Ng\d{o}c \cite{Vu2}, step 2, proof of Theorem 3.8).
\end{itemize}

In order to take into account this freedom when constructing the polygon invariant, we define a \emph{weighted polygon as a triple of the form
$$ \De_\text{weight} = (\De,b_{\ka},\epsilon)$$
where $\De \in \text{Polyg}(\R^2)$, $b_{\ka} \in \text{Vert}(\R^2)$ and $\epsilon \in \{-1,+1\}$. We denote by $\mcW$Polyg$(\R^2)$ the space of all weighted polygons of complexity one. Write now $\mathbb{Z}_2:=\Z \slash 2 \Z$ and define the group $\mcG:= \{T^k \,|\, k \in \Z\} \simeq \Z$, where $T^k$ is the matrix in \eqref{TT}. Then the product group $\Z_2 \times \mcG$ acts on $\mcW$Polyg$(\R^2)$ as
$$ (\epsilon',T^k) \cdot (\De,b_{\ka},\epsilon) = (t_{u}(T^k(\De)),b_{\ka},\epsilon' \epsilon),$$ where as before $u = (\epsilon-\epsilon')/2$.}

\begin{de}
Let $(M,\om,(L,H))$ be a semitoric system with one focus-focus singularity, $b_\ka$ the vertical line through the corresponding critical value and $\epsilon$ a sign choice. Then the polygon invariant is the orbit of the $\Z_2 \times \mcG$ action
$$ (\Z_2 \times \mcG) \cdot (\De,b_{\ka},\epsilon) \in \mcW\text{Polyg}(\R^2)/(\Z_2 \times \mcG),$$ where $\De = f_\epsilon(B) \subset \R^2$ is a rational convex polygon and $f_\epsilon$ is a homeomorphism as in Theorem \ref{thmpol}.
\end{de}

In other words, the symplectic polygon invariant consists of a collection of $\Z_2 \times \Z$ weighted polygons, i.e., rational complex polygons together with the specification of a line $b_{\ka}$ and a sign choice $\epsilon \in \{-1,+1\}$.


\subsection{The definition of the height invariant} 

\label{sec:defheight}

Let $B:=F(M) \subseteq \R^2$ be the image of the momentum map and $f:B \to \R^2$ one of the possible homeomorphisms of Theorem \ref{thmpol}. The map $\mu:M \to \R^2$ defined by $\mu := f \circ F = f \circ (L,H)$ is called \emph{generalised toric momentum map} for the semitoric system $(M,\omega,(L,H))$, whose image $\De \subseteq \R^2$ is a rational convex polygon. The {\em height invariant} of the semitoric system $(M,\om,(L,H))$ is the number
$$ h := \mu(m) - \!\!\min_{s \in \De \cap b_{\ka}} \!\!\pi_2(s),$$ where $\pi_2:\R^2 \to \R$ is the canonical projection onto the second coordinate. It corresponds to the vertical height of $\mu(m_i)$ inside the polygon $\De$ and is independent of the choice of $f$ (Pelayo \& V\~u Ng\d{o}c \cite{PV1}).

The height invariant admits a more geometrical interpretation too. Let $Y = L^{-1}(c) \subset M$ and split it into two submanifolds: $Y^+ := Y \cap \{ p \in M \; : \; H(p) > H(m)\}$ and $Y^- := Y \cap \{ p \in M  : H(p) < H(m)\}$. Then $h$ is the symplectic volume of $Y^-$, i.e., the real volume divided by $2\pi$.


\subsection{The definition of the twisting-index invariant}

\label{sec:deftwistingIndex}
Consider a neighbourhood $W \subset M$ of the critical fibre. We take symplectic coordinates $(x_1,y_1,x_2,y_2)$ and consider the local diffeomorphism $\phi$ of $\R^2$ and the global map $\Phi=\phi \circ F$ as in \S \ref{sec:deftaylor}.  Since the Hamiltonian flow of $\Phi_2$ is $2\pi$-periodic, the second component $\Phi_2$ must coincide with $L$, possibly up to a sign. Applying the symplectic transformation $(x_1,y_1) \mapsto (-x_1,-y_1)$ on the definition of the local symplectic coordinates if needed, we can assume that on $W$ we have $\Phi_2 = L$, so $\phi$ is of the form $\phi(l,h) = (\phi_1(l,h),l)$.

Let us write $V:=F(W)$, so that $F^{-1}(V)$ is a neighbourhood of the singular fibre $F^{-1}(m)$, and consider the restriction of the map $\Phi$ to this neighbourhood, which for simplicity we will also denote by $\Phi$. Then close to any regular torus we have the $2\pi$-periodic Hamiltonian vector field
\begin{equation}
 2\pi \mcX_p := (\tau_1 \circ \Phi) \mcX_{\Phi_1} + (\tau_2 \circ \Phi) \mcX_L,
 \label{Xp}
\end{equation}
where $\tau_1, \tau_2$ are the functions defined in \eqref{taus}. This vector field is smooth on $F^{-1}(V \backslash b_{\ka})$.

On the one hand, there exists a unique smooth function $H_p:F^{-1}(V \backslash b_{\ka}) \to \R$ whose Hamiltonian vector field is $\mcX_p$ and satisfies $\lim_{x \to m} H_p =0$ (Lemma 5.6 of Pelayo \& V\~u Ng\d{o}c \cite{PV1}). The momentum map $\nu = (L,H_p)$ is called the {\em privileged momentum map} for $(L,H)$ around the focus-focus critical value $c$.

On the other hand, the generalised momentum map $\mu$ has the components $\mu=(\mu_1,\mu_2) = (L,\mu_2)$. The relation between $\mu$ and $\nu$ is given by
\begin{equation}
\mu = T^{k} \nu\quad \text{ on }F^{-1}(V),
\label{twistwis}
\end{equation}
where $k \in \Z$ is called the {\em twisting index of $\De_\text{weight}$ at the focus-focus critical value $c$}. If we apply a global transformation $T^r \in \mcG$, i.e., if we pick another representative of the polygon invariant, $\nu$ remains unchanged while $\mu$ transforms into $T^{r} \mu$. As a consequence, under such transformation all twisting indices change by $k \to k + r$.

Consider the space $W\text{Polyg}(\R^2) \times \Z$ of all weighted polygons of complexity one with their corresponding twisting indices. The group action of $\Z_2 \times \mcG$ on $W\text{Polyg}(\R^2) \times \Z$ is defined as follows:
$$ (\epsilon,T^{k'}) \star (\De,b_{\ka},\epsilon,k): = (t_{u}(T^k(\De)),b_{\ka},\epsilon' \epsilon,k+k').$$ 

This allows to define the twisting-index invariant. 

\begin{de}
	The {\em twisting-index invariant} of $(M,\omega,(L,H))$ is the $(\Z_2 \times \mcG)$-orbit of weighted polygons labelled by the twisting indices at the focus-focus singularities of the system given by
$$ (\Z_2 \times \mcG) \star (\De,b_{\ka},\epsilon,k) \in (W\text{Polyg}(\R^2) \times \Z) / (\Z_2 \times \mcG). $$	
\end{de}

\begin{re}
\label{twi1}
The twisting-index invariant is then completely determined by associating an integer index $k$ to one of the weighted polygons of the polygon invariant, or alternatively by finding a weighted polygon with index $k=0$, since then the associated integer index to the rest of the polygons can be reconstructed by knowing that $\Z_2$ does not act on the index and $\mcG \simeq \Z$ acts by addition.
\end{re}


\section{Local coordinates, elliptic integrals, and action}

\label{sec3}


\subsection{The spin-oscillator in various coordinates}

Let $\lambda,\mu>0$ be positive constants. Consider the product manifold $M={\mathbb S}^2 \times \mathbb{R}^2$ with symplectic form $\omega = \lambda\, \omega_{{\mathbb S}^2} \oplus \mu\, \omega_{\mathbb{R}^2}$, where $\omega_{{\mathbb S}^2}$ and $\omega_{\mathbb{R}^2}$ are the standard symplectic structures on the unit sphere and the Euclidean plane respectively. Let $(x,y,z)$ be Cartesian coordinates on the unit sphere ${\mathbb S}^2 \subset \mathbb{R}^3$ and $(u,v)$ Cartesian coordinates on the plane $\mathbb{R}^2$. A \emph{coupled spin-oscillator} is a 4-dimensional Hamiltonian integrable system $(M,\omega,(L,H))$, where the smooth map $F=(L, H): M \to \R^2$ is given by
\begin{equation}
L(x,y,z,u,v):= \mu \dfrac{u^2+v^2}{2} + \lambda (z-1) \quad \text{and} \quad H(x,y,z,u,v):=\dfrac{xu+yv}{2}.
\label{syst}
\end{equation}
Coupled spin-oscillators are completely integrable systems, i.e.\ the Poisson bracket $\{L,H\}$ vanishes and the system is of semitoric type, cf.\ Pelayo \& V\~u Ng\d{o}c \cite{PV3}. The only existing focus-focus singularity is at the point $m:=(0,0,1,0,0)$, so we have $\nff=1$. We have shifted the value of $L$ by $\lambda$ so that we have $(L,H)(m)=(0,0)$. 

The system \eqref{syst} induces a foliation $\mcF$ on $M$. This foliation has some discrete symmetries. For example, the transformations
\begin{align}
T_1:&\; \quad x \mapsto -x,\; \quad  y\mapsto -y,\;  \quad H \mapsto -H\nonumber\\
T_2:&\; \quad u \mapsto -u,\; \quad  v \mapsto -v,\; \quad  H \mapsto -H
\label{transf}
\end{align} leave the symplectic form $\om$ and the foliation $\mcF$ unchanged. 
The group $\mathbb{Z}_2 \times \mathbb{Z}_2$ acts on $M$ thus by symplectic transformations. Since the Taylor series invariant is an invariant of the foliation $\mcF$, it must remain unchanged by these transformations.

In order to reduce the system by the ${\mathbb S}^1$-action, we define symplectic coordinates so that we can express the functions $L,H$ in a simpler way:
\begin{equation*}
\begin{array}{lll}
z:= \pm \sqrt{1-x^2-y^2}, && \rho := \dfrac{u^2+v^2}{2}, \\[0.3cm]
\theta := \arg \left( x+iy \right), && \varphi := \arg \left( u+iv\right). \\
\end{array}
\end{equation*} In these coordinates the functions $L$ and $H$ become
\begin{equation*}
L(z,\theta,\rho,\varphi)= \mu \, \rho + \lambda (z-1) \quad \text{and} \quad H(z,\theta,\rho,\varphi)=\dfrac{\sqrt{\rho (1-z^2)}\cos (\theta - \varphi)}{\sqrt{2}}.
\end{equation*}
The symplectic form in these coordinates is $\omega = \lambda\, d z \wedge d \theta + \mu\, d \rho \wedge d \varphi$. We now perform a linear coordinate change, given by
\begin{equation*}
\begin{array}{lll}
q_1:= \theta, && q_2 := \varphi-\theta, \\[0.3cm]
p_1 := \mu\, \rho + \lambda (z-1), && p_2 := \mu\, \rho. \\
\end{array}
\end{equation*} In these coordinates, $\omega$ becomes the standard symplectic form $\omega = d p_1 \wedge d q_1 + d p_2 \wedge d q_2$ and the functions become $L(q_1,p_1,q_2,p_2)= p_1$ and 
\begin{equation}
H(q_1,p_1,q_2,p_2)=\sqrt{\dfrac{-p_2(p_2-p_1)(p_2-p_1-2\lambda)}{2 \lambda^2 \mu}}\cos (q_2).
\label{eq1}
\end{equation} It is important to see that the Hamiltonian function $H$ is independent of $q_1$, which means that $p_1$ is a constant 
along the flow of $H$. More precisely, $L=p_1$ is the total angular momentum of the system and generates the global ${\mathbb S}^1$-action corresponding to simultaneous rotation of the sphere about the vertical axes and of the plane about the origin. From the coordinate expressions we see that $p_1 \geq -2\lambda$ and $p_2$ must satisfy the inequalities $p_2 \geq 0$, $p_2 \geq p_1$ and $p_2 \leq p_1 + 2\lambda$. Under these circumstances, the argument of the square root is non-negative and  $H$ is well-defined.


\subsection{Elliptic integrals}

\label{sec:ellipticInt}

Elliptic integrals appear often in different areas of mathematics and physics, such as classical mechanics or complex function theory. In this section we review some basic definitions and properties of elliptic integrals that we use throughout the paper. For a more detailed discussion, see for example Siegel \cite{Si1}, \cite{Si2} and Bliss \cite{Bl}.

Let $z,w$ be two variables, which might either be real or complex,  $R(z,w)$ be a rational function and $P(z)$ a polynomial of degree three or four. Integrals of the form 
\begin{equation}
\mathcal{N}(x) := \int_c^x R(z,\sqrt{P(z)})\, d z,
\label{ellip}
\end{equation} where $c$ is a constant, are called \emph{elliptic integrals}. Except in special situations, such as $R(z,w)$ depending only on even powers of $w$ or $P(z)$ having repeated roots,  elliptic integrals cannot be expressed in terms of elementary functions. It is however possible to express them in terms of integrals of rational functions and the three \emph{Legendre canonical forms}:
\begin{align*}
& F(x; k)  := \int_0^x \dfrac{dt}{\sqrt{(1-t^2)(1-k^2t^2)}}, \\
& E(x; k)  := \int_0^x \dfrac{\sqrt{1-k^2t^2}}{\sqrt{1-t^2}}dt, \\
& \Pi(n; x; k)  := \int_0^x \dfrac{dt}{(1-nt^2) \sqrt{(1-t^2)(1-k^2t^2)}}.
\end{align*} 
The functions $F(x; k)$, $E(x; k)$ and $\Pi(n; x; k)$ are called \emph{incomplete integral of first, second} and \emph{third kind} respectively. The number $k$ is called the \emph{(elliptic) modulus} or \emph{excentricity}, $n$ is said to be the \emph{characteristic} and $x$ sometimes simply receives the name of \emph{argument}.

$F(x;k)$ is finite for all real or complex values of $x$, including infinity. $E(x;k)$  has a simple pole of order one at $x=\infty$ and $\Pi(n; x; k)$ is logarithmically infinite for $x^2=\frac{1}{n}$. For the particular case $x=1$, the integrals are said to be \emph{complete}. More precisely, $K(k):=F(1;k)$ is the \emph{complete elliptic integral of first kind}, $E(k):=E(1;k)$ is the \emph{complete elliptic integral of second kind} and $\Pi(n,k) := \Pi(n; 1; k)$ is the \emph{complete elliptic integral of third kind}.

For most applications in classical mechanics, the variables $z$ and $w$ of \eqref{ellip} are real. However, it is often convenient to allow them to take complex values. In this case, the elliptic curve 
\begin{equation*}
\Gamma := \{ (z,w) \in \mathbb{C}^2 \; :\; w^2 = P(z) \} 
\end{equation*} will be a \emph{Riemann surface}, i.e.\ a one-dimensional complex manifold. Using complex function theory it can be shown that for the elliptic case $\deg P = 3$ or $4$ the surface $\Gamma$ is homeomorphic to a torus. More generally, if $n=\deg P$, then $\Gamma$ will be a compact surface of genus $\frac{n-2}{2}$ if $n$ is even or genus $\frac{n-1}{2}$ if $n$ is odd, assuming that the curve is non-singular.

Restricting ourselves to the elliptic case, $\Gamma$ is homeomorphic to a torus $\mathbb{T}^2 = \mathbb{S}^1 \times \mathbb{S}^1$. This means that the rank of its first homology group is 2, generated by two independent non-contractible cycles $\alpha$ and $\beta$ (see Figure \ref{cycles}). When computing the integral of the action on the regular fibres close to the singular fibre containing the focus-focus point, we will refer to the  cycle $\alpha$ as the {\em `imaginary'} or {\em `vanishing' cycle}, since it becomes arbitrarily small as we approach the singular fibre, and $\beta$ as the {\em `real' cycle}, since it corresponds to the real elliptic curve obtained by considering the values of the variables $z,w$ of $\Ga$ to be real.

\begin{figure}[ht]
 \centering
 \includegraphics[width=6cm]{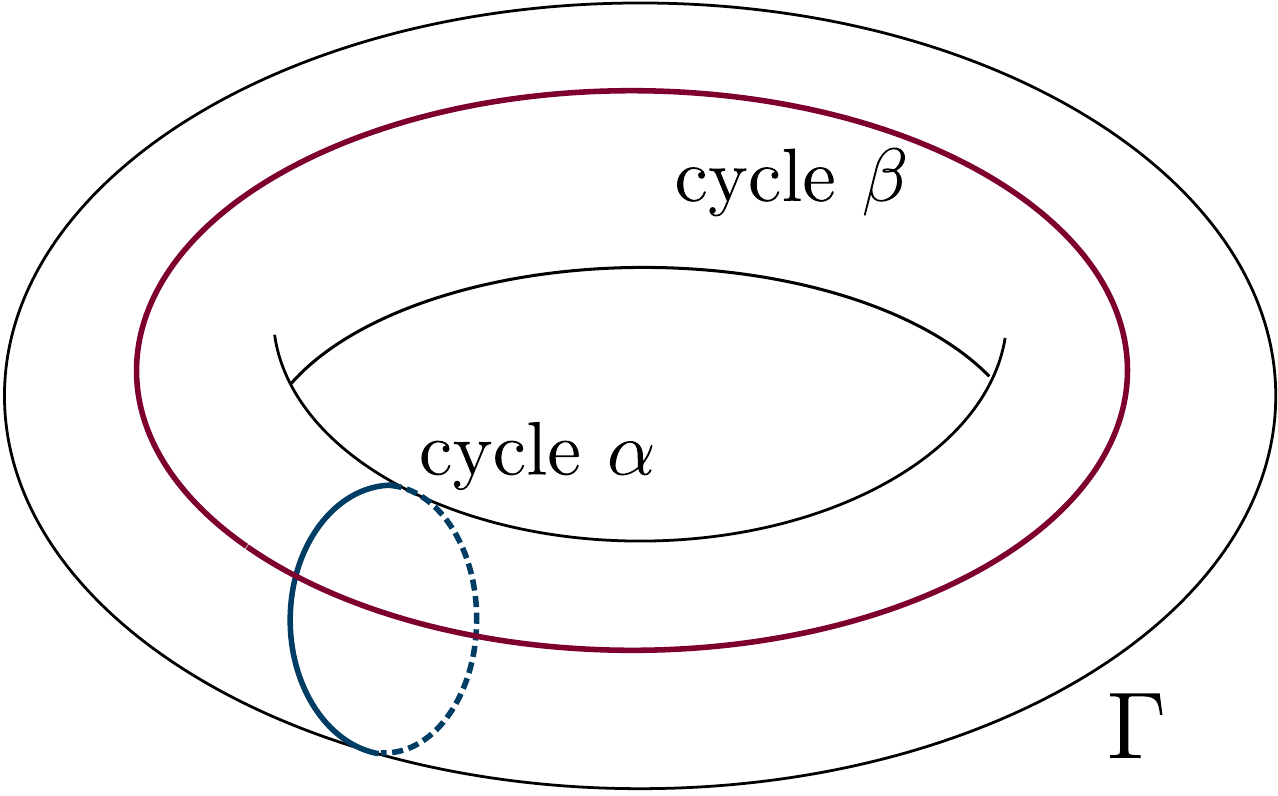}
 \caption{\small The elliptic curve $\Gamma$ with the imaginary cycle $\alpha$ (blue colour) and the real cycle $\beta$ (red colour).}
 \label{cycles}
\end{figure}


\subsection{The action integral}

Let $l$ and $h$ be the values of the functions $L$ and $H$, which are conserved quantities of the system. Since $p_1=l$ is a first integral of the system, we can perform symplectic reduction on the level $L=l$ for any $l \geq -2\lam$ and consider the reduced system
\begin{equation}
H_{red}(q_2,p_2):=\sqrt{\dfrac{-p_2(p_2-l)(p_2-l-2\lambda)}{2 \lambda^2 \mu}}\cos (q_2),
\label{eqred}
\end{equation}  
depending only on coordinates $(q_2,p_2)$. Note that the function $H_{red}(q_2,p_2)$ is even with respect to $q_2$, i.e.\ $H_{red}(-q_2,p_2)=H_{red}(q_2,p_2)$. The phase space is the finite cylinder given by 
\begin{equation}
\max\{0,l\} \leq p_2 \leq l+2\lam \quad \mbox{and} \quad -\pi\leq q_2 \leq \pi.
\label{vol}
\end{equation}
This reduced system is defined on an orbifold that is obtained by symplectic reduction of the symmetry induced 
by the global $\mbS^1$-action. The coordinates $(q_2, p_2)$ are a system of global singular coordinates on the orbifold. For $l \not = 0$ 
the reduced phase space is in fact a smooth manifold (or a point), while for $l=0$ it has a single conical singularity. This can best
be seen by considering the Hilbert invariants of the action of the flow of $L$, which are
$\rho_1 = u^2 + v^2$, $\rho_2 = x^2 + y^2$, $\rho_3 =  u x + v y$, $\rho_4 = u y - v x$ and $z$,
with relation $\rho_3^2 + \rho_4^2 = \rho_1 \rho_2$.
It is easy to check that
their Poisson bracket closes, and has two Casimirs $C_1 = \frac12 \mu \rho_1 + \lambda( z-1)  - l = 0$ and $C_2 = \rho_2 + z^2 - 1= 0$. 
Eliminating $\rho_1$ and $\rho_2$ using these Casimirs gives a Poisson structure on $\R^3$ with coordinates 
$(\rho_3, \rho_4, z)$ with Casimir
$C_3 = \rho_3^2 + \rho_4^2 - \tfrac{2}{\mu} ( l -\lambda( z-1)) ( 1- z^2)  = 0$. 
The zero-level of $C_3$ defines the reduced phase space.

To see if and when the reduced space is singular we need to find points
on $\{ (\rho_3, \rho_4, z) : C_3 = 0 \} $ at which the gradient of $C_3$ with respect to $(\rho_3, \rho_4, z)$ vanishes.
This occurs when $\rho_3 = \rho_4 = 0$ and the conditions $C_3 = 0$ and 
$\frac{\partial C_3}{\partial z} = 0$ together then imply $l= 0$ or $l = - 2 \lambda$.
For $l=-2\lambda$ the reduced space is a point at $z=-1$, 
while for $l=0$ the reduced space is an orbifold  in the shape of a balloon with a singular point at $z=1$.
Since $H = \frac{1}{2}\rho_3$ the reduced dynamics is $\dot \rho_3 = 0$ and hence the reduced orbit is given by 
the intersection of the plane $\rho_3 = 2 H = \mathrm{const}$ with the balloon. Thus exactly when $h=0$ and $l=0$ the
reduced orbit contains the conical singularity.
Introducing  $p_2 = l - \lambda(z - 1)$ and $q_2 = \arctan( \tfrac{\rho_3}{\rho_4})$ defines
local coordinates almost everywhere on the orbifold $C_3 = 0$. The final expression for the action $I$ derived 
in the following measures area on this orbifold.

By the Liouville-Arnold theorem, the system \eqref{eq1} has two pairs of action-angle variables near a torus in the preimage of a regular value.
One of the global action variables is $p_1 = l$ and we will denote the other non-trivial action variable by $I$.
Action variables are invariant under symplectic transformations and hence invariant under symplectic reduction obtained by a symplectic 
transformation, as in our case. Thus the action variable of the reduced system \eqref{eqred} gives the non-trivial action $I$ of the original system. Computing action integrals is in principle straightforward, but we always hope that the 
resulting integrals are (complete) Abelian integrals. Using the standard expression $\oint p\,\dee q$ 
to compute the action in our case would require to solve a cubic equation.
The simple trick to instead integrate $\oint q\, \dee p$ is obvious, but does not lead to an
Abelian integral. Similar to the case of the Kovalevskaya top, cf.\ Dullin \& Richter \& Veselov \cite{DRV}, by using integration by 
parts the integral can be turned into an Abelian integral, in the present case in fact a complete elliptic integral.
We start by defining
\begin{equation}
I(l,h) := \dfrac{1}{2\pi} \oint_{\beta_{l,h}} q_2\, \dee p_2,
\label{acti}
\end{equation} where $\beta_{l,h}$ is the curve implicitly defined by $H_{red}(q_2,p_2)=h$. For later convenience we choose the orientation of $\beta_{l,h}$ such that $\frac{\partial I}{\partial h}>0$. In \eqref{acti},  $q_2 = q_2(p_2; l,h)$ is thought as a function of $p_2$ depending on the parameters $l$ and $h$, found from \eqref{eqred}
\begin{equation}
q_2(p_2; l,h) = \pm \arccos \left(  \dfrac{ \lambda\sqrt{2 \mu} h}{\sqrt{-p_2(p_2-l)(p_2-l-2\lambda)}} \right). 
\label{qq}
\end{equation} Since $H_{red}(q_2,p_2)=H_{red}(-q_2,p_2)$ we can restrict the integral to the positive values of $q_2$ and then multiply by 2. In Figure \ref{levels} we can see a figure of the level curves of $H_{red}$. 

\begin{figure}[ht]
 \centering
 \includegraphics[width=12cm]{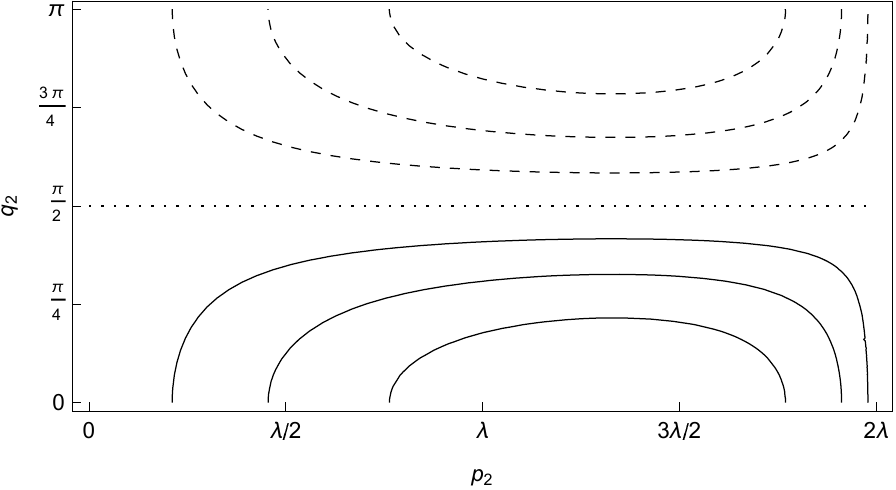}
\caption{\small Representation of some level sets of the function $H_{red}(q_2,p_2)$ with parameters $\lam=1$, $\mu=1$, $l=0$. The continuous lines correspond to positive values of the function and the dashed lines to negative ones. The dotted line is the level set of value $0$.}
 \label{levels}
\end{figure}

\begin{re} The function $H_{red}(q_2,p_2)$ presents also the symmetry
\begin{equation}
H_{red}(q_2+\pi,p_2) = H_{red}(q_2-\pi,p_2) = -H_{red}(q_2,p_2)
\label{tf1}
\end{equation}  which combined with the symmetry with respect to $q_2=0$ leads to
$$ H_{red}(\pi-q_2,p_2) = H_{red}(q_2-\pi,p_2) = -H_{red}(q_2,p_2).$$ In other words, $H_{red}(q_2,p_2)$ is antisymmetric with respect to the line $q_2 = \frac{\pi}{2}$. This symmetry is visible in Figure \ref{levels}. If $I$ is the action corresponding to a certain value $h$ of $H_{red}$, then the action $I'$ corresponding to $-h$ will be
\begin{equation}
I' = \frac{ \text{Area}_l }{2\pi} - I,
\label{I}
\end{equation} where $\text{Area}_l := 2\pi( 2\lam+\min\{0,l\})$ is the area of the phase space \eqref{vol}. For this reason from now on we will assume without loss of generality that $h>0$.
\label{symmet}
\end{re}

\begin{re}
The transformations $T_1, T_2$ defined in \eqref{transf} act on the angles $\theta$, $\varphi$ respectively by substracting $\pi$ if the angles are positive or adding $\pi$ if they are negative, so as to always have angular values in $[-\pi,\pi]$. In a similar fashion, they both act on $q_2$ the same way: they substract or add $\pi$ if $q_2$ is positive or negative respectively. From \eqref{tf1} we see that this changes also the sign of $H_{red}$. Following the previous remark,  $T_1$ and $T_2$ take $I$ to $I'$ and vice versa. \label{symmet2}
\end{re}

The integral \eqref{acti} can be expressed as an integral  of elliptic type and therefore can be calculated in terms of Legendre's standard elliptic integrals. We will use the notations, definitions and results concerning elliptic integrals from section \ref{sec:ellipticInt}.

\begin{theo}
The action integral of the reduced spin-oscillator is given by 
\begin{equation*}
2\pi I = c_1 K(k)+ c_2 \Pi(n_2,k) + c_3 \Pi (n_3,k),
\end{equation*} where $K$ and $\Pi$ are Legendre's complete elliptic integrals of first and third kind respectively. The elliptic modulus and the characteristics are
\begin{equation*}
k^2=\dfrac{\zeta_3-\zeta_2}{\zeta_3-\zeta_1},\qquad n_2 =\dfrac{\zeta_3-\zeta_2}{\zeta_3-l},\qquad n_3 =\dfrac{\zeta_3-\zeta_2}{\zeta_3-l-2\lambda},
\end{equation*} the coefficients are 
\begin{equation*}
c_1 = \dfrac{3\lambda \sqrt{2\mu}\,h}{\sqrt{\zeta_3-\zeta_1}},\quad c_2 = \dfrac{\lambda\sqrt{2\mu}\,h\, l }{\sqrt{\zeta_3-\zeta_1}} \dfrac{1}{\zeta_3-l},\quad c_3= \dfrac{\lambda \sqrt{2 \mu}\,h\,  (l+2\lambda) }{\sqrt{\zeta_3-\zeta_1}} \dfrac{1}{\zeta_3-l-2\lambda}
\end{equation*}
and $\zeta_1 \leq \zeta_2 \leq \zeta_3 $ are the roots of 
\begin{equation*}
P(p_2) =  -\frac{2}{\lambda^2 \mu}p_2 (p_2-l) (p_2-l-2\lambda)-4h^2.
\end{equation*}
\label{theo2}
\end{theo}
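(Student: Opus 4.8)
The plan is to evaluate $2\pi I=\oint_{\beta_{l,h}}q_2\,\dee p_2$ directly from \eqref{acti}--\eqref{qq}, using integration by parts to replace the transcendental integrand $\arccos(\cdots)$ by an algebraic one — the device flagged above in analogy with the Kovalevskaya top. Assuming $h>0$ as permitted by Remark~\ref{symmet}, the equation $H_{red}=h$ forces $\cos q_2=\lambda\sqrt{2\mu}\,h/\sqrt{-p_2(p_2-l)(p_2-l-2\lambda)}\in(0,1]$, so inside the cylinder \eqref{vol} the curve $\beta_{l,h}$ is a contractible loop with two branches $q_2=\pm\arccos(\cdots)$ joined at the turning points $q_2=0$. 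These turning points are exactly the zeros of the cubic $P(p_2)=-\tfrac{2}{\lambda^2\mu}p_2(p_2-l)(p_2-l-2\lambda)-4h^2$, since $q_2=0\iff -p_2(p_2-l)(p_2-l-2\lambda)=2\lambda^2\mu h^2$. Writing $\zeta_1\le\zeta_2\le\zeta_3$ for these roots and examining signs on \eqref{vol} shows that $\zeta_2,\zeta_3$ bound the loop (while $\zeta_1$ lies outside the physical interval); together with $H_{red}(-q_2,p_2)=H_{red}(q_2,p_2)$ this gives
\[
2\pi I=2\int_{\zeta_2}^{\zeta_3}\arccos\!\Bigl(\tfrac{\lambda\sqrt{2\mu}\,h}{\sqrt{-p_2(p_2-l)(p_2-l-2\lambda)}}\Bigr)\dee p_2,
\]
the orientation of $\beta_{l,h}$ being chosen so that $\partial I/\partial h>0$.

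Next I would integrate this by parts with $\dee v=\dee p_2$. The boundary term vanishes because $\arccos(\cdots)=0$ at both endpoints $\zeta_2,\zeta_3$. Setting $g(p_2):=-p_2(p_2-l)(p_2-l-2\lambda)$ and $A:=\lambda\sqrt{2\mu}\,h$, this leaves (up to an explicit prefactor in $h,\lambda,\mu$) an integral of the form $\int_{\zeta_2}^{\zeta_3}\frac{p_2\,g'(p_2)}{g(p_2)\sqrt{g(p_2)-A^2}}\,\dee p_2$. The key identity is $g(p_2)-A^2=\tfrac{\lambda^2\mu}{2}P(p_2)$, which on $[\zeta_2,\zeta_3]$ equals $(p_2-\zeta_1)(p_2-\zeta_2)(\zeta_3-p_2)=:R(p_2)$; hence the integrand is genuinely Abelian (elliptic). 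Since $g$ has simple zeros at $0,\ l,\ l+2\lambda$, partial fractions give
\[
\frac{p_2\,g'(p_2)}{g(p_2)}=3+\frac{l}{p_2-l}+\frac{l+2\lambda}{p_2-l-2\lambda},
\]
so $2\pi I$ becomes a multiple of a first-kind integral $\int_{\zeta_2}^{\zeta_3}\dee p_2/\sqrt{R}$ plus two third-kind integrals $\int_{\zeta_2}^{\zeta_3}\dee p_2/\bigl((p_2-c)\sqrt{R}\bigr)$ with $c=l$ and $c=l+2\lambda$. Note that the zero of $g$ at $p_2=0$ feeds only the constant term, which is precisely why no $\Pi(n_1,k)$-term survives; and these third-kind integrals are finite because $g>0$ on $(\zeta_2,\zeta_3)$ forces $l,\,l+2\lambda\notin[\zeta_2,\zeta_3]$, so no pole sits on the contour.

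Finally I would apply the classical Legendre reductions for the interval between two roots (substitution $p_2=\zeta_3-(\zeta_3-\zeta_2)\sin^2\varphi$): $\int_{\zeta_2}^{\zeta_3}\dee p_2/\sqrt{R}=\tfrac{2}{\sqrt{\zeta_3-\zeta_1}}K(k)$ with $k^2=\tfrac{\zeta_3-\zeta_2}{\zeta_3-\zeta_1}$, and $\int_{\zeta_2}^{\zeta_3}\dee p_2/\bigl((p_2-c)\sqrt{R}\bigr)=\tfrac{2}{(\zeta_3-c)\sqrt{\zeta_3-\zeta_1}}\Pi(n;k)$ with $n=\tfrac{\zeta_3-\zeta_2}{\zeta_3-c}$; taking $c=l$ and $c=l+2\lambda$ yields $n_2,n_3$, and collecting all prefactors produces $c_1,c_2,c_3$ and the stated modulus. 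The main obstacle I anticipate is bookkeeping, not ideas: one has to pin down the ordering $\zeta_1<\zeta_2<\zeta_3$ and the position of $l$ and $l+2\lambda$ relative to it (possibly separating $l>0$, $l=0$, $l<0$), fix the orientation of $\beta_{l,h}$ so the $c_i$ acquire the correct sign, and carry the numerical and $\lambda,\mu$ factors cleanly through both the integration by parts and the Legendre reduction.
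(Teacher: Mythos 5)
Your proposal follows the same route as the paper's proof: integrate $\oint q_2\,\dee p_2$ by parts to kill the $\arccos$, decompose the resulting rational factor via $\frac{p_2 g'(p_2)}{g(p_2)}=3+\frac{l}{p_2-l}+\frac{l+2\lambda}{p_2-l-2\lambda}$, and reduce the three resulting integrals on $[\zeta_2,\zeta_3]$ to Legendre's $K$ and $\Pi$ via the standard substitution $p_2=\zeta_3-(\zeta_3-\zeta_2)\sin^2\varphi$ (the paper writes it as $x=\sqrt{(\zeta_3-p_2)/(\zeta_3-\zeta_2)}$, which is the same change of variables). The only cosmetic difference is that the paper works in rescaled variables $\mfp_2,\mfl,\mfh$ throughout, whereas you keep $\lambda,\mu$ explicit, and both treatments defer the orientation/sign bookkeeping in the same way.
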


In order to make the computations easier to follow, we will make use of the following notation. 

\begin{no}
We introduce the following notation for scaled variables
\begin{equation}
\mfp_1 := \dfrac{1}{\lambda}p_1, \qquad \mfp_2 := \dfrac{1}{\lambda}p_2,
 \label{not1}
\end{equation} 
together with the corresponding scaled functions
\begin{equation}
\mfL := \dfrac{1}{\lambda}L, \qquad \mfH := \sqrt{\dfrac{\mu}{\lambda}}H,\qquad \mfI := \dfrac{1}{\lambda}I
 \label{not2}
\end{equation} 
and function values
\begin{equation}
\mfl := \dfrac{1}{\lambda}l, \qquad \mfh := \sqrt{\dfrac{\mu}{\lambda}}h.
 \label{not3}
\end{equation}
\end{no}

\begin{proof}[Proof of Theorem \ref{theo2}]

In the scaled notation \eqref{not1}-\eqref{not3} the integral \eqref{acti} becomes
\begin{equation}
\mfI(\mfl,\mfh) =\dfrac{1}{2\pi} \oint_{\beta_{\mfl,\mfh}} \arccos \left( \dfrac{\sqrt{2}\, \mfh }{\sqrt{-\mfp_2 (\mfp_2-\mfl) (\mfp_2-\mfl-2)}} \right)d\mfp_2,
\label{act1}
\end{equation} where $\beta_{\mfl,\mfh}$ is the curve implicitly defined by $\mfH(q_2,\mfp_2)=\mfh$.

Integrating by parts using $\frac{d}{dt} \arccos(t) = - \frac{1}{\sqrt{1-t^2}}$, we can rewrite \eqref{act1} as
\begin{align}
\mfI(\mfl,\mfh)&=\dfrac{1}{\pi} \int_{\zeti_2}^{\zeti_3} \dfrac{\mfh({\mfl}^2-4\mfl\mfp_2+3{\mfp_2}^2+2\mfl  -4\mfp_2 )}{\sqrt{2} (\mfl-\mfp_2)(\mfl-\mfp_2+2)} \dfrac{d\mfp_2}{\mfw} \nonumber \\[0.2cm]
&= \dfrac{\mfh}{\pi}    \int_{\zeti_2}^{\zeti_3} \left( 3 +  \dfrac{ \mfl}{(\mfp_2-\mfl)}   + \dfrac{\mfl+2}{(\mfp_2-\mfl-2)}     \right) \dfrac{d\mfp_2}{\mfw} \,.
\label{act4}
\end{align} 
This integral is  an elliptic integral defined on the elliptic curve
\begin{equation}
\Gamma_{\mfl,\mfh}:=\{ (\mfp_2,\mfw): \mfw^2=\mfP(\mfp_2) \}, \qquad \mfP(\mfp_2) := -2\mfp_2 (\mfp_2-\mfl) (\mfp_2-\mfl-2)-4\mfh^2.
\label{curv}
\end{equation}
The roots of  $\mfP$ are denoted by $\zeti_i$, $i=1,\ldots,3$ and they satisfy $\zeti_1 \leq \min\{0,l\}$ and $\max\{0,\mfl\} \leq \zeti_2 \leq \zeti_3 \leq \mfl+2$. The integration takes place  between the two roots that lie in the physical phase space, namely $\zeti_2$ and $\zeti_3$. 

We observe that the last two terms in \eqref{act4} have poles and lead to elliptic integrals of third kind. Their residues, which in this case coincide with the position of the poles, are actions scaled by the global factor $\frac{1}{2\pi}$. The first term corresponds to the $\mathbb{S}^1$-action $l$, and the $2$ in the second term, or $2\lambda$ if we revert the scaling, corresponds to the symplectic area of the sphere $\mathbb{S}^2$.

We can decompose \eqref{act4} into an integral of the form
\begin{equation*}
\mathcal{N}_A := \int_{\zeti_2}^{\zeti_3} \dfrac{d\mfp_2}{\mfw} = \int_{\zeti_2}^{\zeti_3} \dfrac{d\mfp_2}{\sqrt{\mfP(\mfp_2)}}
\end{equation*}  
and two integrals of the form 
\begin{equation*}
\mathcal{N}_{B,\gamma} := \int_{\zeti_2}^{\zeti_3} \dfrac{1}{\mfp_2-\gamma}\dfrac{d\mfp_2}{\mfw} = \int_{\zeti_2}^{\zeti_3} \dfrac{d\mfp_2}{(\mfp_2-\gamma)\sqrt{\mfP(\mfp_2)}},
\end{equation*} where $\gamma$ is a constant. We can rewrite $\mcN_A,\mcN_{B,\gamma}$ in terms of Legendre's standard form by performing a change of integration variable 

\begin{equation}
x := \sqrt{\dfrac{\zeti_3-\mfp_2}{\zeti_3-\zeti_2}}
\end{equation} and defining
\begin{equation}
\mfk^2:=\dfrac{\zeti_3-\zeti_2}{\zeti_3-\zeti_1},\qquad \mfn_\gamma :=\dfrac{\zeti_3-\zeti_2}{\zeti_3-\gamma}.
\end{equation} We obtain $\mathcal{N}_A$ as a complete elliptic integral of first kind
\begin{equation}
\mathcal{N}_A = \dfrac{\sqrt{2} }{\sqrt{\zeti_3-\zeti_1}} \int_0^1 \dfrac{dx}{\sqrt{(1-x^2)(1-\mfk^2 x^2)}} = \dfrac{\sqrt{2}  }{\sqrt{\zeti_3-\zeti_1}} K(\mfk)
\label{Ia}
\end{equation} and $\mathcal{N}_{B,\gamma}$ as a complete integral of third kind
\begin{align}
\mathcal{N}_{B,\gamma} &= \dfrac{\sqrt{2} }{(\zeti_3-\gamma)\sqrt{\zeti_3-\zeti_1}} \int_0^1 \dfrac{dx}{(1-\mfn_\gamma x^2)\sqrt{(1-x^2)(1-\mfk^2 x^2)}}
\nonumber \\  &= \dfrac{\sqrt{2} }{(\zeti_3-\gamma)\sqrt{\zeti_3-\zeti_1}} \Pi(\mfn_\gamma,\mfk).
\label{Ib}
\end{align} We combine now \eqref{act4} with \eqref{Ia} and \eqref{Ib} and transform back to the unscaled variables $p_2, l,h$. This way we obtain the desired result. 
\end{proof}

We now want to understand the behaviour of the elliptic integral $I(l,h)$ around the focus-focus critical value $(0,0)$. The integral $\mfI(\mfl,\mfh)$ in \eqref{act4} can either be understood as a real elliptic integral or as the real $\beta$ cycle of a complex elliptic integral. In unscaled coordinates we can thus write
\begin{equation*}
I(l,h) = \dfrac{1}{2\pi} \oint_{\beta_{l,h}} -h \left( 3 +  \dfrac{ l}{(p_2-l)}   + \dfrac{l+2\lam}{(p_2-l-2\lam)}     \right) \dfrac{dp_2}{w},
\end{equation*} defined on
\begin{equation*}
\Gamma_{l,h}:=\{ (p_2,w): w^2=P(p_2) \}, \qquad P(p_2) =  -\frac{2}{\lambda^2 \mu}p_2 (p_2-l) (p_2-l-2\lambda)-4h^2.
\end{equation*}It is known from complex analysis that when expanding an elliptic integral as a series, the integral along the imaginary or vanising $\alpha$ cycle appears in front of a logarithmic term. To this end, it is convenient to define the {\em `imaginary action'}:  the elliptic integral of the action but this time integrated along the imaginary $\alpha$ cycle. In unscaled coordinates it takes the form
\begin{equation}
J(l,h) := \dfrac{1}{2\pi i} \oint_{\alpha_{l,h}} h \left( 3 +  \dfrac{ l}{(p_2-l)}   + \dfrac{l+2\lam}{(p_2-l-2\lam)}     \right) \dfrac{dp_2}{w} \,.
\label{eqJ}
\end{equation}where $i$ is the imaginary unit. We denote by $\mfJ:=\frac{1}{\lambda}J$ its scaled counterpart. 
Here the $\alpha$ cycle needs to be defined using a cycle in the original coordinates, 
and because of the transformation $p_2 = u^2 + v^2$ the original cycle is wrapped around twice. 
\begin{lemm}
The series expansion of the action integral $I(l,h)$ of the spin-oscillator as a function of the angular momentum value $l$ and the energy value $h$ is
\begin{align}
2\pi &I(l,h)= 2\lambda\pi   +\dfrac{\pi}{2} l + l \arctan \left( \dfrac{l}{2\sqrt{\lam \mu}\,h} \right) \nonumber + J(l,h) \log \dfrac{32\lambda}{\sqrt{{l}^2+4\lam\mu  h^2}} \nonumber\\
&+ 2\sqrt{\lam\mu}h +	\dfrac{\sqrt{\lam\mu}}{2\lambda} lh  - \dfrac{\sqrt{\lam\mu}}{384\lambda^2({l}^2+4\lam\mu\,  h^2)} h
\left(63l^4+412\lam\mu l^2h^2 +544\lam^2\mu^2 h^4 \right)  \label{act2}\\
&+\dfrac{\sqrt{\lam\mu}}{3072\lambda^3({l}^2+4\lam\mu\,  h^2)^2}lh(185l^6 + 2668 \lam \mu l^4 h^2 + 12176 \lam^2 \mu^2 l^2h^4+18112 \lam^3\mu^3 h^6)+...\nonumber
\end{align} 
where the scaled imaginary action $J(l,h)$ has the expansion
\begin{align}
\dfrac{1}{\sqrt{\lam \mu}} J(l,h) &= 2h - \dfrac{1}{4\lambda} l h  +\dfrac{1}{128\lambda^2 } h (9{l}^2+20\lam\mu h^2) - \dfrac{5}{1024\lambda^3}lh \left(5l^2+28\lam\mu h^2  \right) \nonumber \\
&+\dfrac{7}{131072\lambda^4}h \left( 175l^4+1800\lam\mu l^2h^2+1584\lam^2\mu^2 h^4 \right)  +...
\label{expJ}
\end{align}
\label{act3}
\end{lemm}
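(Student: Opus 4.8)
The plan is to compute the two elliptic integrals defining $I(l,h)$ and $J(l,h)$ by reducing everything, via Theorem \ref{theo2}, to the complete elliptic integrals $K$, $E$, $\Pi$, and then to expand these in the regime where the critical value $(l,h)=(0,0)$ is approached. The key observation is that near the focus-focus point the two roots $\zeti_2,\zeti_3$ of $\mfP$ coalesce (this is exactly the vanishing of the $\alpha$-cycle), so the modulus $\mfk$ tends to $0$ while $\mfn_2,\mfn_3$ stay bounded. Hence I would first locate the roots $\zeti_1\le\zeti_2\le\zeti_3$ of $\mfP(\mfp_2)=-2\mfp_2(\mfp_2-\mfl)(\mfp_2-\mfl-2)-4\mfh^2$ as convergent Puiseux/Taylor series in $\mfl$ and $\mfh$ about the degenerate configuration $\{0,0,2\}$ at $(\mfl,\mfh)=(0,0)$; since the relevant combination is $\sqrt{\mfl^2+4\mfh^2}$ (cf. the $\arctan$ and $\log$ arguments in \eqref{act2}), I expect $\zeti_2,\zeti_3 = \tfrac12(\mfl\pm\sqrt{\mfl^2+4\mfh^2})+O(\text{cubic})$ and $\zeti_3-\zeti_2 = \sqrt{\mfl^2+4\mfh^2}\,(1+\cdots)$, with $\zeti_3-\zeti_1 = 2 + \cdots$. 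From these I get $\mfk^2=(\zeti_3-\zeti_2)/(\zeti_3-\zeti_1)$, $\mfn_\gamma=(\zeti_3-\zeti_2)/(\zeti_3-\gamma)$ for $\gamma=\mfl,\mfl+2$ as explicit power series.

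Next I would substitute the standard small-modulus expansions $K(\mfk)=\tfrac{\pi}{2}(1+\tfrac14\mfk^2+\tfrac{9}{64}\mfk^4+\cdots)$ and $\Pi(\mfn,\mfk)=\tfrac{\pi}{2}\big(1+\tfrac12\mfn + \cdots\big)/\sqrt{1-\mfn} + \cdots$ (more precisely I would use the exact closed form of $\Pi(\mfn,0)=\tfrac{\pi}{2}(1-\mfn)^{-1/2}$ and its $\mfk^2$-corrections), together with the coefficient series for $c_1,c_2,c_3$ from Theorem \ref{theo2}, and collect terms. The logarithm in \eqref{act2} must be produced by hand: when $\mfn\to1$ (which happens here because $\zeti_3-\mfl$ and $\zeti_3-\mfl-2$ are themselves $O(\sqrt{\mfl^2+4\mfh^2})$ relative to $\zeti_3-\zeti_2$ in the appropriate combinations), $\Pi(\mfn,\mfk)$ develops a $\log$ singularity, and this is precisely the mechanism that yields the $J\log(\cdots)$ term. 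For $J(l,h)$ I would do the analogous computation on the $\alpha$-cycle: by Theorem \ref{theo2}'s structure the $\alpha$-period of the same integrand is $c_1 K(\mfk')+\cdots$ with complementary modulus $\mfk'^2=1-\mfk^2\to1$, and I would instead expand directly: $J$ is a genuine (non-logarithmic) analytic function of $(l,h)$ near $0$, in fact an integral over the small vanishing cycle, so $J/\sqrt{\lambda\mu}$ is a power series in $l,h$; I would extract its coefficients either from the Legendre-form residue computation or, more cleanly, from the identity that the full period integral $I + \tfrac{1}{i\pi}J\log(\cdots) + (\text{analytic})$ must be single-valued.

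A cleaner route for the logarithmic bookkeeping, which I would actually use, is the following: write $w=\sqrt{P(p_2)}$ and split the rational factor $3+\tfrac{l}{p_2-l}+\tfrac{l+2\lambda}{p_2-l-2\lambda}$; the term $3\,dp_2/w$ has no pole, the other two have simple poles at $p_2=l$ and $p_2=l+2\lambda$, which are outside the shrinking interval $[\zeti_2,\zeti_3]$ but collide with $\zeti_2$ or $\zeti_3$ in the limit. Using the general fact (and the references Siegel \cite{Si1,Si2}, Bliss \cite{Bl}) that an elliptic integral of the third kind with a pole approaching an endpoint of the cut behaves like $-\tfrac{(\text{residue})}{(\text{leading coeff of }w)}\log(\text{endpoint separation}) + \text{analytic}$, I can read off that the coefficient of the logarithm is exactly $\tfrac{1}{2\pi}$ times the $\alpha$-period $J(l,h)$ — matching the general theory of V\~u Ng\d{o}c \cite{Vu1} recalled in \eqref{inv5} — and that the argument of the logarithm is governed by $\sqrt{l^2+4\lambda\mu h^2}$ and the value $32\lambda$ coming from the constant $\zeti_3-\zeti_1\to 2$ (unscaled $2\lambda$) and the Jacobian factors $\sqrt{2}/\sqrt{\zeti_3-\zeti_1}$. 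I expect the main obstacle to be purely computational: carrying the Puiseux expansions of the roots, and of $K$ and $\Pi$, to high enough order (fourth order in $(l,h)$ for \eqref{act2}) while correctly separating the analytic part from the logarithmic part, and fixing every numerical constant — in particular the $32\lambda$ inside the log and the precise rational coefficients $63,412,544,\ldots$ and $185,2668,\ldots$. This is exactly the kind of bookkeeping done for the spherical pendulum in Dullin \cite{Du}, and I would organize it by first doing everything in the scaled variables $\mfl,\mfh$ with $\lambda=\mu=1$ and restoring $\lambda,\mu$ at the end via \eqref{not1}-\eqref{not3}; a symbolic computation (the paper notes Figures were made in \emph{Mathematica}) would be used to verify the final coefficients.
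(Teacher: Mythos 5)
Your overall strategy (express the action via Theorem \ref{theo2}, expand the roots of $\mfP$ near the degenerate configuration, then expand $K$ and $\Pi$) is the same as the paper's, but you have the singular limit of the modulus exactly backwards, and this is not a minor slip — it determines which asymptotic expansion of the Legendre integrals applies and hence whether you see a logarithm at all.

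Near $(\mfl,\mfh)=(0,0)$ the polynomial is $\mfP(\mfp_2)\to -2\mfp_2^2(\mfp_2-2)$, so the \emph{double} root is at $0$ and the simple root at $2$. The roots that coalesce are $\zeti_1$ and $\zeti_2$ (both tend to $0$; they are $\tfrac12(\mfl\mp\sqrt{\mfl^2+4\mfh^2})+\cdots$), while $\zeti_3\to 2$. The integration interval $[\zeti_2,\zeti_3]$ of the $\beta$-cycle therefore does \emph{not} shrink; it is the $\alpha$-cycle, encircling $\zeti_1$ and $\zeti_2$, that vanishes. Hence $\mfk^2=(\zeti_3-\zeti_2)/(\zeti_3-\zeti_1)\to 1$, not $0$, and likewise $\mfn_2\to1$ while $\mfn_3\to-\infty$. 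Your identification ``$\zeti_2,\zeti_3$ coalesce, so $\mfk\to0$'' and the proposed expansion $K(\mfk)=\tfrac{\pi}{2}(1+\tfrac14\mfk^2+\cdots)$ are in the wrong regime: that small-modulus series is analytic, with no logarithm, and would never produce \eqref{act2}. What you need is the complementary-modulus ($\mfk'\to0$) expansion, in which $K(\mfk)\sim-\tfrac12\log\frac{1-\mfk^2}{16}+\cdots$; this is where the $\log\frac{32\lambda}{\sqrt{l^2+4\lambda\mu h^2}}$ actually originates, jointly with the corresponding singular behaviour of the third-kind integrals. Your remark that the $\log$ ``must be produced by hand'' because $\mfn\to1$ catches part of the phenomenon, but misattributes the source: the logarithmic singularity is shared by $K$ and by the $\Pi$'s, and all of them must be expanded consistently in $1-\mfk^2$.

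The paper organizes exactly this: after substituting $\mfl\mapsto\mfl\varepsilon$, $\mfh\mapsto\mfh\varepsilon$, it expands the three roots, finds $\mfk^2=1-\tfrac12\sqrt{\mfl^2+4\mfh^2}\,\varepsilon+\cdots$, notes that both $\mfn_2$ (positive circular case, $k^2<n<1$) and $\mfn_3$ (negative circular case, $n<0$) fall into the ``circular'' cases, and rewrites $\Pi(n,k)$ in terms of $K(k)$ and Heuman's lambda function $\Lambda_0(\vartheta,k)$, whose $\mfk\to1$ expansions carry the $\log\tfrac{1-k^2}{16}$ factors. Everything is then collected order by order. Your ``cleaner route'' paragraph (partial fractions, the general principle that the coefficient of the logarithm is the $\alpha$-period $J$, and that $J$ is analytic in $(l,h)$) is correct in spirit and consistent with the paper's point of view; but as a complete proof it still requires the $\mfk\to1$ asymptotics to pin down the constants, and your stated ``primary'' computational route would not get there. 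Before proceeding you should fix the root labelling and switch to the $\mfk'\to0$ expansions.
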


\begin{proof}

The procedure is similar to the one used by Dullin \cite{Du}. The idea is to expand Legendre's elliptic integrals in series in the singular limit of the modulus $\mfk\to1$ and then substitute the modulus and the parameters with their series expansions around the focus-focus critical value $(\mfl,\mfh)=(0,0)$. To do so, we set  $\mfl \mapsto \mfl \varepsilon$ and $\mfh \mapsto \mfh\varepsilon$  and we compute the Taylor expansion in $\varepsilon$. We start with the roots of $\mfP(\mfp_2)$:
\begin{align*}
\zeti_1 &= \dfrac{1}{2} \left( \mfl - \sqrt{{\mfl}^2+4 \mfh^2} \right)\eps + \dfrac{1}{4} \mfh^2 \left( 1+ \dfrac{\mfl}{\sqrt{{\mfl}^2+4 \mfh^2}} \right)\eps^2 + \mathcal{O}(\varepsilon^3), \\
\zeti_2 &= \dfrac{1}{2} \left( \mfl + \sqrt{{\mfl}^2+4 \mfh^2} \right)\eps + \dfrac{1}{4} \mfh^2 \left( 1- \dfrac{\mfl}{\sqrt{{\mfl}^2+4 \mfh^2}} \right)\eps^2 + \mathcal{O}(\varepsilon^3), \\
\zeti_3 &= 2+ \mfl\eps - \dfrac{1}{2} \mfh^2\eps^2 + \mathcal{O}(\varepsilon^3)
\end{align*} and from here we can calculate the modulus and the parameters
\begin{align*}
\mfk^2 &= 1-\dfrac{\sqrt{{\mfl}^2+4 h^2}}{2}\eps + \dfrac{1}{8} \left({\mfl}^2+4 \mfh^2+ \dfrac{{\mfl}^3+6 \mfh^2 \mfl}{\sqrt{{\mfl}^2+4 \mfh^2}} \right)\eps^2 + \mathcal{O}(\varepsilon^3), \\
\mfn_2 &= 1+ \dfrac{\mfl - \sqrt{{\mfl}^2+4 \mfh^2}}{4}\eps - \dfrac{1}{8 } \left( 1- \dfrac{\mfl}{\sqrt{{\mfl}^2+4 \mfh^2}} \right) \eps^2 + \mathcal{O}(\varepsilon^3), \\
\mfn_3 &=- \dfrac{4 }{ \mfh^2\eps^2} + \dfrac{-3\mfl +\sqrt{{\mfl}^2+4\mfh^2} }{ \mfh^2\eps} + \mathcal{O}(\varepsilon^0).
\end{align*}

Both $\mfn_2$ and $\mfn_3$ belong to the so-called \emph{circular case}, i.e., when either $k^2 < n < 1$ or $n<0$, cf.\ Cayley \cite{Ca}. This means that we can rewrite the complete elliptic integral of third kind in terms of Heuman's lambda function $\Lambda_0$ defined by
\begin{equation}
\Lambda_0(\vartheta,k) := \dfrac{2}{\pi} \left( E(k) F(\vartheta,k') + K(k)E(\vartheta,k')-K(k)F(\vartheta,k') \right),
\end{equation} 
where $E$ is the elliptic integral of second kind, $F$ is the incomplete elliptic integral of first kind and $k' = \sqrt{1-k^2}$ (see for more details Abramowitz \& Stegun \cite{Ab1} and Byrd \& Friedman \cite{BF}). More precisely, $\mfn_2$ belongs to the positive circular case ($k^2 < n < 1$), where  
\begin{equation*}
\Pi(n,k) = K(k) + \dfrac{\pi}{2} \sqrt{\dfrac{n}{(1-n)(n-k)}} \left( 1-\Lambda_0(\vartheta,k) \right),\quad \vartheta = \arcsin \sqrt{\dfrac{1-n}{n-k}} 
\end{equation*} 
and $\mfn_3$ belongs to the negative circular case ($n<0$), where
\begin{equation*}
\Pi(n,k) = \dfrac{1}{1-n}K(k) + \dfrac{\pi}{2} \sqrt{\dfrac{n}{(1-n)(n-k)}} \left( 1-\Lambda_0(\vartheta,k) \right),\quad \vartheta = \arcsin \dfrac{1}{\sqrt{-n}}.
\end{equation*} 
Finally we expand the functions
\begin{align*}
K(k) &= \dfrac{1}{4}(k^2-1)- \dfrac{21}{128} (k^2-1)^2 + \dfrac{185}{1536} (k^2-1)^3 + ... \\
 &+ \left( -\dfrac{1}{2} +\dfrac{1}{8}(k^2-1) - \dfrac{9}{128} (k^2-1)^2 - \dfrac{25}{512}(k^2-1)^3 + ... \right) \log \left( \dfrac{1-k^2}{16} \right)\\
 \Lambda_0(\vartheta,k) &= \dfrac{2}{\pi} \vartheta + \left(  \dfrac{1}{2\pi}(k^2-1) - \left( \dfrac{13}{32\pi} + \dfrac{3\sin^2 \vartheta}{16\pi} \right)(k^2-1)^2  + ...\right) \sin \vartheta \cos \vartheta \\
 &+ \left(  \dfrac{1}{2\pi}(k^2-1) - \left( \dfrac{3}{16\pi} + \dfrac{\sin^2 \vartheta}{8\pi} \right)(k^2-1)^2  + ...\right) \sin \vartheta \cos \vartheta \log \left( \dfrac{1-k^2}{16} \right).
\end{align*} Substituting the modulus and the angles we eventually obtain
\begin{align}
2\pi &\mfI(\mfl,\mfh)= 2\pi   +\dfrac{\pi}{2} \mfl + \mfl \arctan \left( \dfrac{\mfl}{2\mfh} \right) \nonumber + \mfJ(\mfl,\mfh) \log \dfrac{32}{\sqrt{{\mfl}^2+4 \mfh^2}} \nonumber\\
&+ 2\mfh +	\dfrac{1}{2} \mfl\mfh  - \dfrac{1}{384({\mfl}^2+4  \mfh^2)} \mfh
\left(63\mfl^4+412\mfl^2\mfh^2 +544 \mfh^4 \right) \\
&+\dfrac{1}{3072({\mfl}^2+4 \mfh^2)^2}\mfl\mfh(185\mfl^6 + 2668  \mfl^4 \mfh^2 + 12176  \mfl^2\mfh^4+18112 \mfh^6)+...\nonumber
\end{align} 
where the scaled imaginary action $\mfJ(\mfl,\mfh)$ has the expansion
\begin{align}
\mfJ(\mfl,\mfh) &= 2\mfh - \dfrac{1}{4} \mfl \mfh  +\dfrac{1}{128 } \mfh (9{\mfl}^2+20 \mfh^2) - \dfrac{5}{1024}\mfl\mfh \left(5\mfl^2+28\mfh^2  \right) \nonumber \\
&+\dfrac{7}{131072}\mfh \left( 175\mfl^4+1800 \mfl^2\mfh^2+1584\mfh^4 \right)  +...
\label{expJ2}
\end{align}
Reverting the scaling of variables we obtain \eqref{act2}.
\end{proof}


\section{Calculation of the Taylor series invariant}

We want to extract the symplectic invariant from the expression \eqref{inv5}. In the previous section we have computed the action as a function of $h$ and $l$. In order to eliminate $h$ the Birkhoff normal form 
needs to be computed, which allows to express $h$ as a function of $j$ and $l$. Here $j$ and $l$ are the semi-global extensions of $Q_1$ and $Q_2$, respectively, as defined in  \S \ref{sec:deftaylor}.

The observation that the Birkhoff normal form can be computed by inversion of an elliptic integral was first made in Dullin \cite{Du}. This integral is defined on the same curve $\Gamma$, but computed along a different cycle. In fact it is the imaginary action $J(l,h)$ that we have already defined. Here we are going to compute the imaginary action $J(l,h)$ by direct expansion and using residue calculus. Combining this with Lemma \ref{act3}, we will obtain the desired result. 

\begin{lemm}
\label{lact}
The  Birkhoff normal form of the focus-focus point of the spin-oscillator is 
\begin{align}
\sqrt{\lam\mu}\,B(j,l)&=\dfrac{1 }{2 }j +\dfrac{ 1}{16\lambda}{j}{l} -\dfrac{5}{512\lambda^2}j\left({j}^2+{l}^2\right)+\dfrac{ 1}{4096\lambda^3 } j\left(15 {j}^2{l}+11  {l}^3\right)\nonumber \\
& -\dfrac{ 1}{524288\lambda^4}j\left(393 {j}^4+990 {j}^2 {l}^2+469  {l}^4\right)+...
\label{Birk}
\end{align}
\end{lemm}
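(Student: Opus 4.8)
The plan is to produce the Birkhoff normal form $B$ by inverting the imaginary action $J(l,h)$ of \eqref{eqJ}. The key input is the general fact, due to V\~u Ng\d{o}c \cite{Vu1} and used by Dullin \cite{Du}, that on a regular fibre $F^{-1}(l,h)$ the value $j$ of the semi-global extension $\Phi_1$ of Eliasson's function $Q_1$ equals the symplectic action carried by the vanishing cycle of that fibre, which---after the integration by parts performed in \S\ref{sec3}---is precisely $J(l,h)$; thus $j = J(l,h)$. Since moreover $\Phi_2 = L$, so that $\Phi_1 = \phi_1(L,H)$ with $j = \phi_1(l,h)$, the Birkhoff normal form $B(j,l)$ is by definition the solution for $h$ of $\phi_1(l,h) = j$. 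The lemma therefore reduces to inverting $h\mapsto J(l,h)$ near the origin.

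First I would (re)compute $J(l,h)$ directly by residue calculus, the expansion \eqref{expJ} having otherwise only appeared as a by-product in Lemma \ref{act3}. The integrand of \eqref{eqJ} is a rational one-form on the elliptic curve $\Gamma_{l,h}$, and its $\alpha$-period is evaluated in the pinching limit $(l,h)\to(0,0)$: rescaling $l\mapsto l\varepsilon$, $h\mapsto h\varepsilon$ as in the proof of Lemma \ref{act3} (so that $p_2 = \mathcal{O}(\varepsilon)$ on the vanishing cycle), the curve degenerates at leading order to a rational curve with $w^2$ proportional to $p_2^2 - l p_2 - h^2$, the cycle $\alpha$ shrinks to a small loop around the two colliding roots, and $\dee p_2/w$ develops a simple pole at the resulting node, while the factor $3 + \tfrac{l}{p_2-l}+\tfrac{l+2\lambda}{p_2-l-2\lambda}$ and the analytic part of $w$ are expanded in powers of $p_2$ and $\varepsilon$. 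Keeping in mind that the cycle is traversed \emph{twice} because of the substitution $p_2 = u^2+v^2$, and summing at each order the residues at $p_2 = \infty$ and at the simple poles $p_2 = l$, $p_2 = l+2\lambda$ of the integrand, one recovers \eqref{expJ}; combining with Lemma \ref{act3} gives $J(l,h)$ to the order needed.

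Then, since $\partial J/\partial h(0,0) = 2\sqrt{\lambda\mu}\neq 0$, the implicit function theorem (or formal Lagrange inversion) yields a unique series $h = B(j,l)$ with $B(0,l)=0$ satisfying $J(l,B(j,l))=j$; substituting \eqref{expJ} and inverting order by order produces \eqref{Birk}, the leading term $\sqrt{\lambda\mu}\,B = \tfrac{1}{2}j$ coming from $j = 2\sqrt{\lambda\mu}\,h + \dots$. Note that $B(j,l)$ contains only odd powers of $j$: this follows because $J(l,h)$ is odd in $h$---in \eqref{eqJ} the dependence on $h$ is only through the overall factor $h$ and through $h^2$ (via $P$ and its roots $\zeta_i$)---equivalently from the symmetries $T_1,T_2$ of \eqref{transf}, which preserve the foliation but reverse the sign of $H$.

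The step I expect to be the main obstacle is the residue computation of $J$ in the pinching limit: one must carefully track the pole at $p_2 = l$, which merges with the node as $l\to 0$ and hence contributes only at subleading orders in $\varepsilon$, and correctly account for the double covering of the vanishing cycle coming from $p_2 = u^2+v^2$. Once $J(l,h)$ is in hand, the order-by-order inversion leading to \eqref{Birk} is routine, if somewhat lengthy.
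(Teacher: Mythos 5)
Your overall strategy is the paper's: identify $j$ (the value of Eliasson's $Q_1$) with the imaginary action $J(l,h)$ over the vanishing $\alpha$-cycle, expand $J$ in the pinching limit by residue calculus, and then invert $h\mapsto J(l,h)$ order by order to produce $B(j,l)$; the inversion step, the use of $\partial J/\partial h(0,0)\neq 0$, and the oddness of $B$ in $j$ via $J(l,-h)=-J(l,h)$ are all as in the paper. Where you diverge is in the mechanics of the residue computation, and your description as stated would not go through cleanly. The paper rescales only $\mfl\mapsto\mfl\varepsilon$, $\mfh\mapsto\mfh\varepsilon$ while leaving $\mfp_2$ unscaled; then the branch points $\zeti_1,\zeti_2$ \emph{and} the pole at $\mfp_2=\mfl\varepsilon$ all collapse onto $\mfp_2=0$ in the $\varepsilon$-expanded integrand \eqref{expans}, and $\mfJ$ is read off as the single residue at $\mfp_2=0$. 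You rescale $p_2$ as well, in which case the pole at $p_2=l+2\lambda$ is pushed out to infinity and merges with $\infty$, so it cannot be listed as a separate finite residue; moreover, in the rescaled genus-zero picture the pole $p_2=l$ lies between the two colliding branch points and its residue requires a consistent choice of sheet of $w$ relative to the $\alpha$-cycle, none of which you pin down. Your enumeration "residues at $\infty$, $l$, $l+2\lambda$" therefore does not cohere as written. That said, you also note (and the paper recommends) the shortcut of reading \eqref{expJ} off the log-coefficient already computed in Lemma \ref{act3}, after which the order-by-order inversion to \eqref{Birk} is routine, so the lemma is recoverable with your plan once the residue step is replaced by the paper's cleaner single-residue-at-$p_2=0$ version or bypassed entirely.
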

\begin{proof}
We want to obtain the energy $\mfh$ as a function of the value of the imaginary action $\mfj$ and the value of the angular momentum $\mfl$. We will express this result as a Taylor series in $\mfj$ and $\mfl$. This is a modification of the classical Birkhoff normal form adapted to focus-focus points. We start by calculating the expansion of $\mfJ(\mfh,\mfl)$ around the origin and afterwards we invert it, forgetting the dependence on $\mfl$.  Following Dullin \cite{Du}, we can obtain the series expansion of $\mfJ(\mfh,\mfl)$ either from \eqref{expJ2}, the coefficient  of the logarithmic term in Lemma \ref{act3}, or  directly from the definition \eqref{eqJ}. It is obviously faster to just use the first option but we will briefly illustrate the second option in order to get a better insight into the problem.

The imaginary $\alpha$ cycle vanishes when we approach the singular fibre. By the residue theorem of complex analysis, this means that only the values of the integrand close to $(\mfl,\mfh)=(0,0)$ matter. So we can expand the integrand of \eqref{eqJ} by writing it in scaled coordinates and making again the substitutions $\mfh \mapsto \mfh \varepsilon$, $\mfl \mapsto  \mfl \varepsilon$, which leaves
\begin{align}
 &-\dfrac{ \sqrt{2} \mfh(3\mfp_2-4)}{{\mfp_2}(2-\mfp_2)^{3/2}}\varepsilon 
+\dfrac{\sqrt{2} \mfh \mfl (5{\mfp_2}^2-11\mfp_2+8)}{{\mfp_2}^2(2-\mfp_2)^{5/2}} \varepsilon^2 \nonumber \\[0.2cm]
&+ \dfrac{2\mfh^3(3{\mfp_2}^2-10\mfp_2+8)+\mfh \mfl^2(-14{\mfp_2}^3+44{\mfp_2}^2-65\mfp_2+36)}{\sqrt{2}{\mfp_2}^3(2-\mfp_2)^{7/2}} \varepsilon^3 + ... 
\label{expans}
\end{align}
$\mfJ$ is the residue of the integrand at the pole $p_2=0$. So by calculating the residue of \eqref{expans} at the origin, we recover \eqref{expJ}. Once we have the series expansion of $\mfJ(\mfl,\mfh)$ we only need to invert it regarding $\mfl$ as a constant. In other words, we fix the values of $\mfj=\mfJ(\mfl,\mfh)$ and $\mfl$, and then solve term by term to get $\mfh$ as a function of $\mfj$ and $\mfl$. This inversion gives us 
\begin{align}
\mfB(\mfj,\mfl)&=\dfrac{1 }{2 }\mfj +\dfrac{ 1}{16}{\mfj}{\mfl} -\dfrac{5}{512}\mfj\left({\mfj}^2+{\mfl}^2\right) +\dfrac{ 1}{4096 } \mfj\left(15 {\mfj}^2{\mfl}+11  {\mfl}^3\right)\nonumber \\
& -\dfrac{ \mfj\left(393 {\mfj}^4+990 {\mfj}^2 {\mfl}^2+469  {\mfl}^4\right)}{524288}+...
\label{Birks}
\end{align} and reverting the scaling of variables we obtain \eqref{Birk}.
\end{proof}

\begin{re} The dependence of the imaginary action $J(l,h)$ on $h$ comes from a global linear factor $h$ and through nonlinear dependence on $h^2$ as we see in \eqref{eqJ}, so $J$ is an odd function of $h$: $J(l,-h) = -J(l,h)$. This is also reflected in the normal form $B$. This property is kept when we do the inversion, so we have $B(-j,l)=-B(j,l)$.
\end{re}

We define $z:=j+il$, where $i$ is the imaginary unit. In section \ref{sec:deftaylor} we saw that the area integral
\begin{equation*}
\mcA (z) = 2\pi I(l,B(j,l))
\end{equation*} satisfies 
\begin{equation*}
 \mathcal{A}(z) = \mathcal{A}_0 - \Re(z \log z-z) + S(l,j).
\end{equation*}  

\begin{theo}
\label{inv1}
The series expansion of the area integral $\mcA(z)$ of the spin-oscillator as a function of the value $j$ of the imaginary action and $l$ of the angular momentum, where $z=j+il$, is
\begin{equation}
 \mathcal{A} (z) = 2  \pi\lambda  -j \log |z| +j+l \arg (z)+    S(j,l)
 \label{act7}
\end{equation} where the Taylor series invariant $S(j,l)$ is
\begin{align}
S(j,l) &=  (5 \log 2+ \log \lambda) j +\dfrac{\pi}{2} l+ \dfrac{1}{4\lambda} jl - \dfrac{1}{768\lambda^2}j (34 {j}^2+39 {l}^2 ) + \dfrac{1}{1536 \lambda^3}j(23 {l}^3 + 34l {j}^2 ) \nonumber \\
&- \dfrac{1}{2621440 \lambda^4}j (10727 {j}^4+30620 {j}^2{l}^2+13505 {l}^4) + ...
\label{invar}
\end{align}
\end{theo}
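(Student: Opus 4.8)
First I would dispose of \eqref{act7}, which is immediate from \eqref{inv5}: since $B(0,0)=0$ by \eqref{Birk}, the constant is $\mcA(0)=2\pi I(0,0)=2\pi\lambda$, the constant term of \eqref{act2}, and writing $\Re(z\log z-z)=j\log|z|-l\arg z-j$ turns \eqref{inv5} into \eqref{act7}. The real task is then the Taylor expansion at the origin of
\[
S(j,l)\;=\;2\pi I\!\big(l,B(j,l)\big)\;-\;2\pi\lambda\;+\;j\log|z|\;-\;l\arg z\;-\;j,\qquad z=j+il,
\]
where $I$ is the action of Lemma \ref{act3} and $h=B(j,l)$ is the Birkhoff normal form of Lemma \ref{lact}. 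The structural fact I would exploit is that $B(\,\cdot\,,l)$ is, by construction, the functional inverse of $h\mapsto J(l,h)$, so $J\!\big(l,B(j,l)\big)=j$ identically as formal power series; hence, after substituting $h=B(j,l)$, the coefficient of the logarithm in \eqref{act2} becomes exactly $j$.

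The first computational step is to write the two singular terms of \eqref{act2} in closed complex form. With $\zeta:=2\sqrt{\lambda\mu}\,h+il$ one has, for $h>0$, $\arctan\!\big(l/(2\sqrt{\lambda\mu}\,h)\big)=\Im\log\zeta$ and $\tfrac12\log(l^{2}+4\lambda\mu h^{2})=\Re\log\zeta$, so after the substitution $h=B(j,l)$ (with $J=j$) the sum of the $\arctan$-term and the logarithm in \eqref{act2} equals $l\,\Im\log\zeta-j\,\Re\log\zeta+j\log(32\lambda)$. By \eqref{Birk}, $2\sqrt{\lambda\mu}\,B(j,l)=j+\delta(j,l)$ with $\delta=\tfrac{1}{8\lambda}jl-\tfrac{5}{256\lambda^{2}}j(j^{2}+l^{2})+\cdots$ a real power series of order $\ge 2$, so $\zeta=z+\delta$. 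Adding $j\log|z|-l\arg z-j$ from the definition of $S$ together with the tail of \eqref{act2} (whose leading term $2\sqrt{\lambda\mu}\,h$ becomes $j+\delta$), the logarithm and argument pieces combine into $-\Re\!\big(z\log(1+\delta/z)\big)$ and the resulting stray $j$- and $\delta$-terms cancel, leaving
\[
S(j,l)\;=\;(5\log 2+\log\lambda)\,j\;+\;\tfrac{\pi}{2}\,l\;+\;\mcR(j,l)\;+\;\tfrac12\Re\!\big(\delta^{2}/z\big)\;-\;\tfrac13\Re\!\big(\delta^{3}/z^{2}\big)\;+\;\cdots,
\]
where $\mcR(j,l)$ is the part of the tail of \eqref{act2} beyond $2\sqrt{\lambda\mu}\,h$, evaluated at $h=B(j,l)$, and the trailing terms come from $z\log(1+\delta/z)=\delta-\tfrac{\delta^{2}}{2z}+\tfrac{\delta^{3}}{3z^{2}}-\cdots$.

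Next I would expand every piece as a formal power series in $(j,l)$ to the order required for \eqref{invar}. The terms of $\mcR$ carry the denominator $l^{2}+4\lambda\mu B(j,l)^{2}=|z|^{2}+2j\delta+\delta^{2}$ and the trailing terms carry powers of $z$, so both produce rational homogeneous expressions that are not polynomials; since $S$ is smooth by V\~u Ng\d{o}c \cite{Vu1}, these must cancel order by order, which serves as a built-in consistency check. For instance, at third order the non-polynomial $\mcR$-contribution $-\tfrac{1}{128\lambda^{2}}\,j^{3}l^{2}/(j^{2}+l^{2})$ is exactly annihilated by $\tfrac12\Re(\delta^{2}/z)$, while the polynomial remnants add up to $-\tfrac{1}{768\lambda^{2}}j(34j^{2}+39l^{2})$. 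Collecting the surviving polynomial terms yields \eqref{invar}; in practice it is convenient to run the computation in the scaled variables \eqref{not1}--\eqref{not3} and undo the scaling at the end, in the manner of Dullin \cite{Du}.

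The main obstacle is computational stamina rather than new ideas: the series of Lemma \ref{act3} and Lemma \ref{lact} and their composition must be carried several orders past the truncations displayed above, and the cancellation of the rational pieces must be bookkept carefully. A minor point is the choice of branch of the complex logarithm: V\~u Ng\d{o}c's normalisation requires $\frac{\partial S}{\partial l}(0,0)\in[0,2\pi[$, which holds automatically here because the linear-in-$l$ part of $S$ equals $\tfrac{\pi}{2}l$.
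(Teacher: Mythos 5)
Your proposal is correct and follows the same route as the paper, which disposes of Theorem~\ref{inv1} in a single sentence: substitute the Birkhoff normal form $h=B(j,l)$ of Lemma~\ref{lact} into the action expansion of Lemma~\ref{act3} and compare with \eqref{inv5}. You simply unpack the mechanics of that substitution---in particular the observations that $J(l,B(j,l))=j$ makes the logarithm coefficient equal to $j$ and that the resulting rational pieces must cancel order by order because $S$ is smooth---which the paper leaves implicit.
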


\begin{proof}
We only need to substitute the normal form from Lemma \ref{lact} into the action expansion that we have found in Lemma \ref{act3}.
\end{proof}

It is somehow  amazing that even though in the action expansion in Lemma \ref{act3} rational functions and square roots appear, they all suitably combine into a polynomial in $j$ and $l$ as predicted by the theory. A few remarks are in order:

\begin{re}
For the particular case $\lambda=\mu=1$, the first-order terms in this expansion were already calculated in Pelayo \&  V\~u Ng\d{o}c \cite{PV3} and Babelon \& Douçon \cite{BD}. They coincide with the ones in \eqref{invar}.
\end{re}

\begin{re}
We see that the constant $\mu$ does not appear in the Taylor series invariant, because  the action $l$ corresponding to the coordinates $(q_1,p_1)$  is already scaled. In other words, since the Euclidean plane is non-compact and both the plane and the rotation action are isotropic, the coordinates scale in a natural way so that the constant $\mu$ plays no role. The sphere is compact, so $\lambda$ must play a role, but it is nothing else than a simple rescaling. The only special term is $\log \lambda$ in the first-order part, but it comes from scaling the term $\log|z|$ in $\mcA(z)$. 
\end{re}

\begin{re}
The value of the constant $\mathcal{A}_0$ corresponds, up to a factor $2\pi$, to the height invariant defined in \S \ref{sec:defheight}. 

In the case of the coupled spin-oscillator, the focus-focus singularity has the critical value $c=(0,0)$. If $H=h=0$, it means that $q_2$ is independent of $p_2$, taking the value $\pi/2$ when $p_2$ goes from $\zeta_2=0$ to $\zeta_3=2\lambda$ and $-\pi/2$ in the opposite direction. Therefore  $\mathcal{A}_0 = \mathcal{A}(0,0)$ is given by
$$\mathcal{A}_0  =  \oint q_2\, dp_2 =  \dfrac{\pi}{2} \int_{0}^{2\lambda} dp_2 - \dfrac{\pi}{2} \int_{2\lambda}^{0} dp_2 =  \pi (2\lambda-0) = 2\pi \lambda,$$
which essentially is the area of a rectangle. It coincides with 
$$2\pi h_1 = 2\pi \text{Vol}(Y^-) = 2\pi \text{Vol}(Y^+) =  \left( \dfrac{\pi}{2}+ \dfrac{\pi}{2} \right) (2\lambda-0)=2\pi \lam,$$ where $Vol$ denotes the symplectic volume, since $H$ will be positive for half of the possible values of $q_2$ and negative for the rest. 
\end{re}

The symmetry of the foliation with respect to the transformations $T_1$, $T_2$ has the following effect on the symplectic invariant $S(j,l)$:

\begin{theo}
The Taylor series invariant $S(j,l)$ of the spin-oscillator has only odd powers of $j$.
\label{noeven}
\end{theo}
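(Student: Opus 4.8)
The plan is to exploit the discrete symmetry $T_1$ of \eqref{transf} (the argument with $T_2$ is the same): it is a symplectomorphism of $(M,\omega)$ that preserves $\mcF$, fixes the focus-focus point $m$, and satisfies $T_1^*L=L$, $T_1^*H=-H$. First I would fix a momentum map $\Phi=(\Phi_1,\Phi_2)$ for $\mcF$ of the kind used to define the Taylor series invariant, so that $\Phi_2=L$ and near $m$ one has $\Phi_1=Q_1=\phi_1(L,H)$, where $\Phi_1$ takes on the fibre over $(l,h)$ the value $J(l,h)$ of the imaginary action, i.e.\ $\phi_1=J$. Since $J$ is odd in $h$ (remark after Lemma \ref{lact}), $\phi_1(L,-H)=-\phi_1(L,H)$, hence
$$\Phi\circ T_1=\bigl(\phi_1(L,-H),\,L\bigr)=(-\Phi_1,\Phi_2)=:\Psi .$$
Because $T_1$ is a symplectomorphism, $\Psi=\Phi\circ T_1$ is again an admissible momentum map of the same kind (near $m$ it equals $(-Q_1,Q_2)$, which is $Q\circ g$ for the linear symplectomorphism $g:(x_1,y_1,x_2,y_2)\mapsto(y_1,-x_1,y_2,-x_2)$); and since the Taylor series invariant depends on such a momentum map only up to symplectomorphism (V\~u Ng\d{o}c \cite{Vu1}), the Taylor series computed from $\Phi$ and from $\Psi$ coincide.

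Then I would compute the $\Psi$-construction in terms of the $\Phi$-construction. With $z=\Phi_1+\mathrm i\Phi_2$ and $w=\Psi_1+\mathrm i\Psi_2$ one has $w=-\bar z$, i.e.\ $(w_1,w_2)=(-z_1,z_2)$. Replacing $\Phi_1$ by $-\Phi_1$ reverses the $\Phi_1$-flow but leaves its first return time to a given $L$-orbit unchanged, so $\tau_1^\Psi(w)=\tau_1^\Phi(z)$, while the compensating displacement along the $L$-orbit is negated, so $\tau_2^\Psi(w)\equiv-\tau_2^\Phi(z)\bmod 2\pi$. Using $|w|=|z|$ and $\arg w=\pi-\arg z$ (valid in the upper half-plane, where the expansion is carried out) and fixing the $2\pi\Z$-lift of $\tau_2^\Psi$ by the normalisation $\sigma_2^\Psi(0)\in[0,2\pi[$, this gives, with $\sigma_i=\partial S/\partial z_i$,
$$\sigma_1^\Psi(w_1,w_2)=\sigma_1^\Phi(-w_1,w_2),\qquad \sigma_2^\Psi(w_1,w_2)=\pi-\sigma_2^\Phi(-w_1,w_2).$$
Since the two invariants agree we have $(\sigma_i^\Phi)^\infty=(\sigma_i^\Psi)^\infty$; the first relation then forces $(\sigma_1^\Phi)^\infty$ to be even in its first variable, i.e.\ $\partial S/\partial j$ is an even function of $j$ — this is the assertion of Theorem A. Integrating from $j=0$, the contribution $\int_0^j\sigma_1(t,l)\,\dee t$ is odd in $j$, and the second relation at $w_1=0$ yields $\sigma_2^\Phi(0,l)=\pi-\sigma_2^\Phi(0,l)$, hence $\partial S/\partial l(0,l)=\pi/2$ and $S(0,l)=\tfrac\pi2 l$. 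Therefore $S(j,l)=\tfrac\pi2 l+(\text{odd function of }j)$: apart from the normalisation term $\tfrac\pi2 l=\partial_l S(0)\cdot l$, every monomial of $(S)^\infty$ carries an odd power of $j$, which is the claim.

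As an alternative to the last paragraph one can argue directly from the explicit formula \eqref{act7}: $B(j,l)$ being odd in $j$, the substitution $j\mapsto-j$ amounts to $h\mapsto-h$, and Remarks \ref{symmet}--\ref{symmet2} then relate $\mcA(j+\mathrm il)$ to $\mcA(-j+\mathrm il)$ through $I\leftrightarrow\frac{\text{Area}_l}{2\pi}-I$; feeding this into \eqref{inv5} and using $\arg(j+\mathrm il)+\arg(-j+\mathrm il)=\pi$ reproduces the functional equation $S(j,l)+S(-j,l)=\pi l$. In either route the one point that needs care is the treatment of the branch of the complex logarithm, and — in the first route — the resolution of the $2\pi\Z$-ambiguity in the lift of $\tau_2^\Psi$; an error there would flip the sign of the $\tfrac\pi2 l$ term and contradict \eqref{invar}. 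Everything else reduces to substituting the expansions already obtained in Lemmas \ref{act3} and \ref{lact}.
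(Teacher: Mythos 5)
Your argument is correct and yields the same functional equation $S(j,l)+S(-j,l)=\pi l$, but your main route differs genuinely from the paper's. The paper stays at the level of the action: it substitutes the transformation rules for $\mcA$, $j$ and $\arg z$ into the explicit identity $S(z)=\mcA(z)-2\pi\lambda+j\log|z|-j-l\arg(z)$ of \eqref{act7}, using $\mcA(-\bar{z})=4\pi\lambda-\mcA(z)$ from Remark \ref{symmet}, and reads off the functional equation. You instead work with the definitional one-form $\sigma$: you exhibit a second admissible extension $\Psi=\Phi\circ T_1=(-\Phi_1,L)$ of Eliasson's normal form (via the linear symplectomorphism $g$), compute from first principles how $\tau_1,\tau_2$ --- hence $\sigma_1,\sigma_2$ --- transform under $\Phi_1\mapsto-\Phi_1$, and invoke the well-definedness of the Taylor series invariant to force $(\sigma_i^\Phi)^\infty=(\sigma_i^\Psi)^\infty$. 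That route is more intrinsic: it bypasses the explicit action expansion of Lemma \ref{act3} and the area identity \eqref{I}, both of which carry orientation conventions that need care, and it delivers the evenness of $\partial S/\partial j$ advertised in Theorem A directly rather than by differentiating the functional equation. Your ``alternative'' paragraph is essentially the paper's own argument. One caveat you flag correctly: the symmetry by itself only pins $\sigma_2(0)$ down to $\{\pi/2,3\pi/2\}$ via $\sigma_2(0)\equiv\pi-\sigma_2(0)\pmod{2\pi}$ together with the $[0,2\pi[$-normalisation; the particular value $\pi/2$ is fixed by the explicit expansion \eqref{invar}, but the parity statement of Theorem \ref{noeven} holds under either resolution, so this does not affect the validity of your proof.
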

\begin{proof}
The invariant $S(j,l)$ is a property of the foliation $\mcF$ induced by $(L,H)$. The foliation $\mcF$ is invariant under the symplectic transformations $T_1$, $T_2$ of $M$ defined in \eqref{transf}. These transformations act on the functions as
$$ L \mapsto L \quad H \mapsto -H \quad J \mapsto -J \quad B \mapsto -B$$
and on the variables as
$$ l \mapsto l \quad h \mapsto -h \quad j \mapsto -j.$$
From \eqref{act7} we see that the symplectic invariant is given by 
\begin{equation}
S(j,l) = \mcA(z) - 2\pi \lam + j \log |z|-j - l \arg(z).
\label{SS}
\end{equation} Let us assume $l>0$ for simplicity. Now apply $T_1$ or $T_2$ to \eqref{SS}.  We know that $j$ will change sign, $\arg(z)$ will become $-\pi- \arg(z)$ and from \eqref{I} we see that  $\mcA(z)$ will transform into $4\pi \lam - \mcA(z)$. So we obtain
\begin{align*}
S(-j,l)&= 4 \pi \lam - \mcA(z) - 2\pi \lam - j \log |z| + j + l\pi +l \arg(z) \\
&= -S(j,l) + l\pi.
\end{align*} Thus we obtain $S(j,l)-\frac{\pi}{2}l = -S(-j,l) + \frac{\pi}{2}l$, i.e.\ $S(j,l) -\frac{\pi}{2}l $ is an odd function of $j$, which implies that the Taylor series of $S(j,l)$ cannot have terms with even powers of $j$. 
\end{proof}


\section{Period and rotation number}

Using the same techniques we can calculate the expansions of other dynamical quantities of interest, such as the period $T$ and the rotation number $W$. Consider a regular fibre of the semitoric system $(M,\omega,(L,H))$ defined by \eqref{syst}, which is diffeomorphic to $\T^2=\mbS^1 \times \mbS^1$. Consider also the reduction of the system by the $\mbS^1$-action induced by $L$, with fibres diffeomorphic to $\mbS^1$. Then the reduced period $T$ essentially refers to the period of the reduced system and the rotation number $W$ is the quotient of the two rotation frequencies on the 2-torus fibre.

Let $I(l,h)$ be the action integral of the reduced system. The period is defined as
\begin{equation}
T(l,h) := 2\pi \dfrac{\partial I}{\partial h}(l,h)
\label{per1}
\end{equation}
and we denote its scaled counterpart by $\mfT :=\frac{1}{\sqrt{\lambda \mu}}T$. It is also useful to know the expression of the period as a function of the action values $j$ and $l$, which we will denote by $\That(j,l) := T(l,B(j,l))$.

\begin{lemm}
The period $T(l,h)$ has the expansion
\begin{align}
\dfrac{T(l,h)}{\sqrt{\lam \mu}} &=  \dfrac{1}{4\lam} \dfrac{ l^3+6\lam \mu lh^2}{l^2+4\lam \mu h^2} - \dfrac{1}{256\lam^2}\dfrac{21l^6+364l^4h^2+1680l^2h^4+2496h^6}{(l^2+4 \lam \mu h^2)^2} \nonumber\\
&+ \dfrac{\pi}{6144\lam^3} \dfrac{185l^9+ 6084\lam \mu l^7h^2+ 54288\lam^2 \mu^2 l^5h^4+193728\lam^3 \mu^3 l^3 h^6 +244224\lam^4 \mu^4 l h^8}{(l^2+4 \lam \mu h^2)^3}+...\nonumber\\
&+\dfrac{T^\alpha(l,h)}{2\pi\sqrt{\lam \mu}} \log \dfrac{32\lambda}{\sqrt{l^2+4\lam \mu h^2}}, 
\label{per2}
\end{align}
around the focus-focus point, where 
\begin{align*}
\dfrac{T^\alpha(l,h)}{2\pi\sqrt{\lam \mu}} &= 1-\dfrac{l}{8\lambda} + \dfrac{3}{256\lambda^2} (3l^2+20\lam \mu h^2) - \dfrac{5}{2048\lambda^3}( 5l^3+84 \lam \mu l h^2) \\
&+ \dfrac{35}{262144\lambda^4} (35l^4+1080\lam \mu l^2h^2+1584\lam^2 \mu^2 h^4) +...
\end{align*}
\label{per3}
\end{lemm}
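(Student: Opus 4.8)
The plan is to compute $T$ straight from its definition \eqref{per1}, $T(l,h)=2\pi\,\partial I/\partial h$, by differentiating the action. Two essentially equivalent routes are available: differentiate the series \eqref{act2} of Lemma \ref{act3} term by term, or --- cleaner --- differentiate the integral representation \eqref{acti} under the integral sign and only then expand. I would carry out the second route and use the first as a check.

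First I would write $I(l,h)=\tfrac{1}{\pi}\int_{\zeti_2}^{\zeti_3} q_2(p_2;l,h)\,\dee p_2$, i.e.\ \eqref{acti} after using $H_{red}(-q_2,p_2)=H_{red}(q_2,p_2)$ to fold the loop $\beta_{l,h}$ onto the interval between the two roots $\zeti_2\le\zeti_3$ of $P$ that lie in the physical region. Since $q_2$ vanishes at both endpoints --- at a root of $P$ one has $-p_2(p_2-l)(p_2-l-2\lambda)=2\lambda^2\mu h^2$, so the argument of the $\arccos$ in \eqref{qq} equals $1$ there --- the boundary terms in Leibniz's rule drop out and $T=2\pi\,\partial_h I=\pm 2\int_{\zeti_2}^{\zeti_3}\partial_h q_2\,\dee p_2$. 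Differentiating \eqref{qq} and simplifying (using $P(p_2)=\tfrac{2}{\lambda^2\mu}\bigl(-p_2(p_2-l)(p_2-l-2\lambda)\bigr)-4h^2$) collapses all the square roots into $\partial_h q_2=\mp 2/\sqrt{P(p_2)}$, so that, with the orientation of $\beta_{l,h}$ fixed by the convention $\partial I/\partial h>0$ in \eqref{acti},
\begin{equation*}
T(l,h)\ =\ c(\lambda,\mu)\oint_{\beta_{l,h}}\frac{\dee p_2}{\sqrt{P(p_2)}},
\end{equation*}
a single complete elliptic integral of the first kind, with no third-kind contributions (consistent with the $\Pi$-free shape of \eqref{per2}). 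Passing to the scaled variables $\mfl,\mfh,\mfp$ of the Notation and performing the change of variable of the proof of Theorem \ref{theo2} gives $T/\sqrt{\lambda\mu}$ as a constant times $K(\mfk)/\sqrt{\zeti_3-\zeti_1}$, with $\mfk^2=(\zeti_3-\zeti_2)/(\zeti_3-\zeti_1)$ exactly as in \eqref{Ia}.

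Next I would expand $K(\mfk)$ in the singular limit $\mfk\to 1$ by the formula already recorded in the proof of Lemma \ref{act3}, and substitute the $\eps$-expansions (with $\mfl\mapsto\mfl\eps$, $\mfh\mapsto\mfh\eps$) of the roots $\zeti_i$ and of $\mfk^2$ obtained there. Collecting separates the answer into a logarithm-free part and a multiple of $\log\bigl(32\lambda/\sqrt{l^2+4\lambda\mu h^2}\bigr)$. The latter's coefficient is the \emph{imaginary period} $T^\alpha$, the period along the vanishing cycle $\alpha$; it is a constant multiple of $\partial_h J$, so its series follows at once from the expansion \eqref{expJ} of the imaginary action, yielding the displayed series for $T^\alpha$. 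The logarithm-free part, put over the common denominator $(l^2+4\lambda\mu h^2)^3$ (equivalently $(\mfl^2+4\mfh^2)^3$), must reduce to the rational function in \eqref{per2}. As an independent check one differentiates \eqref{act2} directly: $l\arctan(\cdot)$ contributes $-2\sqrt{\lambda\mu}\,l^2/(l^2+4\lambda\mu h^2)$, the term $J\log(\cdot)$ contributes $(\partial_h J)\log(\cdot)-4\lambda\mu h\,J/(l^2+4\lambda\mu h^2)$, and the explicit rational terms are differentiated by the quotient rule; the two computations must agree.

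The bulk of the work is routine power-series bookkeeping; the only genuine subtlety is verifying that the logarithm-free pieces --- which a priori are merely known to combine into a function smooth on $F^{-1}(V\setminus b_\ka)$ --- actually collapse into the stated rational function with denominator a power of $l^2+4\lambda\mu h^2$, all stray factors $\sqrt{l^2+4\lambda\mu h^2}$ (coming from the $\arctan$ derivative on one side and hidden inside the series of $J$ on the other) cancelling. Tracking the overall constant and the orientation sign in $T=\pm 2\int_{\zeti_2}^{\zeti_3}\partial_h q_2\,\dee p_2$ is the other place where slips are easy, and matching the leading behaviour against Lemma \ref{act3} pins it down.
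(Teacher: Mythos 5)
Your proposal is correct and follows essentially the same route as the paper: the paper likewise notes the option of differentiating the series \eqref{act2} directly, but carries out the cleaner computation $\mfT = 2\pi\,\partial_\mfh\mfI = 4\mcN_A = \tfrac{4\sqrt{2}}{\sqrt{\zeti_3-\zeti_1}}K(\mfk)$, so the period collapses to a single complete elliptic integral of the first kind, which is then expanded in the limit $\mfk\to 1$ using the same root and modulus expansions as in the proof of Lemma~\ref{act3}. Your additional observation that the logarithmic coefficient $T^\alpha$ is (up to a constant) $\partial_h J$, read off from \eqref{expJ}, is a legitimate shortcut that the paper does not spell out but is implicit in its computation.
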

\begin{proof}
There are two ways to calculate the period. The first and simplest is to apply the definition \eqref{per1} to the series expansion \eqref{act2} of $I(l,h)$, which leads to the desired result \eqref{per2}.

The second way, however, can give us a better insight of the meaning of the period expansion. Let us switch to scaled coordinates. Using  expression \eqref{act1} we have
\begin{equation*}
\mfT(\mfl,\mfh)= 2 \pi \dfrac{\partial \mfI}{\partial \mfh} (\mfl,\mfh)= \oint_{\beta_{\mfl,\mfh}} \dfrac{\partial}{\partial \mfh}  \arccos \left( \dfrac{\sqrt{2}\, \mfh }{\sqrt{-\mfp_2 (\mfp_2-\mfl) (\mfp_2-\mfl-2)}} \right)d\mfp_2 = -2\oint_{\beta_{\mfl,\mfh}} \dfrac{d\mfp_2}{\mfw},
\end{equation*} where $w$ is as in \eqref{curv}. The period is therefore an elliptic integral over the same elliptic curve $\Ga_{\mfl,\mfh}$ as $\mfI(\mfl,\mfh)$. We finally have
\begin{equation*}
\mfT(\mfl,\mfh) = 4 \int_{\zeti_2}^{\zeti_3} \dfrac{d\mfp_2}{\mfw} = 4\,\mcN_A = \dfrac{4\sqrt{2}  }{\sqrt{\zeti_3-\zeti_1}} K(\mfk),
\end{equation*} where in the last step we used \eqref{Ia}. Using the expansions of the proof of Lemma \ref{act3} we obtain
\begin{align}
\mfT(\mfl,\mfh) &= \dfrac{1}{4} \dfrac{ \mfl^3+6\mfl\mfh^2}{\mfl^2+4\mfh^2} - \dfrac{1}{256}\dfrac{21\mfl^6+364\mfl^4\mfh^2+1680\mfl^2\mfh^4+2496\mfh^6}{(\mfl^2+4\mfh^2)^2} \nonumber\\
&+ \dfrac{1}{6144} \dfrac{185\mfl^9+ 6084\mfl^7\mfh^2+ 54288 \mfl^5\mfh^4+193728 \mfl^3 \mfh^6 +244224\mfl\mfh^8}{(\mfl^2+4\mfh^2)^3}+...\nonumber\\
&+\dfrac{\mfT^\alpha(\mfl,\mfh)}{2\pi} \log \dfrac{32}{\sqrt{\mfl^2+4\mfh^2}}, 
\end{align}
where 
\begin{align*}
\mfT^\alpha(\mfl,\mfh) &= 1-\dfrac{ \mfl}{8} + \dfrac{3}{256} (3\mfl^2+20\mfh^2) - \dfrac{5}{2048}( 5\mfl^3+84\mfl \mfh^2) \\
&+ \dfrac{35}{262144} (35\mfl^4+1080\mfl^2\mfh^2+1584\mfh^4) +...
\end{align*}
Reverting the scaling of variables we obtain \eqref{per2}.
\end{proof}

By substituting $h$ by the modified Birkhoff normal form of Lemma \ref{lact} we obtain the series expansion of $\That(j,l)$.

\begin{co}
The period $\That(j,l)$ as a function of the values $j$ of the imaginary action and $l$ of the angular momentum has for $z = j + i l$ the expansion
\begin{align*}
\That&(j,l)=\dfrac{\sqrt{\lambda \mu}}{2\lambda} l - \dfrac{\sqrt{\lambda \mu}}{128\lambda^2} (34j^2+21l^2) +\dfrac{5\sqrt{\lambda \mu}}{3072\lambda^3}l (120j^2+37l^2)+... \\
&+ (\log|z|-\log (32\lambda)) \left( -2\sqrt{\lambda \mu} + \dfrac{\sqrt{\lambda \mu}}{4\lambda}l - \dfrac{\sqrt{\lambda \mu}}{128\lambda^2} (15j^2+9l^2)+\dfrac{\sqrt{\lambda \mu}}{1024\lambda^3} l(75j^2+25l^2)+... \right)
\end{align*}
around the focus-focus point.
\end{co}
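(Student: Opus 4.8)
The plan is to derive the expansion of $\That(j,l)$ directly from its definition $\That(j,l) = T(l,B(j,l))$ by inserting the modified Birkhoff normal form $h = B(j,l)$ of Lemma \ref{lact} into the two-part expansion of the reduced period $T(l,h)$ obtained in Lemma \ref{per3}, and then re-organising the outcome as a Taylor series in $(j,l)$ plus a Taylor series times $\log|z|$, where $z = j + il$. Concretely, I would substitute $h = B(j,l)$ separately into the rational-function part of \eqref{per2}, into the coefficient $T^\alpha(l,h)$ of the logarithm, and into the argument $\frac{32\lambda}{\sqrt{l^2+4\lambda\mu h^2}}$ of the logarithm, and finally revert the scaling \eqref{not1}--\eqref{not3}.

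The one genuinely non-mechanical step is the treatment of the logarithm. Here I would use that, by Lemma \ref{lact}, $\sqrt{\lambda\mu}\,B(j,l) = \tfrac12 j + \cdots$, so that $4\lambda\mu B(j,l)^2 = j^2 + \cdots$ and hence $l^2 + 4\lambda\mu B(j,l)^2 = j^2 + l^2 + \cdots = |z|^2\bigl(1 + \psi(j,l)\bigr)$, with $\psi$ a convergent power series vanishing at the origin. Then $\log\frac{32\lambda}{\sqrt{l^2+4\lambda\mu B(j,l)^2}} = -\bigl(\log|z| - \log(32\lambda)\bigr) - \tfrac12\log\bigl(1+\psi(j,l)\bigr)$, which isolates the genuinely singular factor $\log|z| - \log(32\lambda)$ and leaves the smooth remainder $-\tfrac12\log(1+\psi)$, to be expanded as an ordinary Taylor series and, after multiplication by the logarithm's coefficient, absorbed into the non-logarithmic part. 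Multiplying out, the coefficient of $\log|z| - \log(32\lambda)$ equals minus the log-coefficient of $T$ evaluated at $h = B(j,l)$ and re-expanded via Lemmas \ref{per3} and \ref{lact}; in doing so one must keep track of the doubling of the vanishing cycle recorded after \eqref{eqJ}, which is what produces the leading term $-2\sqrt{\lambda\mu}$ rather than $-\sqrt{\lambda\mu}$.

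The remainder is power-series bookkeeping, and the only real obstacle is to check that it closes up properly: the non-logarithmic contributions --- the rational functions of \eqref{per2} evaluated at $h = B(j,l)$, which individually are not even smooth at the origin (for instance $\frac{l^3 + 6\lambda\mu l B^2}{l^2 + 4\lambda\mu B^2}$ produces a term proportional to $\frac{l j^2}{l^2+j^2}$), together with the corrections from $-\tfrac12\log(1+\psi)$ --- must combine into a genuine Taylor series in $(j,l)$. This is forced by the structure of the problem: $\That$, being the reduced period expressed in action coordinates near a focus-focus fibre, is of the form (Taylor series) $+$ (Taylor series)$\,\log|z|$, exactly as the analogous cancellation makes $S(j,l)$ polynomial in the remark following Theorem \ref{inv1}; nevertheless it has to be verified order by order. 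Collecting coefficients and reverting the scaling \eqref{not1}--\eqref{not3} then yields the stated expansion.
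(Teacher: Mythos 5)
Your strategy is the paper's own: the Corollary is proved in one line by substituting the modified Birkhoff normal form $h=B(j,l)$ of Lemma~\ref{lact} into the expansion of $T(l,h)$ from Lemma~\ref{per3}, and your treatment of the logarithm, writing $l^2+4\lambda\mu B(j,l)^2 = |z|^2(1+\psi(j,l))$ and isolating $\log|z|-\log(32\lambda)$, is the right bookkeeping for re-expanding the result as a Taylor series plus a Taylor series times $\log|z|$.

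However, the step where you invoke \emph{``the doubling of the vanishing cycle recorded after \eqref{eqJ}''} to turn the leading log coefficient from $-\sqrt{\lambda\mu}$ into $-2\sqrt{\lambda\mu}$ is a misattribution, not a valid move. The coefficient of $\log\frac{32\lambda}{\sqrt{l^2+4\lambda\mu h^2}}$ in $T(l,h)=2\pi\,\partial_h I$ is simply $\partial_h J$, since $J$ multiplies the logarithm in \eqref{act2}; from \eqref{expJ} its leading value is $2\sqrt{\lambda\mu}$, and the same factor $2$ is visible in the exact identity $\mfT = 4\mcN_A \approx 2\log\frac{32}{\sqrt{\mfl^2+4\mfh^2}}$ in the proof of Lemma~\ref{per3}. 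Thus the correct leading log coefficient is already $2\sqrt{\lambda\mu}(1-\frac{l}{8\lambda}+\cdots)$ \emph{before} any cycle considerations; the displayed normalization $\frac{T^\alpha(l,h)}{2\pi\sqrt{\lambda\mu}} = 1-\frac{l}{8\lambda}+\cdots$ in Lemma~\ref{per3} is off by a factor $2$ (compare with the rotation number, where $W^\alpha = -\partial_l J$ is normalized correctly), which appears to be a typo in the paper rather than something your argument should compensate for. The $\alpha$-cycle wrapping remark after \eqref{eqJ} concerns the definition and normalization of $J$ and is already fully accounted for in the expansion of Lemma~\ref{act3}; it is not a separate correction to be applied to $T$, which is a $\beta$-cycle integral. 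Using the correct $\partial_h J$ as the log coefficient, the naive substitution then produces the Corollary with no further adjustment, and the non-logarithmic rational terms cancel against the $-\tfrac12\log(1+\psi)$ corrections exactly as you say. So: right strategy, right answer, but the factor-of-$2$ patch should be justified by identifying the log coefficient with $\partial_h J$ (or by the direct $K(k)$ expansion), not by appealing to cycle doubling.
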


Similarly, we define the rotation number as
\begin{equation}
W(l,h) := - \dfrac{\partial I}{\partial l}(l,h)
\label{w1}
\end{equation} 
and its scaled counterpart as $\mfW := W$. 

We also denote the rotation number as a function of the action values $j$ and $l$ by $\What(j,l) := W(l,B(j,l))$.

\begin{lemm}
The rotation number $W(l,h)$ has the expansion
\begin{align}
2\pi W(l,h) &= -\dfrac{\pi}{2} + \arctan \left(\dfrac{l}{2\sqrt{\lam \mu}\,h} \right) -\dfrac{\sqrt{\lam \mu}}{4\lambda}\dfrac{3l^2+8\lam \mu h^2}{l^2+4\lam \mu h^2}\nonumber\\&+\dfrac{\sqrt{\lam \mu}}{128\lambda^2} \dfrac{lh(51l^4 + 392\lam \mu l^2h^2 +816\lam^2\mu^2 h^4)}{(l^2+4\lam \mu h^2)^2}\nonumber\\
&- \dfrac{\sqrt{\lam \mu}}{1536\lam^3} \dfrac{h(315 l^8 + 4434\lam \mu l^6h^2+ 22872 \lam^2 \mu^2 l^4h^4 + 49248 \lam^3 \mu^3 l^2 h^6 +36224 \lam^4 \mu^4 h^8) }{(l^2+4\lam \mu h^2)^3}+...\nonumber\\
&+W^\alpha(l,h) \log \dfrac{32\lam}{\sqrt{l^2+4 \lam \mu h^2}}, 
\label{w2}
\end{align}
around the focus-focus point, where 
\begin{align*}
\dfrac{W^\alpha(l,h)}{\sqrt{\lam \mu}} &=\dfrac{h}{4\lambda} - \dfrac{9lh }{64\lambda^2} + \dfrac{5h}{1024\lambda^3}(15l^2+28\lam \mu h^2) - \dfrac{175lh}{32768\lambda^4}( 7l^2+36\lam \mu h^2 )  \\
&+ \dfrac{63h}{1048576\lambda^5} (315l^4+3080\lam \mu l^2 h^2 +2288\lam^2 \mu^2 h^4) +...
\end{align*}
\end{lemm}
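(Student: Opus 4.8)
The plan is to prove this exactly as the analogous period expansion was established in Lemma~\ref{per3} via its ``first and simplest'' route: apply the defining formula \eqref{w1} directly to the series expansion \eqref{act2} of $I(l,h)$ from Lemma~\ref{act3} and differentiate term by term with respect to $l$, using $2\pi W(l,h)=-\partial_l\bigl(2\pi I(l,h)\bigr)$. Since \eqref{act2} decomposes $2\pi I$ into an affine part $2\lambda\pi+\tfrac{\pi}{2}l$, an arctangent term $l\arctan\!\bigl(\tfrac{l}{2\sqrt{\lam\mu}h}\bigr)$, a logarithmic term $J(l,h)\log\tfrac{32\lambda}{\sqrt{l^2+4\lam\mu h^2}}$, and a remainder that is a rational function of $l$ and $h$, I would differentiate each of these summands separately.

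Concretely, the affine part yields the constant $-\tfrac{\pi}{2}$; differentiating the arctangent term uses $\partial_l\bigl[l\arctan(\tfrac{l}{2\sqrt{\lam\mu}h})\bigr]=\arctan(\tfrac{l}{2\sqrt{\lam\mu}h})+\tfrac{2\sqrt{\lam\mu}\,hl}{l^2+4\lam\mu h^2}$, producing both the closed-form $\arctan$ contribution appearing in \eqref{w2} and an extra rational term; and writing $\log\tfrac{32\lambda}{\sqrt{l^2+4\lam\mu h^2}}=\log(32\lambda)-\tfrac12\log(l^2+4\lam\mu h^2)$ the product rule gives
\[
\partial_l\!\left[J(l,h)\log\tfrac{32\lambda}{\sqrt{l^2+4\lam\mu h^2}}\right]=\partial_l J(l,h)\,\log\tfrac{32\lambda}{\sqrt{l^2+4\lam\mu h^2}}-\frac{l\,J(l,h)}{l^2+4\lam\mu h^2}.
\]
Thus the logarithmic coefficient of $2\pi W$ is exactly $W^\alpha(l,h)=-\partial_l J(l,h)$, and substituting the expansion \eqref{expJ} of $J$ and differentiating it term by term reproduces the stated series for $W^\alpha$. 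All the non-logarithmic pieces — the constant $-\tfrac\pi2$, the closed $\arctan$ term, the rational contribution $-\tfrac{l\,J(l,h)}{l^2+4\lam\mu h^2}$ expanded via \eqref{expJ}, the extra rational term coming from the arctangent, and the $l$-derivative of the rational remainder of \eqref{act2} — are then collected over the common denominators $(l^2+4\lam\mu h^2)^k$ to give the rational part displayed in \eqref{w2}.

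I expect the only real work to be this last bookkeeping step: combining the several rational contributions over a common denominator and checking that the resulting numerators match those in \eqref{w2}; this is routine but lengthy and, as with the expansions in Lemma~\ref{act3}, is best carried out with a computer algebra system, while also taking a little care about the branch of the $\arctan$ so that its derivative and the closed-form term are written consistently near $h=0$. As an alternative — in the spirit of the second method in the proof of Lemma~\ref{per3} — one could reprove the lemma intrinsically by differentiating the elliptic integral \eqref{acti} under the integral sign: since $l$ enters both the cubic $P(p_2)$ and the rational prefactor, $\partial_l I$ is again an Abelian integral on $\Gamma_{l,h}$, of first and third kind, which can be reduced to Legendre normal form and expanded in the limit $\mfk\to1$ using the same expansions of $K$, $\Pi$ and $\Lambda_0$ as before; this is more illuminating but computationally heavier, so I would only use it as a consistency check.
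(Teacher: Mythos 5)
Your proposal is correct and follows the first of the two routes that the paper itself indicates: differentiating the expansion \eqref{act2} of $2\pi I(l,h)$ term by term via \eqref{w1}, with the key observation that the logarithmic coefficient is $W^\alpha=-\partial_l J$ (easily checked against \eqref{expJ}), and collecting the remaining rational contributions over common denominators. The paper's proof mentions this direct-differentiation method and then, as you anticipate in your alternative, also records the intrinsic elliptic-integral form $2\pi\mfW=-2(\mcN_{B,\mfl}+\mcN_{B,\mfl+2})$ obtained by differentiating under the integral sign; your treatment is thus in full agreement with the paper's.
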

\begin{proof}
The proof of this Lemma is completely analogous to the one of Lemma \ref{per3}. We can  either apply the definition \eqref{w1} directly to the series expansion \eqref{act2} of $I(l,h)$, which leads to the result \eqref{w2}, or work out the explicit integrals in order to get an idea of the structure  of the rotation number. Switching to scaled variables and using expression \eqref{act1} we have
\begin{align*}
2\pi\mfW (\mfl,\mfh) &= - \dfrac{\partial \mfI}{\partial \mfl} (\mfl,\mfh)= - \oint_{\beta_{\mfl,\mfh}} \dfrac{\partial}{\partial \mfl} \arccos \left( \dfrac{\sqrt{2}\, \mfh }{\sqrt{-\mfp_2 (\mfp_2-\mfl) (\mfp_2-\mfl-2)}} \right)d\mfp_2 \\
&= -\mfh \int_{\zeti_2}^{\zeti_3} \left( \dfrac{1}{\mfp_2-\mfl} + \dfrac{1}{\mfp_2-\mfl-2} \right) \dfrac{d\mfp_2}{\mfw} =-2 \left( \mcN_{B,\mfl} + \mcN_{B,\mfl+2}\right).
\end{align*} 
By substituting now \eqref{Ib} we obtain
\begin{equation*}
2\pi\mfW (\mfl,\mfh) = \frac{-2\sqrt{2} \mfh }{\sqrt{\zeti_3-\zeti_1}}  \left( \dfrac{1}{(\zeti_3-\mfl)} \Pi(\mfn_\mfl,\mfk) + \dfrac{1}{(\zeti_3-\mfl-2)} \Pi(\mfn_{\mfl+2},\mfk)\right). 
\end{equation*} 
Using the expansions of the proof of Lemma \ref{act3} we get
\begin{align}
2\pi\mfW(\mfl,\mfh) &= -\dfrac{\pi}{2} + \arctan \left(\dfrac{\mfl}{2\mfh} \right) - \dfrac{3\mfh}{4}+\dfrac{\mfh^3}{\mfl^2+4\mfh^2}+\dfrac{\mfh}{128} \dfrac{51\mfl^5 + 392 \mfl^3\mfh^2 +816\mfl \mfh^4}{(\mfl^2+4\mfh^2)^2}\nonumber\\
&- \dfrac{1}{1536} \dfrac{315 \mfl^8\mfh + 4434\mfl^6\mfh^3+ 22872 \mfl^4\mfh^5 + 49248 \mfl^2 \mfh^7 +36224\mfh^9 }{(\mfl^2+4\mfh^2)^3}+...\nonumber\\
&+\mfW^\alpha(\mfl,\mfh) \log \dfrac{32}{\sqrt{\mfl^2+4\mfh^2}}, 
\end{align}
where 
\begin{align*}
\mfW^\alpha(\mfl,\mfh) &=\dfrac{\mfh}{4} - \dfrac{9\mfl\mfh }{64} + \dfrac{5\mfh}{1024}(15\mfl^2+28\mfh^2) - \dfrac{175\mfl\mfh}{32768}( 7\mfl^2+36\mfh^2 )  \\
&+ \dfrac{63\mfh}{1048576} (315\mfl^4+3080\mfl^2 \mfh^2 +2288\mfh^4) +...
\end{align*}
Reverting the scaling of variables we obtain \eqref{w2}. 
\end{proof}

By substituting $h$ by the modified Birkhoff normal form of Lemma \ref{lact} we obtain the series expansion of $\What(j,l)$.
\begin{co}
The rotation number $\What(j,l)$ as a function of the values $j$ of the imaginary action and $l$ of the angular momentum has, for $z = j + i l$, the expansion
\begin{align}
2\pi\What(j,l)&=-\dfrac{\pi}{2} - \arg(z) -\dfrac{j}{4\lambda} + \dfrac{17jl}{128\lambda^2}- \dfrac{7}{6144\lambda^3}j(34j^2+57 l^2)+...  \nonumber \\
&- (\log|z|-\log (32\lambda)) \left( \dfrac{j}{8\lambda} - \dfrac{7jl}{128\lambda^2} + \dfrac{1}{1024\lambda^3}j(15j^2+26l^2)+... \right)
\label{rot}
\end{align}
around the focus-focus point. 
\end{co}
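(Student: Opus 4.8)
The plan is to read off \eqref{rot} from the substitution announced just above the statement: since $\What(j,l)=W(l,B(j,l))$ by definition, one feeds the modified Birkhoff normal form $B$ of Lemma~\ref{lact} into the expansion \eqref{w2} of $W(l,h)$ and re-expands the composite around $(j,l)=(0,0)$. All of the work is in checking that the transcendental and rational blocks of \eqref{w2} reorganise into the claimed shape.

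First I would assemble the auxiliary expansions coming from Lemma~\ref{lact}. With $z=j+il$ and $|z|^2=j^2+l^2$ one has $2\sqrt{\lam\mu}\,B(j,l)=j\bigl(1+\tfrac{l}{8\lam}-\tfrac{5(j^2+l^2)}{256\lam^2}+\cdots\bigr)$ and hence $l^2+4\lam\mu\,B(j,l)^2=|z|^2+\tfrac{j^2l}{4\lam}+\cdots$. Note that in \eqref{w2} the energy $h$ enters only through the combinations $\sqrt{\lam\mu}\,h$ and $\lam\mu h^2$, both of which, after the substitution, turn into power series in $j,l$ with no surviving factor of $\mu$; this is the mechanism by which $\mu$ disappears from \eqref{rot}. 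From the expansion of $B$ I would then obtain
\[
  \log\frac{32\lam}{\sqrt{l^2+4\lam\mu\,B(j,l)^2}}=-\bigl(\log|z|-\log(32\lam)\bigr)-\tfrac12\log\!\Bigl(1+\tfrac{j^2l}{4\lam|z|^2}+\cdots\Bigr)
\]
and, using $\arg z=\arctan(l/j)$ for $j>0$,
\[
  \arctan\!\Bigl(\tfrac{l}{2\sqrt{\lam\mu}\,B(j,l)}\Bigr)=\arg z-\tfrac{jl^2}{8\lam(j^2+l^2)}+\cdots .
\]

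Next I would substitute into \eqref{w2} and collect order by order in $(j,l)$. The constant $-\tfrac{\pi}{2}$ passes through untouched; the arctangent term becomes $-\arg z$ together with the rational correction above; and $W^\alpha(l,h)\log\frac{32\lam}{\sqrt{l^2+4\lam\mu h^2}}$ becomes $-\bigl(\log|z|-\log(32\lam)\bigr)\,W^\alpha(l,B(j,l))$ with no further work, since $W^\alpha(l,B(j,l))=\tfrac{j}{8\lam}-\tfrac{7jl}{128\lam^2}+\cdots$ is already a power series in $j,l$ — this is precisely the $\log$-coefficient in \eqref{rot}. The remaining terms of \eqref{w2}, together with the $\arctan$- and $\log$-corrections just listed, form a finite sum of rational functions of $j,l$ whose denominators are powers of $l^2+4\lam\mu\,B(j,l)^2=|z|^2+\cdots$.

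The step I expect to be the real obstacle is proving that this sum of rational functions collapses to the polynomial tail $-\tfrac{j}{4\lam}+\tfrac{17jl}{128\lam^2}-\cdots$ of \eqref{rot}, i.e.\ that all the $|z|^{-2k}$ poles cancel. Even at the first nontrivial order this is a genuine cancellation: the rational block of \eqref{w2} contributes $-\tfrac{j(2j^2+3l^2)}{8\lam(j^2+l^2)}$, which is not a polynomial, yet adding the arctangent correction $+\tfrac{jl^2}{8\lam(j^2+l^2)}$ yields precisely $-\tfrac{j}{4\lam}$. The same must happen at every order — it is forced by the rotation number having, near the focus-focus fibre, no singularity beyond the logarithm and the jump in $\arg z$ — and in practice one verifies it by carrying the expansion to the order displayed in \eqref{rot} and observing that the poles drop out, exactly as the rational functions and square roots of Lemma~\ref{act3} combine into a polynomial in the proof of Theorem~\ref{inv1}.
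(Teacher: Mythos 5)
Your plan coincides with the paper's proof, which consists solely of the sentence preceding the corollary (``By substituting $h$ by the modified Birkhoff normal form of Lemma~\ref{lact} we obtain the series expansion of $\What(j,l)$'') and offers no further detail. You spell out that substitution and correctly identify the three mechanisms at work: the arctangent of $\tfrac{l}{2\sqrt{\lam\mu}\,B(j,l)}$ splits into the angular term plus a rational correction, the logarithm splits into $\log|z|-\log(32\lambda)$ plus a smooth correction, and the rational block of \eqref{w2} must combine with those corrections so that all $|z|^{-2k}$ poles cancel order by order; you also note the cancellation of $\mu$ via the combinations $\sqrt{\lam\mu}\,h$ and $\lam\mu\,h^2$, and correctly match $W^\alpha(l,B(j,l))$ to the log-coefficient in \eqref{rot}. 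That is precisely the content that the paper leaves implicit, so the approach is the same.

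One caveat you should repair: your displayed intermediate formula
$\arctan\bigl(\tfrac{l}{2\sqrt{\lam\mu}\,B(j,l)}\bigr)=\arg z-\tfrac{jl^2}{8\lam(j^2+l^2)}+\cdots$
is inconsistent both with the claim that the arctangent ``becomes $-\arg z$'' and with the correction $+\tfrac{jl^2}{8\lam(j^2+l^2)}$ that you then add to the rational block $-\tfrac{j(2j^2+3l^2)}{8\lam(j^2+l^2)}$. As your own arithmetic shows, it is the \emph{plus} sign (and the overall $-\arg z$) that makes the sum collapse to $-\tfrac{j}{4\lambda}$, so the intermediate formula has a sign slip; with the minus sign written there, the pole would \emph{not} cancel at first order. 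This needs to be traced carefully through the branch conventions (the paper's $\arg$ uses the cut along the positive real axis), but it does not change the strategy, which is correct.
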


Dullin \& \vungoc  \cite{DV} showed that the rotation number must have the form 
$$ 2 \pi \What(j,l) = -A(j,l) \Re (\log z) - \Im (\log z) + \varsigma (j,l)$$ where $A(j,l)$ is the smooth function 
$$A(j,l) = \frac{ \frac{\partial B}{\partial l}  (j,l)}{ \frac{\partial B}{\partial j}(j,l)}$$ 
and $\varsigma(j,l)$ is a smooth and single-valued function near the origin. Comparing with \eqref{rot}, we see that it has the expected form, where
$$ A(j,l) =  \dfrac{\frac{\partial B}{\partial l}(j,l)}{ \frac{\partial B}{\partial j}(j,l)}= \dfrac{j}{8\lam} - \dfrac{7jl}{128\lam^2} + \dfrac{1}{1024\lam^3}(15j^3+26jl^2)-\dfrac{1}{65536\lam^4}(1005j^3l + 791jl^3)+... $$ and
$$
\varsigma (j,l) = (\log 32\lam) A(j,l) -\dfrac{\pi}{2} -\dfrac{j}{4\lam} + \dfrac{17jl}{128\lam^2}- \dfrac{7}{6144\lam^3}(34j^3+57j l^2)+...$$


\section{Vanishing twist and superintegrability}

The variation of the rotation number with respect to the angular momentum is the \emph{twist}
\begin{equation}
\mathcal{T}(l,h) := \dfrac{\partial W}{\partial l}(l,h)
\label{T1}
\end{equation} 
and we set $\hat{\mathcal{T}}(j,l) := \mcT(l,B(j,l))$. The twist around focus-focus points has been studied for example in Dullin \& Ivanov \cite{DI} and Dullin \& V\~u Ng\d{o}c \cite{DV}. In the second paper it is shown that, when the focus-focus point is loxodromic, i.e., the Hessian of the function $H$ has four complex eigenvalues with non-zero real and imaginary parts, then there exist a regular torus with vanishing twist for each value of $h$ close to the critical one and all other tori with the same $h$ have non-vanishing twist. However, it was not clear what the behaviour would be in other situations.

In our case though, the eigenvalues of the Hessian of the Hamiltonian function $H$ at the fixed point are $\pm 1$, thus real. We have the following result:

\begin{lemm}
The twist $\mcT(l,h)$ has the expansion
\begin{align*}
2\pi\dfrac{\mcT(l,h)}{\sqrt{\lam \mu}} &= -\dfrac{2h}{l^2+4\lam \mu h^2} - \dfrac{1}{4\lam}\dfrac{lh(l^2+12\lam \mu h^2)}{(l^2+4 \lam \mu h^2)^2} \\
&+\dfrac{h}{128\lam^2} \dfrac{69l^6+772\lam \mu l^4h^2+2544\lam^2 \mu^2 l^2h^4+3264\lam^3 \mu^3 h^6}{(l^2+4\lam \mu h^2)^3}+... \\
&+ \left( -\dfrac{9h}{64\lam^2}+\dfrac{75lh}{512\lam^3}-\dfrac{525h}{32768\lam^4}(7l^2+12\lam \mu h^2)+...\right) \log \dfrac{32\lam}{\sqrt{l^2+4\lam \mu h^2}}
\end{align*}
around the focus-focus point. 
\end{lemm}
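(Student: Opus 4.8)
The plan is to use the identity $\mcT=-\partial^2 I/\partial l^2$, which follows from the definitions \eqref{w1} and \eqref{T1} (namely $\mcT=\partial W/\partial l$ and $W=-\partial I/\partial l$), and to differentiate the action expansion \eqref{act2} twice in $l$; equivalently one may differentiate the rotation-number expansion \eqref{w2} once, exactly as the period and rotation-number lemmas were obtained from \eqref{act2}. This is legitimate because \eqref{act2} is a convergent expansion of a function analytic away from the cut through the focus-focus value, coming from expansions of complete elliptic integrals, so it may be differentiated term by term. Each rational summand of \eqref{act2} — a polynomial in $l,h$ over a power of $l^2+4\lam\mu h^2$, possibly multiplied by $\arctan\!\big(l/(2\sqrt{\lam\mu}\,h)\big)$, for which $\partial_l\arctan\!\big(l/(2\sqrt{\lam\mu}\,h)\big)=2\sqrt{\lam\mu}\,h/(l^2+4\lam\mu h^2)$ — differentiates into terms of the same shape; and the logarithmic term $J(l,h)\log\frac{32\lam}{\sqrt{l^2+4\lam\mu h^2}}$ differentiates, using $\partial_l\log\frac{32\lam}{\sqrt{l^2+4\lam\mu h^2}}=-l/(l^2+4\lam\mu h^2)$, into $(\partial_l J)\log\frac{32\lam}{\sqrt{l^2+4\lam\mu h^2}}$ plus a rational correction. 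Carrying out the second $l$-derivative, the logarithmic coefficient of $2\pi\mcT$ comes out as $-\partial_l^2 J=\partial_l W^\alpha$ (with $W^\alpha=-\partial_l J$ the coefficient accompanying \eqref{w2}), which reproduces the displayed coefficient $-\tfrac{9h}{64\lam^2}+\tfrac{75lh}{512\lam^3}-\dots$ times $\sqrt{\lam\mu}$; the leading rational term, $-2\sqrt{\lam\mu}\,h/(l^2+4\lam\mu h^2)$ in $2\pi\mcT$, only emerges once the contributions from the second derivative of the $l\arctan(\cdot)$ term and of the $J\log(\cdot)$ term are added together, so the $\arctan$ and $\log$ pieces of \eqref{act2} must be handled in tandem.

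Putting all rational contributions over the common denominator $(l^2+4\lam\mu h^2)^m$ and collecting orders — after the rescaling $l\mapsto l\eps$, $h\mapsto h\eps$ used throughout the paper — gives the stated expansion, and reverting the scaling fixes the powers of $\sqrt{\lam\mu}$. As an independent check, in the spirit of Lemma \ref{per3} and the rotation-number lemma, I would also exhibit $\mcT$ as an elliptic integral over the curve $\Gamma_{l,h}$ of \eqref{curv}: differentiating the integral \eqref{act1} for $I$ twice under the integral sign (the boundary contributions at the turning points $\zeti_2,\zeti_3$ vanish, these being simple zeros of $\mfP$ at which the $\arccos$ integrand is continuous) produces an integral of a rational function of $\mfp_2$ against $d\mfp_2/\mfw$; a partial-fraction decomposition reduces this to $K$, $E$ and $\Pi$ as in \eqref{Ia}--\eqref{Ib}, and re-expanding these in the singular limit $\mfk\to1$ with the series from the proof of Lemma \ref{act3} must give the same answer.

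I expect the only genuine difficulty to be bookkeeping: keeping the numerous rational summands — in particular the one generated by differentiating the argument of the logarithm and the ones coming from the $\arctan$ term — over a common denominator and verifying that the numerators collapse to the displayed polynomials, together with tracking the factors of $\sqrt{\lam\mu}$ when passing between scaled and unscaled variables. There is no conceptual obstacle, since the shape of the answer — a polynomial numerator over a power of $l^2+4\lam\mu h^2$, plus a logarithmic term whose coefficient extends smoothly to the origin — is forced by the structure already present in \eqref{act2} and \eqref{w2}.
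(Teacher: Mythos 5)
Your proposal is correct and follows essentially the same route as the paper, whose entire proof is the one-liner ``Directly from \eqref{w2} and \eqref{T1}'': differentiate the already-established rotation-number expansion once in $l$ (equivalently, the action expansion twice). The extra elliptic-integral cross-check you describe mirrors the structure of the earlier period and rotation-number proofs but is not carried out by the paper for this particular lemma.
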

\begin{proof}
Directly from \eqref{w2} and \eqref{T1}.
\end{proof}

\begin{co} The twist $\hat{\mcT}(j,l)$ as a function of the values $j$ of the imaginary action and $l$ of the angular momentum has the expansion
\begin{align*}
2\pi\hat{\mcT}(j,l) = \dfrac{1}{|z|^2}&\left(-j -\dfrac{jl}{4\lambda} + \dfrac{1}{128\lambda^2}j(23j^2+35l^2) - \dfrac{1}{128\lambda^3}jl(26j^2 + 27l^2)+...\right. \\
 &+ \left.(\log |z|-\log (32\lambda)) \left( \dfrac{1}{128\lambda^2}j(9l^2+9j^2) - \dfrac{1}{512\lambda^3}jl(33j^2 + 33l^2) + ... \right) \right),
\end{align*}  where $z=j+il$.
\end{co}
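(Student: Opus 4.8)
The statement is the evaluation $\hat{\mcT}(j,l) = \mcT(l,B(j,l))$ of the twist on the modified Birkhoff normal form $h = B(j,l)$ of Lemma \ref{lact}, so in principle it follows by substituting that normal form into the expansion of $\mcT(l,h)$ established above, exactly as the corollaries for $\That(j,l)$ and $\What(j,l)$ followed from the corresponding substitution. However, the cleanest route — and the one that makes transparent why the answer can be written with a single overall factor $1/|z|^2$ — goes through the structural form of the rotation number of Dullin \& \vungoc \cite{DV}, namely $2\pi\What(j,l) = -A(j,l)\,\Re(\log z) - \Im(\log z) + \varsigma(j,l)$, where $A = (\partial B/\partial l)/(\partial B/\partial j)$ and $\varsigma$ are both smooth near the origin and whose series were already worked out above. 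Differentiating $\What(j,l) = W(l,B(j,l))$ and eliminating $(\partial W/\partial h)(l,B)$ via $\partial_j\What = (\partial W/\partial h)(l,B)\,\partial_j B$ yields the identity
\[
\hat{\mcT}(j,l) \;=\; \partial_l\What(j,l) \,-\, A(j,l)\,\partial_j\What(j,l).
\]

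Plugging $2\pi\What = -A\log|z| - \arg z + \varsigma$ into this identity and using $\partial_l\log|z| = l/|z|^2$, $\partial_j\log|z| = j/|z|^2$, $\partial_l\arg z = j/|z|^2$ and $\partial_j\arg z = -l/|z|^2$, the logarithmic and the singular parts organize themselves into
\[
2\pi\hat{\mcT}(j,l) \;=\; \bigl(A\,\partial_j A - \partial_l A\bigr)\log|z| \;+\; \frac{(A^2-1)\,j - 2A\,l}{|z|^2} \;+\; \bigl(\partial_l\varsigma - A\,\partial_j\varsigma\bigr),
\]
where $A$, $\varsigma$, and hence $A\partial_jA-\partial_lA$ and $\partial_l\varsigma-A\partial_j\varsigma$ are analytic at the origin. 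Separating out the constant $\log(32\lam)$ sitting inside $\varsigma$ so as to form the combination $\log|z|-\log(32\lam)$, and then rewriting the polynomial pieces over the common denominator $|z|^2$ (using $1 = |z|^2/|z|^2$), puts the expression into exactly the shape of the statement. The remaining work is mechanical: insert the series for $A$ and $\varsigma$ from above (equivalently the series for $B$ from Lemma \ref{lact}), expand to the displayed order, and read off the coefficients — most safely with a computer algebra system, as for the earlier expansions.

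I expect the main difficulty to be purely bookkeeping, and it appears most sharply if one instead takes the direct-substitution route. There one substitutes $\mfh = \mfB(\mfj,\mfl) = \tfrac12\mfj + \tfrac1{16}\mfj\mfl - \cdots$ into the scaled twist; the key collapse is $\mfl^2 + 4\mfB(\mfj,\mfl)^2 = \mfj^2+\mfl^2 + O(\text{degree }3)$, which turns each denominator $(\mfl^2+4\mfh^2)^k$ into $(\mfj^2+\mfl^2)^k$ times a series equal to $1$ at the origin and turns $\log\sqrt{\mfl^2+4\mfh^2}$ into $\log|\mfz| + (\text{a series in the higher-order correction})$, with $\mfz = \mfj + i\mfl$. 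Because $\mfB$ carries no logarithm but is fed into both the rational and the logarithmic parts of $\mcT(l,h)$, these expansions of $\log(1+\cdots)$ throw off extra non-logarithmic terms that must be carefully returned to the rational part; one must also check that no term with a bare power of $\mfh$ in the denominator remains, which holds because every such term in $\mcT(l,h)$ comes with a compensating factor of $h$ in the numerator, as is already visible in its leading term $-2h/(l^2+4\lam\mu h^2)$. The structural identity above is the natural consistency check here: it fixes the overall $1/|z|^2$ form and the coefficient $A\partial_jA-\partial_lA$ of $\log|z|$ in advance, so one only needs to verify the remaining analytic data term by term.
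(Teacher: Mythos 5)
Your proof is correct, and it takes a genuinely different route from the paper. The paper obtains this corollary (as with $\That(j,l)$ and $\What(j,l)$) by directly substituting $h = B(j,l)$ from Lemma~\ref{lact} into the expansion of $\mcT(l,h)$ and reorganizing the resulting rational--plus--logarithmic expression around the common denominator $|z|^2$, a bookkeeping exercise you correctly identify as the main nuisance of the direct route. Your alternative starts from the chain-rule identity $\hat{\mcT}(j,l)=\partial_l\What-A\,\partial_j\What$ (which is correct: from $\partial_l\What = W_l + W_h B_l$ and $\partial_j\What = W_h B_j$ one eliminates $W_h$), and then feeds in the Dullin--\vungoc form $2\pi\What = -A\Re\log z - \Im\log z + \varsigma$, which the paper has already established and for which the series of $A$ and $\varsigma$ are explicitly listed. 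This rewrites everything as $(A\partial_j A - \partial_l A)\log|z| + [(A^2-1)j - 2Al]/|z|^2 + (\partial_l\varsigma - A\partial_j\varsigma)$, which after absorbing the $\log(32\lam)$ from $\varsigma$ and putting the analytic piece over $|z|^2$ reproduces the stated expansion; I checked the leading coefficients $-j$, $-jl/4\lambda$, $\tfrac{j}{128\lambda^2}(23j^2+35l^2)$, and $\tfrac{9j}{128\lambda^2}$ in the log part and they come out right. What this buys you over the paper's route is that the overall $1/|z|^2$ factor and the specific combination sitting in front of $\log|z|$ are determined \emph{a priori} by the structural identity rather than emerging from a series cancellation, so the remaining work is a clean computation with the analytic data $A$ and $\varsigma$ rather than a manipulation of rational functions of $(l,h)$ followed by a substitution of $B$. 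Both routes need the same input (the Birkhoff series $B$, equivalently $A$ and $\varsigma$) and the same amount of mechanical expansion at the end; yours is the more transparent organization of the calculation.
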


We see that the expansion of the twist is an odd function of $j$. More precisely,

\begin{theo}
The twist of the spin-oscillator vanishes when the Hamiltonian vanishes, i.e.\ $\mcT=0$ on $H^{-1}(0)$.
\label{theotwis}
\end{theo}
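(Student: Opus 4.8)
The plan is to deduce the vanishing of the twist on $\{H=0\}$ not from the series expansion around the focus-focus point (which on its own only controls a neighbourhood of the focus-focus point) but from the \emph{exact} reflection symmetry of the reduced Hamiltonian recorded in Remark \ref{symmet}. That remark gives, for every regular value $(l,h)$ of $F$,
\[
I(l,h) + I(l,-h) \;=\; \frac{\text{Area}_l}{2\pi} \;=\; 2\lam + \min\{0,l\},
\]
which is the quantitative form of the fact that the discrete symmetries $T_1,T_2$ of \eqref{transf} preserve the foliation and send $h\mapsto -h$ (see also Remark \ref{symmet2}). Everything else is differentiation of this identity in $l$.

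First I would fix $l$ in one of the intervals $(-2\lam,0)$ or $(0,+\infty)$; then $(l,0)$ is a regular value of $F$ — the only critical values on the line $h=0$ are the focus-focus value $(0,0)$ and the elliptic-elliptic value $(-2\lam,0)$ — so $I$, and hence $W=-\partial_l I$ and $\mcT=\partial_l W=-\partial_l^2 I$ by \eqref{w1} and \eqref{T1}, are genuine smooth functions near $(l,0)$. On such a neighbourhood the right-hand side $2\lam+\min\{0,l\}$ is an affine function of $l$ (constant on $(0,\infty)$, equal to $2\lam+l$ on $(-2\lam,0)$). Differentiating the identity twice with respect to $l$ — the entry $-h$ being constant in $l$, the chain rule contributes only $(\partial_l^2 I)(l,-h)$ — the affine right-hand side disappears and one is left with
\[
(\partial_l^2 I)(l,h) + (\partial_l^2 I)(l,-h) = 0,
\qquad\text{i.e.}\qquad
\mcT(l,h) = -\mcT(l,-h).
\]
Thus the twist is an odd function of $h$, and setting $h=0$ gives $\mcT(l,0)=-\mcT(l,0)$, hence $\mcT(l,0)=0$. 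As $l$ runs over $(-2\lam,0)\cup(0,\infty)$ this exhausts the regular part of $H^{-1}(0)$, which is the assertion of the theorem.

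I do not expect a genuine obstacle here; the only thing needing care is the domain bookkeeping — verifying that $(l,0)$ is a regular value so that $\mcT$ is a bona fide function there rather than a formal series, and that $\min\{0,l\}$ is affine (not merely continuous) on the interval in question, the corner at $l=0$ being precisely the focus-focus value where $\mcT$ is undefined anyway. As a cross-check one may note that the series for $\hat{\mcT}(j,l)$ in the corollary just above the theorem has every term divisible by $j$, and that $j=0$ is equivalent to $h=0$ near the focus-focus point because the Birkhoff normal form $B$ of Lemma \ref{lact} is odd in $j$ with $B(0,l)=0$; this recovers $\mcT=0$ on $\{H=0\}$ in a neighbourhood of $m$ and is consistent with the global statement above.
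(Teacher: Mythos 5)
Your proof is correct and, while it rests on the same discrete symmetry that ultimately drives the paper's argument, it takes a genuinely more direct route. The paper first establishes that the Taylor series $S(j,l)$ has only odd powers of $j$ (Theorem~\ref{noeven}), then reads off from the decomposition~\eqref{act7} of $\mcA(z)$ that $\hat{\mcT}(j,l)=-\tfrac{1}{2\pi}\partial_l^2\mcA$ is odd in $j$, and finally translates back to $h$ via the Birkhoff normal form. That chain of reasoning lives entirely in the regime where the Birkhoff normal form and the germ $\mcA(z)$ make sense, so strictly speaking it establishes $\mcT(l,0)=0$ only for $l$ close to $0$. You instead differentiate the exact global area identity $I(l,h)+I(l,-h)=2\lambda+\min\{0,l\}$ from Remark~\ref{symmet} twice in $l$; since the right-hand side is affine in $l$ on each of $(-2\lambda,0)$ and $(0,\infty)$, the second derivative kills it and yields $\mcT(l,h)=-\mcT(l,-h)$, hence $\mcT(l,0)=0$, on the entire line $h=0$ within $B_r$. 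This is cleaner (it bypasses \eqref{act7} and Theorem~\ref{noeven} entirely), and it buys the full global statement of the theorem rather than a germ of it. The bookkeeping you flag — that $(l,0)$ for $l\in(-2\lambda,0)\cup(0,\infty)$ is a regular value so that $I$, $W=-\partial_l I$ and $\mcT=-\partial_l^2 I$ are honest smooth functions there, and that $\min\{0,l\}$ is affine away from the corner $l=0$ where $\mcT$ is undefined anyway — is exactly the right thing to check, and it holds. Your cross-check against the $j$-divisibility of the series for $\hat{\mcT}$ is likewise consistent with the paper's local computation.
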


\begin{proof}
It is more convenient to work with the twist $\hat{\mcT}(j,l)$ as a function of the values $j$ of the imaginary action and $l$ of the angular momentum. It can be obtained directly from the action integral $\mcA(z)$ via
$$ 2\pi\hat{\mcT}(j,l) =2\pi \dfrac{\partial \What}{\partial l} (j,l) = -\dfrac{\partial^2 \mcA}{\partial l^2} (j,l).$$

Let us now look at the structure of $\mcA(z)$ in \eqref{act7}. The first term, $2\pi \lambda$, is a constant and therefore vanishes after the derivative. The second term, $- j \log |z|$, is odd in $j$ and therefore, after taking two derivatives with respect to $l$, it will still be odd in $j$. More precisely,
$$ -\dfrac{\partial^2}{\partial l^2} \left( - j \log |z| \right) = 	j\dfrac{j^2-l^2}{|z|^4}.$$

The third term, $j$, vanishes after taking the derivative. The fourth term, $l \arg(z)$, is odd in $j$ and it remains so after two derivatives:
$$ -\dfrac{\partial^2}{\partial l^2} \left( l \arg(z) \right) = -\dfrac{2j^3}{|z|^4}.$$ 

From the proof of Theorem \ref{noeven} we see that the last term, $S(j,l)$, is odd in $j$ up to a linear factor in $l$ that vanishes after differentiating twice. This means that $\hat{\mcT}(j,l)$ is an odd function of $j$ and in particular, it vanishes if $j=0$. Since $J$ vanishes when $h=0$ we conclude that $\mcT(l,0)=0$, which is what we wanted to see.
\end{proof}

The fact that the twist vanishes at $h=0$ for all values of $l$ is thus a consequence of the discrete symmetry of the foliation. A possible way to interpret this property is in terms of {\em superintegrability} (see Fassò \cite{Fa} for an overview). When $h=0$, a new conserved quantity appears. This additional integral has two expressions given by 
$$
K_{uv}(x,y,z,u,v):=\arg \left(u+iv\right), \quad
K_{xy}(x,y,z,u,v):=\arg (x+iy) \,.
$$ 
On the level set $H=0$ these two functions are dependent, so they really only define one additional integral. Applying Mischenko-Fomenko's theorem of `noncommutative integrability' (Nekhoroshev \cite{Ne}, Mischenko \& Fomenko \cite{MF}) --- the analog of Liouville-Arnold's theorem for superintegrable systems --- we obtain a fibration in terms of 1-dimensional tori. 
\begin{lemm}
If the Hamiltonian vanishes, $H=0$, then the spin-oscillator has a third constant of motion, $K_{uv}(x,y,z,u,v)=\arg(u+iv)$
and $K_{xy}(x,y,u,v) = \arg(x+iy)$. Together they define an integral that is smooth almost everywhere, 
and all orbits of the system are closed.
\end{lemm}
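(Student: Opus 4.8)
The plan is to bypass the rotation-number expansion \eqref{w2} (and the twist) altogether --- using the discrete symmetry \eqref{transf} only as motivation --- and to exhibit the extra integral by a direct bracket computation. First I would show that along the invariant hypersurface $H^{-1}(0)$ each of $K_{uv}=\arg(u+iv)=\varphi$ and $K_{xy}=\arg(x+iy)=\theta$ Poisson-commutes with $H$; then that where both are defined they differ only by a locally constant $\pm\tfrac{\pi}{2}$, so that they glue to one function smooth off a set of measure zero; and finally that, together with $L$, this function realises the Mischenko--Fomenko situation already invoked above, with $1$-dimensional invariant tori, whence the orbits close up.

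For the commutation relations I would use the coordinates of \eqref{eq1}, in which $\omega=dp_1\wedge dq_1+dp_2\wedge dq_2$ with $q_1=\theta$, $q_2=\varphi-\theta$, and $H=G(p_1,p_2)\cos q_2$ for $G(p_1,p_2):=\sqrt{-p_2(p_2-p_1)(p_2-p_1-2\lambda)/(2\lambda^2\mu)}$. Since $K_{xy}=q_1$ and $K_{uv}=q_1+q_2$, a one-line bracket computation gives
\begin{equation*}
\{H,K_{xy}\}=\partial_{p_1}H=(\partial_{p_1}G)\cos q_2,\qquad
\{H,K_{uv}\}=\partial_{p_1}H+\partial_{p_2}H=(\partial_{p_1}G+\partial_{p_2}G)\cos q_2 .
\end{equation*}
On $H^{-1}(0)$ one has $G\cos q_2=0$, so wherever $G\neq 0$ necessarily $\cos q_2=0$ and both brackets vanish. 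The remaining locus $\{G=0\}$ is made up of the poles of $\mbS^2$ and of $\{(u,v)=0\}$, where $q_2$ degenerates; there I would recompute in Cartesian coordinates, obtaining $\{H,K_{xy}\}=zH/(\lambda(x^2+y^2))$ on $\{(x,y)\neq 0\}$ and $\{H,K_{uv}\}=H/(\mu(u^2+v^2))$ on $\{(u,v)\neq 0\}$, which again vanish on $H^{-1}(0)$. Thus $K_{uv}$ is a smooth first integral of $\mcX_H$ on $H^{-1}(0)\cap\{(u,v)\neq 0\}$ and $K_{xy}$ on $H^{-1}(0)\cap\{(x,y)\neq 0\}$.

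These two open sets exhaust $H^{-1}(0)$ up to the points $(0,0,\pm 1,0,0)$; on their overlap the relation $H=0$ forces $\cos(\theta-\varphi)=0$, i.e.\ $\theta-\varphi\equiv\pm\tfrac{\pi}{2}$, so $K_{uv}$ and $K_{xy}$ differ by a locally constant shift, share their level sets, and glue to a single integral $K$ of $\mcX_H$, smooth outside a measure-zero set --- the ``integral'' of the statement. The flow $\mcX_H$ preserves $L$ (everywhere) and $K$ (on $H^{-1}(0)$), with $\{L,K\}=1$; this is exactly the rank-one non-commutative (Mischenko--Fomenko) structure invoked above, so its isotropic tori are the common level sets $\{L=l\}\cap\{K=k\}\cap H^{-1}(0)$, which are compact because $L$ is proper, hence circles wherever $dL,dK$ are independent. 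As $\mcX_H$ is tangent to and non-vanishing on the generic such circle, it sweeps it out; the only orbits in $H^{-1}(0)$ not of this form lie on the two singular fibres $F^{-1}(0,0)$ and $F^{-1}(-2\lambda,0)$ and are the focus-focus equilibrium, the elliptic-elliptic equilibrium, and orbits homoclinic to the former --- all with compact closure. Hence every orbit of the spin-oscillator on $H^{-1}(0)$ closes up.

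The bracket identities are immediate; the only real work is the bookkeeping at the loci where $\theta$ or $\varphi$ ceases to be smooth --- one must check that $K$ genuinely extends across the poles of $\mbS^2$ and the origin of $\R^2$, and that the invariant $1$-manifolds are honest circles rather than chains of equilibria and heteroclinic arcs that merely appear to close up in the singular $(q_2,p_2)$-picture --- which is why I would carry this part out in Cartesian coordinates adapted to those loci.
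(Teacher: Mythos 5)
Your proof takes essentially the same route as the paper's: exhibit the extra integrals, compute $\{H,K_{uv}\}$ and $\{H,K_{xy}\}$ in Cartesian coordinates (where they are proportional to $H$ and hence vanish on $H^{-1}(0)$), note that the two expressions are simultaneously singular only at the equilibria $(0,0,\pm1,0,0)$, and conclude superintegrability. You are somewhat more careful than the paper on two points the original proof leaves implicit---the observation that on $H^{-1}(0)$ the two branches $K_{uv}$ and $K_{xy}$ differ by a locally constant $\pm\tfrac{\pi}{2}$ so genuinely glue to one integral, and the use of properness of $L$ together with the Mischenko--Fomenko structure to pass from ``extra integral'' to ``orbits close up''---but the core argument is the same.
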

\begin{proof}
The symplectic form of the system \eqref{syst} is $\omega = \lambda\, \omega_{{\mathbb S}^2} \oplus \mu\, \omega_{\mathbb{R}^2}$. Therefore,
$$ \{H,K_{uv}\} =\dfrac{\partial H}{\partial u}\dfrac{\partial K_{uv}}{\partial v}\{u,v\}+\dfrac{\partial H}{\partial v}\dfrac{\partial K_{uv}}{\partial u}\{v,u\} = \dfrac{x}{2}\dfrac{u}{u^2+v^2}\dfrac{1}{\mu} + \dfrac{y}{2}\dfrac{(-v)}{u^2+v^2}\dfrac{(-1)}{\mu} = \dfrac{1}{\mu} \dfrac{H}{u^2+v^2}$$ and consequently $\{H,K_{uv}\}=0$ if $H=0$. By a similar calculation $\{H, K_{xy} \} = \frac{\lambda H z}{ x^2 + y^2}$ which again vanishes when $H=0$. 
Each expression for the additional integral has a singularity at the origin, but they are simultaneously singular only when $x=y=u=v=0$, i.e.\ at the equilibrium points $z=\pm1$ of the system. For all other orbits we thus have an additional smooth third integral, and hence all orbits of the system for $H=0$ are closed.
\end{proof}

Note that $K_{uv}$ and $L$ are not in involution. More precisely,
$$\{L,K_{uv}\}=\dfrac{\partial L}{\partial u}\dfrac{\partial K_{uv}}{\partial v}\{u,v\}+\dfrac{\partial L}{\partial v}\dfrac{\partial K_{uv}}{\partial u}\{v,u\} = \mu u \dfrac{u}{u^2+v^2} \dfrac{1}{\mu} + \mu v \dfrac{(-v)}{u^2+v^2} \dfrac{(-1)}{v} =1,
$$
and similarly $\{ L, K_{xy} \} = -1$.

The geometric meaning of the additional integral is that the orbits are restricted to lines through the origin in the 
$(x,y)$-plane and the $(u,v)$-plane. More precisely, introduce a single new variable $a$ instead of $x$ and $y$,
and a single new variable $b$ instead of $u$ and $v$, with the relations 
$x = a \cos\theta$, $y=a \sin\theta$, 
$u = -b \sin\theta$, $v= b \cos\theta$ for some angle $\theta$. 
Then $H=0$ and $J = \frac{1}{2}\mu b^2 + \lambda( z - 1)$, and it implies that $\dot b = -\frac{1}{2}a$ 
and $-2 \ddot b = \dot a = -\frac{1}{2}b z$ where $z$ can be expressed in terms of $b$ using the integral $J$.
This second-order differential equation for $b$ has a quartic integral, and once again, all orbits are closed when $H=0$.


\section{Calculation of the twisting-index invariant}
The coupled spin-oscillator \eqref{syst} is a semitoric system with one focus-focus singularity located at $m=(0,0,1,0,0)$. As mentioned in Remark  \ref{twi1}, the twisting-index invariant is completely specified by finding a representative of the polygon invariant with index $k=0$. From there, the indices for the rest of the polygons in the equivalence class can be reconstructed by knowing that $\Z_2$ does not act on the index and that $\mcG \simeq \Z$ acts by addition.

More precisely, the action  of $ \Z_2 \times \mcG$ on $W\text{Polyg}(\R^2) \times \Z$ is 
\begin{equation}
(\epsilon', T^{k'}) \star (\De, b_{\ka}, \epsilon, k) = (t_u(\De), b_{\ka}, \epsilon'\epsilon, k+k'),
\label{act11}
\end{equation} where $u = (\epsilon-\epsilon')/2$.  As a consequence, an orbit of this action is completely specified by finding a weighted polygon $(\De,b_{\ka},\epsilon)$ with index $k=0$. The rest of the elements of the orbit can be calculated from \eqref{act11}.

\begin{theo}
The twisting-index invariant of the coupled spin-oscillator system is the orbit generated by the action of $\Z_2 \times \mcG$ on the element $(\De,b_{\ka},\epsilon,k)$, where $\De$ is the polygon depicted in Figure \ref{twistfig}, $b_{\ka}=\{(0,y)\;|\;y \in \R\}$, $\epsilon=+1$ and $k=0$. 
\label{inv2}
\end{theo}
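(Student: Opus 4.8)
**The plan is to compute the privileged momentum map $\nu=(L,H_p)$ explicitly and compare it with a concrete toric momentum map $\mu$ for the coupled spin-oscillator, reading off the twisting index from the relation $\mu=T^k\nu$.**

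First I would recall the two objects involved. On the one hand, the privileged momentum map $\nu$ is characterized by the condition that its Hamiltonian vector field $\mcX_{H_p}$ equals the normalized $2\pi$-periodic field $\mcX_p$ of \eqref{Xp}, with $\lim_{x\to m}H_p=0$. By the discussion in \S\ref{sec:deftaylor} and \S\ref{sec:deftwistingIndex}, the privileged action $H_p$ is, up to the normalization and the $\Re(z\log z-z)$ correction, exactly the desingularized action $S$; concretely, near the focus-focus value one has $H_p(l,h)$ determined by $\sigma$, and hence by $S(l,j)$ computed in Theorem \ref{inv1}. What matters for the twisting index is only the \emph{affine behaviour} of $\nu$ on the regular set $B_r$: since $\mu$ and $\nu$ differ by a fixed element $T^k\in\mcG$ on all of $F^{-1}(V)$, it suffices to compare the developing maps of the two integral-affine charts along \emph{one} loop (or, equivalently, to compare the slopes of one distinguished edge family) near the focus-focus critical value and far from it. On the other hand, the polygon $\De$ in Figure \ref{twistfig} is the one already determined by Pelayo \& V\~u Ng\d{o}c in \cite{PV3} for the choice $\epsilon=+1$, so I have an explicit toric momentum map $\mu=f_{+1}\circ F$ at hand.

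The key steps, in order: (1) Fix the cut direction $\epsilon=+1$ and the vertical line $b_0=\{l=0\}$, and write down the toric momentum map $\mu=(L,\mu_2)$ from the polygon in Figure \ref{twistfig}; this amounts to integrating the affine structure of $B_r$ and choosing the $\mcJ$-representative specified by the figure. (2) Construct the privileged momentum map $\nu=(L,H_p)$: using \eqref{Xp} and the functions $\tau_1,\tau_2$ — equivalently $\sigma_1,\sigma_2$ of \eqref{taus} — express $H_p$ via $\partial H_p/\partial l = \sigma_1\circ\phi$ (up to the $T$-normalization $\phi(l,h)=(\phi_1(l,h),l)$), so that $H_p$ is, modulo the $2\pi$-periodic corrections, the function whose differential is $\sigma$; here I invoke Theorem \ref{inv1} to know $S$ and hence the asymptotics of $H_p$ near $c=(0,0)$. (3) Compute the monodromy-free affine chart of $\nu$ away from the cut: because $\nu$ is built to have $H_p\to 0$ at $m$ and the correct logarithmic singularity, the edges of the ``privileged polygon'' $\De_\nu:=\overline{\nu(M)}$ have the canonical slopes $0,\infty$ at the elliptic-elliptic vertex and at the transversally-elliptic edges. (4) Compare $\De$ (Figure \ref{twistfig}) with $\De_\nu$: both share the vertical line $b_0$ and the same lower boundary (by the height-invariant normalization and properness of $L$), so they differ only by some $T^k$ acting on the half-plane $l>0$; read off $k$. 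The claim is that this $k$ is $0$, i.e. that the polygon in the figure \emph{is} the privileged one. (5) Finally, since $\Z_2$ acts trivially on the index and $\mcG\cong\Z$ acts by translation (Remark \ref{twi1} and \eqref{act11}), the single datum $(\De,b_0,+1,0)$ generates the whole orbit, which gives Figure \ref{twistbig}; this yields Theorem C as stated.

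The main obstacle I expect is step (2)–(4): pinning down that the polygon of Figure \ref{twistfig} is \emph{exactly} the privileged one rather than a $T^{\pm1}$-translate. This requires controlling the constant of integration in $H_p$ (the condition $\lim_{x\to m}H_p=0$) and correctly matching it with the edge directions of the known polygon — in practice, tracking the $\arctan$ term and the coefficient of $\log$ in the action expansion \eqref{act2} and verifying that the resulting slope of the ``new'' edge created by the cut at $c$ has integer jump $0$ relative to $\mu$. Equivalently, one must check that the affine vector field generating $\nu_2$ near $c$, when transported around the focus-focus value, produces precisely the monodromy matrix $T$ used to build $\De$ with no extra twist. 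The discrete symmetries $T_1,T_2$ of \eqref{transf}, which force the $h\mapsto -h$ antisymmetry exploited in Theorem \ref{noeven} and Theorem \ref{theotwis}, can be used to rule out a nonzero twist: a nonzero $k$ would be incompatible with the up-down symmetry of the foliation that identifies $Y^+$ and $Y^-$ and makes the height invariant equal to half the sphere area. Once $k=0$ is established for one representative, steps (1) and (5) are bookkeeping.
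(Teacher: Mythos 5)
Your plan coincides in substance with the paper's proof: both construct the privileged momentum map $\nu=(L,H_p)$ from the periodic vector field $\mcX_p$ and the normalization $\lim_{q\to m}H_p=0$, identify $H_p=\tfrac{1}{2\pi}(\mcA-\mcA_0)$ using \eqref{inv5} and \eqref{act7}, and deduce $k=0$ from \eqref{twistwis}. The paper streamlines the comparison in your steps (3)--(4) by noting that $\nu$ is \emph{itself} a generalised toric momentum map in the sense of Theorem \ref{thmpol} (it preserves $L$, is affine off the cut $b_0^{+1}$, and maps onto a rational convex polygon), so $\nu(M)$ is automatically a representative of the polygon invariant for which $\mu=\nu$ on the nose, giving $k=0$ without matching developing maps edge by edge; the only residual work is to identify $\nu(M)$ using $\mcA_0=2\pi\lambda$ and the series for $\mcA$, which yields the polygon of Figure \ref{twistfig}. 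One caution on your closing remark: the $H\mapsto -H$ discrete symmetry does not by itself rule out $k\neq0$. The maps $T_1,T_2$ reverse the cut direction $\epsilon$ together with $H$, so they merely exchange the $(\epsilon=+1)$ and $(\epsilon=-1)$ members of the same $\Z_2\times\mcG$-orbit already encoded in \eqref{act11}; and the height invariant constrains the vertical position of the focus-focus value, not the shear $T^k$, so that symmetry argument cannot replace the explicit computation of $\nu(M)$.
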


\begin{proof}

Let $W \subseteq M$ be a neighbourhood of the focus-focus singularity $m=(0,0,1,0,0)$ and $V:=F(W)$. Let us also make the choice $\epsilon=+1$. We also know $F(m)=(L,H)(m)=(0,0)$, so in particular $\ka=0$ and $b^{\epsilon}_{\ka}=b_0^{+1}$. In section \S \ref{sec:deftwistingIndex} we have seen that there is a unique smooth function $H_p:F^{-1}(V \backslash b_0^{+1}) \to \R$ that extends to a continuous function on $F(V)$, satisfies $\lim_{q \to m} H_p (q) =0$ and whose Hamiltonian vector field coincides with the vector field $\mcX_p$ from \eqref{Xp}. In other words, $H_p$ must be of the form $f \circ \Phi$, where  $f=f(z)$ is a function whose derivatives  are $\partial_i f = \tau_i/2\pi$, $i=1,2$, the functions $\tau_i$ are as in \eqref{taus} and $\Phi= \phi \circ F$ as in \S \ref{sec:deftwistingIndex}. This implies that $f$ must be of the form 
$$2\pi f(z) = S(z)-\Re(z \log z -z) + \mbox{ const.},$$
where $\log$ has the cut along the positive real axis.

Comparing with \eqref{inv5} and \eqref{act7} we see that $f(z) := \frac{1}{2\pi} (\mcA(z)-\mcA_0)$ and $\phi(l,h) := \phi(J(l,h),l)$ have the desired form, thus by uniqueness, we must have $H_p = f \circ \phi \circ F$, which can be defined continuously in all $M$. The privileged momentum map will then be $\nu = (L,H_p)$ and it will coincide with one of the polygons of the polygon invariant. Even though we only know $S$ up to finite order, it is enough to identify which is the polygon with index $k=0$, as shown in Figure \ref{twistfig}.

It is a polygon with vertices at $(-2\lam,-\lam)$ and $(0,\lam)$. The first vertex corresponds to the elliptic-elliptic singularity. The inner point $(0,0)$ is the image of the focus-focus singularity. The polygon $\De$, together with $b_0^{+1}$ and $\epsilon=+1$, is a weighted polygon of the polygon invariant, so it is also the image of a momentum map $\mu: M \to \De$. For this momentum map we will have $\mu = \nu$ and using \eqref{twistwis} we conclude that $k=0$. The association of $k=0$ to the polygon in Figure \ref{twistfig} completely determines the twisting-index invariant.
\end{proof}

 \begin{figure}[ht!]
\centering
\subfloat[]{
  \includegraphics[width=0.46\textwidth]{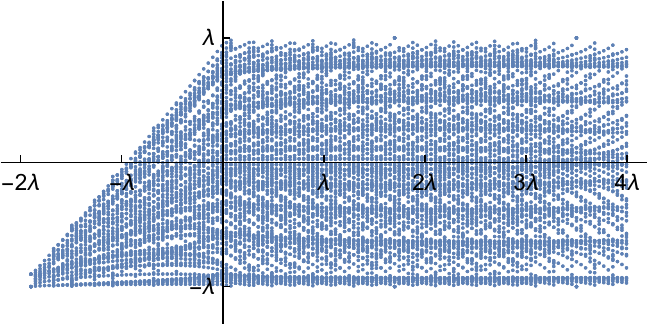}
}
\subfloat[]{
  \includegraphics[width=0.45\textwidth]{Polyg.pdf}
}
 \caption{\small Numerical plot of the image of the privileged momentum map of the coupled spin oscillator (a) and the corresponding weighted polygon $\De$ of the polygon invariant (b). This polygon has thus index $k=0$. The numerical plot is made by sampling the phase space with 73700 points and using $S$ up to second order. The image of the elliptic-elliptic singularity is $(-2\lam,-\lam)$ and the image of the focus-focus singularity is at $(0,0)$.}
 \label{twistfig}
\end{figure}

The polygon symplectic invariant of the coupled spin-oscillator has been calculated by Pelayo \& \vungoc in Figure 5 of \cite{PV3}. We see that it coincides with the one in Figure \ref{twistfig} except for a horizontal translation by $\lambda$ and a vertical translation by $\lambda$. The horizontal translation is caused by our definition of the function $L$ in \eqref{syst} which differs from theirs by $\lambda$ since we work with $(L,H)(m)=(0,0)$. The vertical translation is due to the requirement that $H_p$ tends to $0$ as we approach the focus-focus singularity $m$, which amounts to substracting $\mcA_0$ in the definition of the function $f$ of the proof of Theorem \ref{inv2}. 

Once we have a weighted polygon of the polygon invariant with the associated twisting index $k=0$, the twisting-index invariant is completely determined, i.e., we can calculate the index associated to all other weighted polygons of the polygon invariant. In Figure \ref{twistbig} some of them are displayed.



\bibliographystyle{alpha}
\bibliography{PhD}
\vspace{0.5cm}
\textbf{Jaume Alonso}\\
Department of Mathematics and Computer Science\\
University of Antwerp\\
Middelheimlaan 1\\
2020 Antwerp, Belgium\\
\textit{E-mail:} \texttt{jaume.alonsofernandez@uantwerpen.be}

\vspace{0.5cm}
\textbf{Holger R. Dullin}\\
School of Mathematics and Statistics\\
University of Sydney\\
Camperdown Campus\\
Sydney, NSW 2006, Australia \\
\textit{E-mail:} \texttt{holger.dullin@sydney.edu.au}

\vspace{0.5cm}
\textbf{Sonja Hohloch}\\
Department of Mathematics and Computer Science\\
University of Antwerp\\
Middelheimlaan 1\\
2020 Antwerp, Belgium\\
\textit{E-mail:} \texttt{sonja.hohloch@uantwerpen.be}

\end{document}